\newtheorem{theorem}{Theorem}
\newtheorem{lemma}[theorem]{Lemma}
\theoremstyle{definition}
\newtheorem{example}[theorem]{Example}
\newtheorem{remark}[theorem]{Remark}
\newtheorem{proposition}[theorem]{Proposition}
\theoremstyle{definition}
\numberwithin{equation}{section}
\newcommand{\Z}{\mathbb{Z}}
\newcommand{\N}{\mathbb{N}}
\newcommand{\C}{\mathbb{C}}
\newcommand{\Arg}{\mathrm{Arg}}
\newcommand{\Log}{\mathrm{Log}}
\title{Asymptotics for the twisted eta-product and applications to sign changes in partitions}
\author{Walter Bridges, Johann Franke, and Taylor Garnowski}
\address{University of Cologne, Department of Mathematics and Computer Science, Weyertal 86-90, 50931 Cologne, Germany}
\email{wbridges@uni-koeln.de}
\email{{jfrank12@uni-koeln.de}}
\email{{tgarnows@uni-koeln.de}}
\subjclass[2020]{11P82, 11P83}
\keywords{circle method, partitions, asymptotics, sign-changes, secondary terms}
\begin{document}
\maketitle
\begin{abstract}
    We prove asymptotic formulas for the complex coefficients of $(\zeta q;q)_\infty^{-1}$, where $\zeta$ is a root of unity, and apply our results to determine secondary terms in the asymptotics for $p(a,b,n)$, the number of integer partitions of $n$ with number of parts congruent $a$ modulo $b$.  Our results imply that, as $n \to \infty$, the difference $p(a_1,b,n)-p(a_2,b,n)$ for $a_1 \neq a_2$ oscillates like a cosine when renormalized by elementary functions.  Moreover, we give asymptotic formulas for arbitrary linear combinations of $\{p(a,b,n)\}_{1 \leq a \leq b}.$
\end{abstract}
\section{Introduction and statement of results}

Let $n$ be a positive integer.  A \textit{partition} of $n$ is a weakly decreasing sequence of positive integers that sum to $n$. The number of partitions of $n$ is denoted by $p(n)$. For example, one has $p(4) = 5$, since the relevant partitions are 
\begin{align*}
    4, \qquad 3+1, \qquad 2+2, \qquad 2+1+1, \qquad 1+1+1+1. 
\end{align*}
When $\lambda_1 + \cdots + \lambda_r = n$, the $\lambda_j$ are called the \textit{parts} of the partition $\lambda = (\lambda_1, ..., \lambda_r)$, and we write $\lambda \vdash n$ to denote that $\lambda$ is a partition of $n$. The partition function has no elementary closed formula, nor does it satisfy any finite order recurrence. However, when defining $p(0) := 1$, its generating function has the following product expansion
\begin{align}
\label{partitionproduct} \sum_{n \geq 0} p(n)q^n = \prod_{k \geq 1} \frac{1}{1-q^k} = \frac{q^{\frac{1}{24}}}{\eta(\tau)},
\end{align}
where as usual $q := e^{2\pi i \tau}$ and $\eta(\tau)$ denotes the Dedekind eta function. In \cite{hardyramanujan}, Hardy and Ramanujan used \eqref{partitionproduct} to show the asymptotic formula 
\begin{align}
\label{partitionasy} p(n) \sim \frac{1}{4\sqrt{3} n} \exp\left( \pi \sqrt{\frac{2n}{3}}\right), \qquad n \rightarrow \infty.
\end{align}

For their proof they introduced the circle method, certainly one of the most useful tools in modern analytic number theory. The now so-called Hardy--Ramanujan circle method uses modular type transformations to obtain a divergent asymptotic expansion whose truncations approximate $p(n)$ up to small errors. A later refinement by Rademacher \cite{Rademacher}, exploiting the modularity of $\eta(\tau)$, provides a convergent series for $p(n)$.

In this work we study the distribution of ordinary partitions whose number of parts are congruent to some residue class $a$ modulo $b$.\footnote{A similar sounding, but entirely different problem is to study the total number of parts that are all congruent to $a$ modulo $b$.  This problem was studied in detail by Beckwith--Mertens \cite{BeckwithMertens2,BeckwithMertens1} for ordinary partitions and recently by Craig \cite{Craig} for distinct parts partitions.} The number of such partitions $\lambda \vdash n$ is denoted by $p(a,b,n)$.\footnote{Note that the number of partitions $\lambda \vdash n$ with largest part $\ell(\lambda)$ equals the number of partitions with number of parts $\ell(\lambda)$ (see also \cite{andrewsbook}, Ch. 1), so the results of this paper can be reformulated taking number of parts into account instead.} It is well-known 
that the numbers $p(a,b,n)$ are asymptotically equidistributed; i.e., 
\begin{align} \label{pequa}
    p(a,b,n) \sim \frac{p(n)}{b}, \qquad n \to \infty. 
\end{align}
The proof of \eqref{pequa} begins by writing the generating function for $p(a,b,n)$ in terms of non-modular eta-products twisted with roots of unity modulo $b$ which is a direct consequence of the orthogonality of roots of unity:
\begin{align} \label{pgen}
    \sum_{n \geq 0} p(a,b,n)q^n = \frac{1}{b} \left( \frac{q^{\frac{1}{24}}}{\eta(\tau)} + \sum_{b-1 \geq j \geq 1}\frac{\zeta_b^{-ja}}{ \left(\zeta_b^jq; q\right)_\infty}\right),
\end{align}
where $(a;q)_\infty := \prod_{n \geq 0} (1-aq^n)$ denotes the usual $q$-Pochhammer symbol and $\zeta_b := e^{\frac{2\pi i}{b}}$. Since the first term does not depend on $a$ and dominates the other summands, \eqref{pequa} can be seen as a corollary of \eqref{pgen}. Similar results have also been proved for related statistics. For example, the rank of a partition is defined to be the largest part minus the number of parts; Males \cite{Males} showed that the Dyson rank function $N(a,b,n)$ (the number of partitions with rank congruent to $a$ modulo $b$) is asymptotically equidistributed. Males' proof uses Ingham's Tauberian theorem \cite{Ingham} and exploits the modularity of the generating function of $N(a,b,n)$, which is given in terms of twisted generalizations of one of Ramanujan's famous mock theta functions. 
In contrast, the twisted eta-products in \eqref{pgen} lack modularity.

If we now consider differences, $p(a_1,b,n)-p(a_2,b,n)$, the main terms in \eqref{pgen} cancel and the behavior must be determined by secondary terms. In the following example we look at the differences of two modulo 5 partition functions. 

\begin{example}
Consider the case $a_1 = 1$, $a_2 = 4$ and $b = 5$. The $q$-series of the differences has the following oscillating shape 
\begin{align*}
    \sum_{n \geq 0} \left(p(1,5,n) - p(4,5,n)\right)q^n  = \ &  q+q^2+q^3 -q^7-2 q^8 -3 q^9-4 q^{10} -4 q^{11}-5 q^{12} -6 q^{13}-7 q^{14} \\ 
    &-7 q^{15}-7 q^{16}  - \cdots + 2q^{22} + \cdots + 109q^{40} + \cdots + 11q^{48} - 24q^{49} \\ 
    \vspace{1cm}
    &- \cdots - 1998q^{75} - \cdots - 266q^{85} + 163q^{86} + \cdots + 40511 q^{120} + \cdots \\ 
    \vspace{1cm}
    &+ 3701 q^{133} -3587 q^{134} - \cdots .
\end{align*} 

\begin{figure}[H]
         \centering
         \includegraphics[width=120mm,height= 55mm]{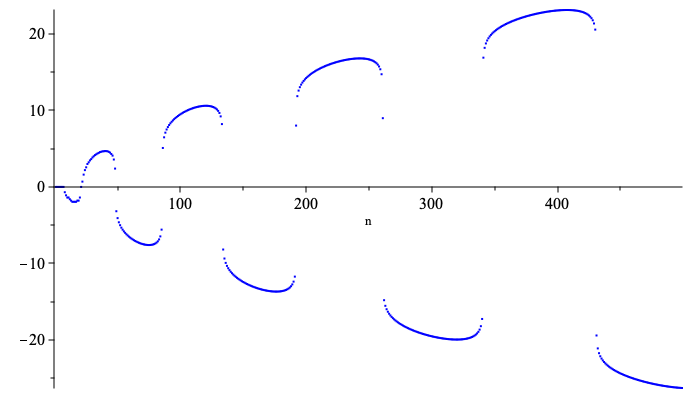}
        \caption{For $b= 5$, $a_1 = 1$, and $a_2=4$, we plot  the difference $p(1,5,n)-p(4,5,n)$ with log-scaling. The abrupt vertical changes in the graph indicate the location of the sign changes in the sequence. }
\label{fig:log145}
\end{figure}
\end{example}

Our first result predicts this oscillation as follows.  As usual, we define the dilogarithm for $|z|\leq 1$ by the generating series
$$
\mathrm{Li}_2(z):=\sum_{n \geq 1} \frac{z^n}{n^2}.
$$
Throughout we use the principle branch of the complex square root and logarithm.
\begin{theorem} \label{ThmA} Let $b \geq 5$ be an integer. Then for any two residue classes $a_1 \not\equiv a_2 \pmod{b}$, we have
\begin{align*}
    \frac{p(a_1,b,n) - p(a_2,b,n)}{B n^{-\frac34} \exp\left(2\lambda_1 \sqrt{n}\right)} = \cos\left(\beta + 2\lambda_2 \sqrt{n}\right) + o(1),
\end{align*}
as $n \to \infty$, where\footnote{Here and throughout we take the principal branch of the square-root and upcoming logarithms.} $\lambda_1 + i\lambda_2 := \sqrt{\mathrm{Li}_2(\zeta_b)}$ and $B > 0$ and $0 \leq \beta < 2\pi$ are implicitly defined by 
\begin{align*}
    Be^{i\beta} := \frac{1}{b}(\zeta_b^{-a_1} - \zeta_b^{-a_2}) \sqrt{\frac{(1 - \zeta_b)(\lambda_1 + i\lambda_2)}{\pi}}.
\end{align*}
\end{theorem}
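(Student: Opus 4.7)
The plan is to start from the generating function identity \eqref{pgen}, cancel the modular term in the difference, apply the paper's main asymptotic result for the coefficients of the twisted eta-products $(\zeta q;q)_\infty^{-1}$, and then identify which summand(s) dominate.

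First, subtracting two copies of \eqref{pgen} kills the summand $q^{1/24}/\eta(\tau)$, which is independent of $a$. We are left with
\begin{align*}
p(a_1,b,n)-p(a_2,b,n) = \frac{1}{b}\sum_{j=1}^{b-1}(\zeta_b^{-ja_1}-\zeta_b^{-ja_2})\,[q^n]\frac{1}{(\zeta_b^j q;q)_\infty},
\end{align*}
so that Theorem~\ref{ThmA} reduces to coefficient asymptotics for the twisted eta-products.

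Next, the main asymptotic for the coefficients of $(\zeta q;q)_\infty^{-1}$ (the theorem promised by the title) should give, for each root of unity $\zeta_b^j$, an expansion of the form
\begin{align*}
[q^n]\frac{1}{(\zeta_b^j q;q)_\infty} \sim \tfrac{1}{2}\sqrt{\frac{(1-\zeta_b^j)\sqrt{\mathrm{Li}_2(\zeta_b^j)}}{\pi}}\,n^{-3/4}\exp\!\left(2\sqrt{n\,\mathrm{Li}_2(\zeta_b^j)}\right),
\end{align*}
with principal branches of the square roots throughout. The exponential growth rate of the $j$-th summand is therefore $2\sqrt{n}\,\mathrm{Re}\sqrt{\mathrm{Li}_2(\zeta_b^j)}$.

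The crux is to pin down which $j$ dominate. Using $\mathrm{Re}\sqrt{w}=\sqrt{(|w|+\mathrm{Re}\,w)/2}$ together with the closed Fourier identity $\mathrm{Re}\,\mathrm{Li}_2(e^{2\pi i t}) = \pi^2 B_2(\{t\})$ and the analogous Clausen-type series for the imaginary part, I would prove that $t\mapsto\mathrm{Re}\sqrt{\mathrm{Li}_2(e^{2\pi i t})}$ is strictly unimodal on $(0,1)$ with minimum at $t=1/2$. Consequently, on the discrete set $\{j/b:1\le j\le b-1\}$ its maximum (for $b\geq 5$) is attained exactly at $j=1$ and $j=b-1$, and all remaining $j$ contribute at a strictly smaller exponential scale, falling into the $o(1)$ error once we normalize by $n^{-3/4}e^{2\lambda_1\sqrt{n}}$. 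I expect this unimodality comparison, together with careful bookkeeping of the branches of the dilogarithm and the square root, to be the main technical obstacle of the argument.

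Finally, since $p(a,b,n)$ is real, the $j$-th and $(b-j)$-th summands are complex conjugates of one another; in particular the $j=1$ and $j=b-1$ contributions combine to twice the real part of a single term. Writing $\sqrt{\mathrm{Li}_2(\zeta_b)}=\lambda_1+i\lambda_2$ and unpacking the definition of $Be^{i\beta}$ from the statement, this pair equals
\begin{align*}
2\,\mathrm{Re}\!\left(\tfrac{1}{2}Be^{i\beta}\,n^{-3/4}e^{2\lambda_1\sqrt{n}}e^{2i\lambda_2\sqrt{n}}\right) = Bn^{-3/4}e^{2\lambda_1\sqrt{n}}\cos(\beta+2\lambda_2\sqrt{n}).
\end{align*}
Dividing through by $Bn^{-3/4}e^{2\lambda_1\sqrt{n}}$ yields the claimed equivalence $\cos(\beta+2\lambda_2\sqrt{n}) + o(1)$.
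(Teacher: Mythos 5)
Your overall skeleton matches the paper's: decompose the difference via orthogonality into $\frac1b\sum_j(\zeta_b^{-ja_1}-\zeta_b^{-ja_2})Q_n(\zeta_b^j)$, feed in coefficient asymptotics for the twisted eta-products, observe that the $j$ and $b-j$ terms are conjugate so the dominant pair combines into the stated cosine. The final combination step and the constant $Be^{i\beta}$ are handled correctly.

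However, there is a genuine gap in your treatment of the subdominant terms. You assume that \emph{every} $Q_n(\zeta_b^j)$, $1\le j\le b-1$, satisfies the single asymptotic formula $Q_n(\zeta_b^j)\sim \frac{\sqrt{1-\zeta_b^j}\,\mathrm{Li}_2(\zeta_b^j)^{1/4}}{2\sqrt{\pi}}n^{-3/4}\exp\bigl(2\sqrt{\mathrm{Li}_2(\zeta_b^j)}\sqrt{n}\bigr)$, so that the exponential rate of the $j$-th term is $2\,\mathrm{Re}\sqrt{\mathrm{Li}_2(\zeta_b^j)}$, and you then compare these rates via unimodality of $\theta\mapsto\mathrm{Re}\sqrt{\mathrm{Li}_2(e^{i\theta})}$. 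This formula is false once $2\pi j/b$ passes the threshold $\theta_{13}\approx 2.067$: by Theorem \ref{T:BoyerGoh} the saddle point governing $Q_n(\zeta_b^j)$ then sits at a Farey fraction with denominator $2$ or $3$, and Theorem \ref{T:twistedetaproduct} (cases (2)--(4)) shows the true growth is $\exp\bigl(\frac{2}{k}\mathrm{Re}\sqrt{\mathrm{Li}_2(\zeta_b^{kj})}\sqrt{n}\bigr)$ with $k\in\{2,3\}$, which is strictly \emph{larger} than your claimed rate (e.g.\ for $\zeta_b^j$ near $-1$ your formula predicts essentially no exponential growth, whereas $Q_n(-1)\asymp n^{-3/4}\exp(\pi\sqrt{n/6})$). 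So the crux step --- ``all $j\ne 1,b-1$ fall into the error'' --- does not follow from your unimodality lemma applied to $\mathrm{Re}\sqrt{\mathrm{Li}_2(\zeta_b^j)}$ alone; you must compare against the corrected rates. The conclusion is still true, because for $k\ge 2$ one has $\frac{1}{k}\mathrm{Re}\sqrt{\mathrm{Li}_2(\zeta_b^{kj})}\le\frac{1}{k}\sqrt{\mathrm{Li}_2(1)}=\frac{\pi}{k\sqrt6}\le\frac{\pi}{2\sqrt6}\approx 0.641$, while the monotonicity of Lemma \ref{L:dilogdecreasing} gives $\lambda_1=\mathrm{Re}\sqrt{\mathrm{Li}_2(\zeta_b)}\ge\mathrm{Re}\sqrt{\mathrm{Li}_2(\zeta_5)}\approx 0.730$ for all $b\ge 5$; but this numerical comparison, resting on the full case analysis of Theorem \ref{T:twistedetaproduct}, is exactly the missing ingredient in your argument (and is why the paper restricts the clean statement to $b\ge5$ and treats $b\in\{2,3,4\}$ separately). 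Note also that the dominant terms $j=1,b-1$ do lie in case (1) since $2\pi/b\le 2\pi/5<\theta_{13}$, so that part of your argument is safe once the cases are sorted out; and the monotonicity statement you propose to reprove is available as Lemma \ref{L:dilogdecreasing} (Boyer--Parry).
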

A more general version of Theorem \ref{ThmA} holds for values $b \geq 2$, see Theorem \ref{T:simpledifferences}, where the special cases $b \in \{2,3,4\}$ have to be treated slightly differently.  Figure 2 shows that this prediction is surprisingly accurate even for small $n$.
\begin{figure}[H]
         \centering
         \includegraphics[width=120mm,height= 70mm]{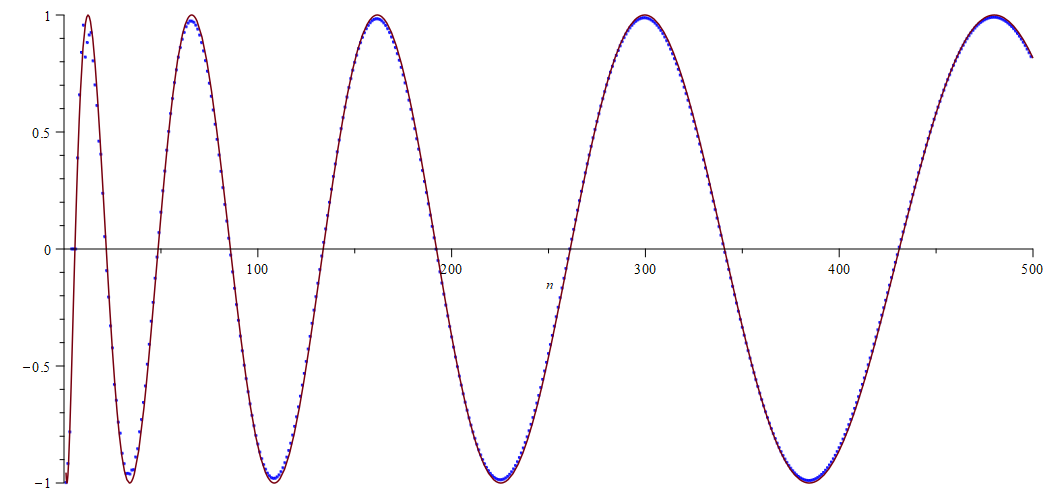}
\label{fig:difffigintro}

\caption{The plot shows the sign changes of $p(1,5,n)-p(4,5,n)$. The blue dots depict the function $\frac{p(1,5,n)-p(4,5,n)}{Bn^{-\frac34}e^{2\lambda_1\sqrt{n}}}$ and the red line is the asymptotic prediction  $\cos\left(\beta + 2\lambda_2 \sqrt{n}\right)$. The approximate values of the constants are $B \approx 0.23268$, $\beta \approx 1.4758$, $\lambda_1 \approx 0.72984$, and $\lambda_2\approx 0.68327$.}
\end{figure}

 The proof makes use of \eqref{pgen} and a detailed study of the coefficients $\sum_{n \geq 0} Q_n(\zeta)q^n := (\zeta q; q)^{-1}_\infty$ when $\zeta$ is a root of unity.  Prompted by a question of Stanley, a study of the polynomials $Q_n(\zeta)$ and their complex zeroes was undertaken in a series of papers by Boyer, Goh, Keith and Parry (see \cite{Boyergoh,Boyergoh2,Boyerkeith,Boyerparry,Parry}), and the functions $(\zeta q; q)^{-1}_\infty$ have also been studied in recent work of Bringmann, Craig, Males, and Ono \cite{BrigmannOnoMales} in the context of distribution of homology of Hilbert schemes and $t$-hook lengths.  Asymptotics for $Q_n(\zeta)$ were studied by Wright \cite{Wright} when $\zeta$ is any positive real number, and then by  Boyer and Goh \cite{Boyergoh,Boyergoh2} for $|\zeta|\neq 1$.  Our results essentially complete this study, proving asymptotics when $\zeta$ is any root of unity.  
 
 We use the circle method in the form of Parry \cite{Parry} as a template; however, significant technical effort is required to bound minor arcs that one does not encounter when $|\zeta|\neq 1$.  For example, we have to overcome the fact that when $\zeta$ is a root of unity, the series representation of the polylogarithm $\mathrm{Li}_s(\zeta)$ does not converge for $\mathrm{Re}(s) < 0$.

 Asymptotics in the case that $b \geq 5$ are as follows, and a general version with the sporadic cases $b \in \{2,3,4\}$ is stated in Theorem \ref{T:twistedetaproduct}.
\begin{theorem} \label{ThmB} Let $b \geq 5$. Then we have
\begin{align*}
    Q_n\left(\zeta_b\right) \sim \frac{\sqrt{1-\zeta_b}\mathrm{Li}_2\left(\zeta_b\right)^{\frac14}}{2\sqrt{\pi}n^{\frac34}}\exp\left( 2\sqrt{\mathrm{Li}_2\left(\zeta_b\right)}\sqrt{n}\right), \qquad n \to \infty.
\end{align*}
\end{theorem}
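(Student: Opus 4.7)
The plan is to apply the Hardy--Ramanujan/Wright saddle point method, using the template of Parry~\cite{Parry}. Cauchy's theorem gives
\begin{align*}
Q_n(\zeta_b) \;=\; \frac{1}{2\pi i}\oint_{|q|=r}\frac{F(q)}{q^{n+1}}\,dq, \qquad F(q) := (\zeta_b q;q)_\infty^{-1},
\end{align*}
for any $0 < r < 1$, and under the substitution $q = e^{-\tau}$ this becomes an integral over a vertical segment of length $2\pi$ in the $\tau$-plane. The phase $\phi(\tau) := \log F(e^{-\tau}) + n\tau$ has a complex saddle at $\tau_n := \sqrt{\mathrm{Li}_2(\zeta_b)/n} = (\lambda_1 + i\lambda_2)/\sqrt{n}$ (principal branch). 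I would choose $r = e^{-\lambda_1/\sqrt{n}}$ so that the contour $\tau = \lambda_1/\sqrt{n} + i\theta$, $\theta \in [-\pi,\pi]$, passes through the saddle at $\theta = \lambda_2/\sqrt{n}$, and split it into a major arc of half-width $\delta = n^{-3/4+\varepsilon}$ around the saddle and a minor arc on the complement.

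The key input for the major arc is the asymptotic expansion of $\log F(e^{-\tau})$ as $\tau \to 0$. Expanding $-\log(1-\zeta_b q^k)$ as a double series and summing in $k$ yields $\log F(e^{-\tau}) = \sum_{m \geq 1} \frac{\zeta_b^m}{m(e^{m\tau}-1)}$, and using that the Mellin transform of $1/(e^x-1)$ is $\Gamma(s)\zeta(s)$, Mellin--Barnes inversion gives
\begin{align*}
\log F(e^{-\tau}) \;=\; \frac{1}{2\pi i}\int_{(c)}\Gamma(s)\zeta(s)\mathrm{Li}_{s+1}(\zeta_b)\tau^{-s}\,ds
\end{align*}
for $c > 1$. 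Shifting the contour past the simple poles at $s=1$ (from $\zeta$) and $s=0$ (from $\Gamma$) produces
\begin{align*}
\log F(e^{-\tau}) \;=\; \frac{\mathrm{Li}_2(\zeta_b)}{\tau} + \tfrac{1}{2}\log(1-\zeta_b) + O(|\tau|),
\end{align*}
uniformly as $\tau \to 0$ in a suitable sector around the positive real axis. Substituting this into the major arc integral and Taylor-expanding the phase to second order about $\tau_n$, the saddle value $\phi(\tau_n) = 2\sqrt{n\,\mathrm{Li}_2(\zeta_b)} + \tfrac{1}{2}\log(1-\zeta_b)$ combined with $\phi''(\tau_n) = 2n^{3/2}/\sqrt{\mathrm{Li}_2(\zeta_b)}$ produces, through a standard Gaussian integration in $\theta$, exactly the claimed main term.

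The principal obstacle is the minor arc bound. In the case $|\zeta|\ne 1$ treated in~\cite{Boyergoh,Parry}, $F$ is analytic on the closed unit disk and crude estimates immediately beat the main term. For $\zeta = \zeta_b$, however, $F$ has singularities at every $q$ with $\zeta_b q^k = 1$, and one must show that $\mathrm{Re}(\phi(\tau))$ is strictly smaller than $\mathrm{Re}(\phi(\tau_n))$ on the minor arc by an amount of order $\sqrt{n}$. The sticking point flagged in the introduction is that the Mellin shift above does not extend off the positive real axis as simply as one would like because $\mathrm{Li}_s(\zeta_b)$ is not given by a convergent series for $\mathrm{Re}(s)\leq 0$. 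I would work around this via the Hurwitz decomposition
\begin{align*}
\mathrm{Li}_s(\zeta_b) \;=\; b^{-s}\sum_{a=1}^{b-1}\zeta_b^a\,\zeta(s, a/b),
\end{align*}
which extends $\mathrm{Li}_s(\zeta_b)$ meromorphically and, via the Hurwitz functional equation, provides polynomial bounds on vertical lines. Combining these with direct Riemann-sum estimates on the parts of the minor arc that approach the other singularities should yield the needed uniform bound $\mathrm{Re}(\phi(\tau)) \leq \mathrm{Re}(\phi(\tau_n)) - c\sqrt{n}$ on the minor arc, making its contribution $O(e^{-c\sqrt{n}})$ times the main term, as required.
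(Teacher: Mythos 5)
Your contour set-up and major-arc computation are essentially correct and match the paper's: the saddle $\tau_n=\sqrt{\mathrm{Li}_2(\zeta_b)/n}$, the constant $\tfrac12\Log(1-\zeta_b)$ from the pole at $s=0$, and the Gaussian integration reproduce exactly the stated main term (the paper organizes this via a Farey dissection in Parry's style, with the $k=1$ arc as the major arc, Lemma \ref{L:Efixedkbound} supplying the constant and Theorem \ref{laplacemethod} the Gaussian step, but the computation is the same). One small correction: your Hurwitz decomposition omits the class $a\equiv 0\pmod b$; it should read $\mathrm{Li}_s(\zeta_b)=b^{-s}\bigl(\zeta(s)+\sum_{a=1}^{b-1}\zeta_b^{a}\zeta\left(s,\tfrac{a}{b}\right)\bigr)$.

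The genuine gap is the minor arc, and the tools you propose do not address the actual difficulty. First, the Mellin--Barnes expansion (however $\mathrm{Li}_{s+1}(\zeta_b)$ is continued) is only valid for $\tau\to 0$ in a fixed cone $|\Arg(\tau)|\leq\tfrac{\pi}{2}-\eta$: the factor $|\tau^{-s}|$ grows like $e^{|\mathrm{Im}(s)|\,|\Arg(\tau)|}$ while $\Gamma(s)$ only provides decay $e^{-\frac{\pi}{2}|\mathrm{Im}(s)|}$, so polynomial bounds for the Hurwitz zetas on vertical lines do not rescue the representation on the bulk of your minor arc, where $\Arg(\tau)\to\pm\tfrac{\pi}{2}$; there the expansion $\mathrm{Li}_2(\zeta_b)/\tau+\dots$ is simply wrong. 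Second, on the circle $|q|=e^{-\lambda_1/\sqrt n}$ the function $(\zeta_b q;q)_\infty^{-1}$ has peaks of genuinely exponential size near \emph{every} rational angle: near $\theta=\tfrac{h}{k}$ the local model is $\exp\bigl(\mathrm{Li}_2(\zeta_b^{k})/(k^2 t_\theta)\bigr)$, contributing up to roughly $\exp\bigl(2\sqrt n\,\mathrm{Re}\bigl(\sqrt{\mathrm{Li}_2(\zeta_b^{k})}\bigr)/k\bigr)$. Showing all of these lose to the $k=1$ peak is not a soft estimate; it is the dilogarithm comparison of Boyer--Goh/Boyer--Parry (Theorem \ref{T:BoyerGoh}, Lemma \ref{L:dilogdecreasing}) combined with Lemma \ref{L:exponentialquotientmax}, and it is exactly where the hypothesis on $b$ enters --- for $b=3$ the $k=3$ arcs win and the asymptotic takes a different shape (Theorem \ref{T:twistedetaproduct}, case (4)) --- yet your argument never sees this ingredient. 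Third, even granting the comparison of exponents, one must control the discrepancy between $\log F$ and its local model uniformly at all rationals with denominator up to order $\sqrt n$; because the twisted coefficients are not nonnegative and $\mathrm{Li}_s(\zeta_b)$ has no convergent series for $\mathrm{Re}(s)\leq 0$, this cannot be done termwise and is the technical core of the paper (Lemmas \ref{L:Esmallkbound} and \ref{L:Elargekbound}, proved in Section 5 via uniform Euler--Maclaurin, Abel summation, and the bounds on $G_m$ in Lemmas \ref{L:CosSumBound} and \ref{L:Gmaxbound}). Your phrase ``direct Riemann-sum estimates on the parts of the minor arc that approach the other singularities'' names this problem rather than solving it, so the minor-arc bound --- precisely the part the paper identifies as the principal difficulty --- is missing from the proposal.
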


 \begin{remark} Our techniques for bounding minor arcs seem not to readily apply when $|\zeta|=1$ and $\zeta$ is not a root of unity, so we leave it as an open problem to prove asymptotic formulas for $Q_n(\zeta)$ in this case.  Boyer and Goh \cite{Boyergoh} prove that the unit circle is part of the {\it zero attractor} of the polynomials $Q_n(\zeta)$, and it follows from their results that asymptotic formulas for $Q_n(\zeta)$ cannot be uniform on any subset of $\mathbb{C}$ which is an open neighborhood of an arc of the unit circle.
 
 Furthermore, note that taking $\zeta \to 1$ in Theorem \ref{ThmB} does not recover the Hardy--Ramanujan asymptotic formula \eqref{partitionasy}.
 \end{remark}

The above results can be further developed to linear combinations of $p(a,b,n)$ (where $b$ is fixed). Each relevant combination $\sum_{0 \leq a < b} c_a p(a,b,n)$ corresponds to a polynomial \newline $P(x) := \sum_{0 \leq j < b} c_a x^a$ via $p(a,b,n) \mapsto x^a$. Perhaps the most combinatorially interesting cases are $c_a \in \{0, \pm1 \}$; i.e., differences of partition numbers. This means that for any two nonempty disjoint subsets $S_1, S_2 \subset [0, b-1]$ of integers, we consider the differences 
\begin{align*}
    \sum_{a \in S_1} p(a,b,n) - \sum_{a \in S_2} p(a,b,n).
\end{align*}

The asymptotic behavior of these differences is described in Theorem \ref{T:differenceswithsets}. When choosing the coefficients $c_a$ properly we can reduce the growth of the difference terms by canceling main terms. By doing so, we see that actually all formulas of Theorem \ref{ThmB} are required (as well as the sporadic cases described in Theorem \ref{T:twistedetaproduct}). In contrast, one can deduce formula \eqref{pequa} by only elementary means without a thorough analysis of the coefficients $Q_n(\zeta)$.

Moreover, for families of partition numbers, one can consider growth and sign changes simultaneously.  Of special interest here is the fact that any shift of $(S_1, S_2)$ to $(S_1+r,S_2+r)$ changes the ``phase" but not the ``amplitude" of the asymptotic terms. The following example demonstrates this fact.

\begin{example}\label{Ex:mod12combs} Let $b = 12$. For the sets $S_1 := \{1,2,5\}$ and $S_2 := \{0,3,4\}$, we find \newline $P_{S_1, S_2}(x) = x^5-x^4-x^3+x^2+x-1= (x-1)\Phi_{12}(x)$, where $\Phi_{12}(x)$ is the 12th cyclotomic polynomial. The corresponding difference of partition numbers is
\begin{align*}
    p(5,12,n) - p(4,12,n) - p(3,12,n) + p(2,12,n) + p(1,12,n) - p(0,12,n).
\end{align*}
By shifting the sets with integers $0 \leq r \leq 6$, we find that, more generally, all differences 
\begin{align*}
    \Delta_r(n) & := p(5+r,12,n) - p(4+r,12,n) - p(3+r,12,n)\\&\hspace{4cm}  + p(2+r,12,n) + p(1+r,12,n) - p(r,12,n)
\end{align*}
have the same growth in their amplitudes but have different phases of sign changes. Since $ P_{S_1, S_2}(x)$ has zeros $\{1, \pm \zeta_{12}, \pm \zeta^5_{12}\}$, the dominating term in the asymptotic expansion is induced by the root of unity $\zeta_6$. In fact, we obtain 
\begin{align} \label{Deltar}
    \frac{\Delta_r(n)}{An^{-\frac34}\exp\left(2\lambda_1 \sqrt{n}\right)} = \cos\left(\alpha - \frac{2\pi r}{6} + 2\lambda_2 \sqrt{n} \right) + o(1),
\end{align}
with $\lambda_1 + i\lambda_2 = \sqrt{\mathrm{Li}_2(\zeta_6)}$, where $A > 0$ and $\alpha \in [0,2\pi)$ are defined by 
$$Ae^{i\alpha} = \frac{\left(1 - i\sqrt{3}\right)^{\frac32}}{12\sqrt{2\pi}} \mathrm{Li}_2(\zeta_6)^{\frac{1}{4}}.$$
Note that $A$, $\alpha$, $\lambda_1$ and $\lambda_2$ all do not depend on the choice of $r$.  One can then asymptotically describe the regions of $n$, where 

$$
    p(5,12,n) + p(2,12,n) + p(1,12,n) > p(4,12,n) + p(3,12,n) + p(0,12,n), 
$$
and vice versa, using formula \eqref{Deltar}. Additionally, since $\mathrm{Re}(\sqrt{\mathrm{Li}_2(\zeta_6)}) < \mathrm{Re}(\sqrt{\mathrm{Li}_2(\zeta_{12})})$, we note with the help of Theorem \ref{ThmA} that
\begin{align*}
    \limsup_{n \to \infty} \left| \frac{p(a_1, 12, n) - p(a_2, 12, n)}{n^{-\frac{3}{4}}\exp\left(2\mathrm{Re}(\mathrm{Li}_2(\zeta_6))\sqrt{n}\right)}\right| = \infty.
\end{align*}
This implies some form of cancellation within the higher differences $\Delta_r(n)$, that ``exceeds" that of simple differences modulo $12$.  \end{example} 

In contrast to Example \ref{Ex:mod12combs}, when $b$ is prime, one cannot decrease the order of growth below simple differences using any rational combination of the $p(a,b,n)$.  This is shown in Section 3.

The paper is organized as follows. In Section 2, we collect some classic analytical tools and prove some key lemmas. This includes a careful study of the dilogarithm function $\mathrm{Li}_2(z)$ for values $|z|=1$. In Section 3, we state our main result, Theorem \ref{T:twistedetaproduct}, and applications.  We prove Theorem \ref{T:twistedetaproduct} in Sections 4 and 5 using the circle method.  Section 5 deals with the primary difficulty of bounding the minor arcs.

\section*{Acknowledgements}
We would like to thank Kathrin Bringmann, Joshua Males, Caner Nazaroglu, Ken Ono and Wadim Zudilin for useful discussions and for making comments on an earlier version of this paper.  We are grateful to the referees for their detailed comments that improved the exposition.

The first author is partially supported by the SFB/TRR 191 ``Symplectic Structures in Geometry, Algebra and Dynamics'', funded by the DFG (Projektnummer 281071066 TRR 191).

The second author is partially supported by the Alfried Krupp prize. 

\section{Preliminaries} 

We will need several analytical results in this work, which we collect in this section. Much of the items here are discussed in works such as \cite{Bringmannbook,BMJS,CaTir,IwanKow,Pinsky,Asymptbook}. The reader can skip this section and refer back to it as needed as we work through the proofs of our main theorems. 

\subsection{Classical asymptotic analysis and integration formulas}
A first tool is the well known Laplace's method for studying limits of definite integrals with oscillation, which we will use for evaluating Cauchy-type integrals.
\begin{theorem}[Laplace's method, see Section 1.1.5 of \cite{Pinsky}]\label{laplacemethod}
   Let $A,B: [a,b]\to \mathbb{C}$ be continuous functions. Suppose $x\neq x_0 \in [a,b]$ such that $\mathrm{Re}(B(x))<\mathrm{Re}(B(x_0)),$ and that 
   \begin{align*}
       \lim_{x\to x_0}\frac{B(x)-B(x_0)}{(x-x_0)^2} = -k\in \mathbb{C},
   \end{align*}
   with $\mathrm{Re}(k)>0.$ Then as $t \to \infty$
   \begin{align*}
       \int^b_{a}A(x)e^{tB(x)}dx = e^{tB(x_0)}\left(A(x_0)\sqrt{\frac{\pi}{tk}}+o\left(\frac{1}{\sqrt{t}}\right)\right).
   \end{align*}
\end{theorem}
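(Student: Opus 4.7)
My plan is the standard Laplace/saddle-point argument.  First I would factor the common exponential $e^{tB(x_0)}$ out of the integral, reducing the claim to showing
\begin{align*}
J(t) := \int_a^b A(x)e^{t(B(x)-B(x_0))}\,dx = A(x_0)\sqrt{\frac{\pi}{tk}} + o\!\left(\frac{1}{\sqrt{t}}\right).
\end{align*}
I will assume $x_0$ lies in the interior of $[a,b]$, which is the relevant case for the paper (the endpoint case merely changes the answer by a factor of $2$).  The idea is to split the integral at a small symmetric window $(x_0-\delta,x_0+\delta)$: outside this window I expect an exponentially small contribution in $t$, and inside I would replace the integrand by a Gaussian in $x-x_0$.

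\textbf{Minor region.}  For any fixed $\delta>0$, the strict inequality $\mathrm{Re}(B(x))<\mathrm{Re}(B(x_0))$ together with continuity and compactness of $[a,b]\setminus(x_0-\delta,x_0+\delta)$ produces some $\eta=\eta(\delta)>0$ with $\mathrm{Re}(B(x)-B(x_0))\leq -\eta$ on this set.  Boundedness of $A$ then gives an $O(e^{-t\eta})$ contribution to $J(t)$, which is visibly $o(t^{-1/2})$.

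\textbf{Main contribution.}  Using the quadratic hypothesis I would write $B(x)-B(x_0) = -k(x-x_0)^2\bigl(1+r(x)\bigr)$ with $r(x)\to 0$ as $x\to x_0$, and shrink $\delta$ to arrange $|r(x)|\leq \tfrac12$ on $|x-x_0|\leq \delta$.  This ensures $\mathrm{Re}[-k(1+r(x))]\leq -\tfrac12\mathrm{Re}(k)<0$ throughout the window.  Rescaling via $u=\sqrt{t}\,(x-x_0)$ transforms the contribution from $[x_0-\delta,x_0+\delta]$ into
\begin{align*}
\frac{1}{\sqrt{t}}\int_{|u|\leq \delta\sqrt{t}} A\!\left(x_0+\tfrac{u}{\sqrt{t}}\right)\exp\!\left(-ku^2\bigl(1+r\bigl(x_0+\tfrac{u}{\sqrt{t}}\bigr)\bigr)\right)du.
\end{align*}
As $t\to\infty$ the integrand converges pointwise in $u$ to $A(x_0)e^{-ku^2}$ and is dominated by the integrable majorant $\|A\|_\infty e^{-\tfrac12 \mathrm{Re}(k)\,u^2}$, so dominated convergence gives $\sqrt{t}\,J(t)\to A(x_0)\int_{-\infty}^{\infty} e^{-ku^2}\,du = A(x_0)\sqrt{\pi/k}$, where the complex Gaussian is evaluated by contour rotation (valid because $\mathrm{Re}(k)>0$ and we take the principal branch).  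The main obstacle I anticipate is the uniform bookkeeping near $x_0$: $\delta$ must be small enough that the quadratic expansion and the perturbation $r(x)$ are well controlled, yet independent of $t$ so the dominated-convergence majorant exists.  Since $B$ is complex-valued one cannot compare directly with a real Gaussian, and the uniform lower bound on $\mathrm{Re}(k(1+r(x)))$ afforded by $\mathrm{Re}(k)>0$ is exactly what makes the argument go through.
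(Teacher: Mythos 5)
The paper does not actually prove this statement---it is quoted as a known result from Pinsky's book---so there is no internal proof to compare against. Your argument is the standard Laplace-method proof, and its overall structure is sound: factor out $e^{tB(x_0)}$, get exponential decay off a $\delta$-window from the strict maximum hypothesis plus compactness, then rescale $u=\sqrt{t}\,(x-x_0)$ and pass to the limit by dominated convergence to obtain the complex Gaussian $A(x_0)\sqrt{\pi/k}$ (evaluated by contour rotation, legitimate since $\mathrm{Re}(k)>0$). Your explicit restriction to interior $x_0$ is also appropriate, since in the paper's application the maximum sits at $\theta=0$ inside the window $[-c,c]$.

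One quantitative step needs repair. You claim that choosing $\delta$ so that $|r(x)|\le\tfrac12$ forces $\mathrm{Re}\bigl[-k(1+r(x))\bigr]\le-\tfrac12\mathrm{Re}(k)$. But $\mathrm{Re}\bigl[k(1+r)\bigr]=\mathrm{Re}(k)+\mathrm{Re}(kr)\ge\mathrm{Re}(k)-|k|\,|r|$, so the bound $|r|\le\tfrac12$ only yields $\mathrm{Re}(k)-|k|/2$, which can be negative whenever $|k|>\mathrm{Re}(k)$---and the genuinely complex case (e.g.\ $k=4\pi^2/(\lambda_1+i\lambda_2)$) is exactly what the paper needs. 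The fix is immediate: since $r(x)\to0$, choose $\delta$ so that $|r(x)|\le\mathrm{Re}(k)/(2|k|)$ on the window; then $\mathrm{Re}\bigl[k(1+r(x))\bigr]\ge\tfrac12\mathrm{Re}(k)$, your majorant $\|A\|_\infty e^{-\frac12\mathrm{Re}(k)u^2}$ is valid uniformly in $t$, and the rest of the dominated-convergence argument goes through exactly as you wrote it.
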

We are ultimately interested in how the coefficients of a series $S(q):= \sum_{n\geq 0}a(n)q^n$ grow as $n\to \infty$. The classical Euler--Maclaurin summation formulas can be applied in many cases to link the growth of $a(n)$ to the growth of $S(q)$ as $q$ approaches the unit circle. We state the Euler--Maclaurin summation formulas in two different forms. 

\begin{theorem} [classical Euler--Maclaurin summation, see p. 66 of \cite{IwanKow}]\label{T:EulerMac}
    Let $\{x\}:=x - \lfloor x \rfloor$ denote the fractional part of $x$. For $N \in \mathbb{N}$ and $f:[1, \infty) \to \mathbb{C}$ a continuously differentiable function, we have
    $$
    \sum_{1 \leq n \leq N} f(n)=\int_1^N f(x)dx+\frac{1}{2}(f(N)+f(1))+\int_1^N f'(x)\left(\{x\}-\frac{1}{2}\right)dx.
    $$
\end{theorem}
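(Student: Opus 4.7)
The plan is to prove this classical formula by a local-to-global argument: I would establish it on each unit interval $[n,n+1]$ via integration by parts, then telescope. The whole identity is driven by a single clever choice of antiderivative on each subinterval, and that choice is precisely what forces the mysterious shift by $\tfrac12$ in the integrand $f'(x)(\{x\}-\tfrac12)$.

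First I would fix $n \in \{1,2,\ldots,N-1\}$ and note that on $[n,n+1)$ we have $\{x\}=x-n$, so $\{x\}-\tfrac12 = x-n-\tfrac12$. Applying integration by parts with $u=x-n-\tfrac12$ and $dv = f'(x)\,dx$ (so $du=dx$ and $v=f(x)$) on this interval gives
\begin{align*}
\int_n^{n+1} f'(x)\left(\{x\}-\tfrac12\right)dx &= \left[f(x)\left(x-n-\tfrac12\right)\right]_n^{n+1} - \int_n^{n+1} f(x)\,dx \\
&= \tfrac12\bigl(f(n+1)+f(n)\bigr) - \int_n^{n+1} f(x)\,dx.
\end{align*}
The symmetric boundary contribution $\tfrac12(f(n+1)+f(n))$ is precisely what the centered weight $x-n-\tfrac12$ was designed to produce; shifting by any other constant would leave an asymmetric term.

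Next I would sum this identity over $n=1,2,\ldots,N-1$. The integrals on the left combine into $\int_1^N f'(x)(\{x\}-\tfrac12)dx$ because the integrand is continuous from the right at each integer and the (measure-zero) issue at integer points does not affect the integral. The integrals of $f$ on the right telescope into $\int_1^N f(x)dx$. Finally, the boundary sum becomes
\begin{align*}
\sum_{n=1}^{N-1}\tfrac12\bigl(f(n+1)+f(n)\bigr) = \sum_{n=1}^{N} f(n) - \tfrac12\bigl(f(1)+f(N)\bigr),
\end{align*}
since each interior value $f(n)$ for $2 \leq n \leq N-1$ appears twice (once from each adjacent interval) while $f(1)$ and $f(N)$ appear only once. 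Rearranging produces the claimed formula.

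There is no real obstacle here; the only subtle point is justifying the treatment of $\{x\}-\tfrac12$ at integers when writing the sum of piecewise integrals as a single integral over $[1,N]$, which is harmless because $f'$ is continuous and the endpoints have measure zero. I would remark that one could equivalently derive the identity by writing $\sum_{n=1}^N f(n) = \int_1^N f(x)d\lfloor x\rfloor + f(1)$ and performing Stieltjes integration by parts against $x - \lfloor x\rfloor - \tfrac12$, but the piecewise approach above is the most self-contained.
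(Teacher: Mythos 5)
Your proof is correct: the integration by parts on each interval $[n,n+1)$, the telescoping of the $\int f$ terms, and the count of boundary weights (interior $f(n)$ appearing twice with weight $\tfrac12$, the endpoints once) all check out, and the measure-zero issue at integer points is handled appropriately; the only trivial omission is the degenerate case $N=1$, where the identity holds vacuously. Note that the paper does not prove this statement at all --- it cites it as a classical result from Iwaniec--Kowalski --- and your argument is essentially the standard textbook proof (equivalent to the Riemann--Stieltjes partial summation you mention in your closing remark), so there is nothing to reconcile with the paper's treatment.
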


We use a second formulation, due to Bringmann--Mahlburg--Jennings-Shaffer, to study the asymptotics of series with a complex variable approaching 0 within a fixed cone in the right-half plane.  It is not stated in \cite{BMJS}, but one can conclude from the proof that Theorem \ref{T:BMJS} is uniform in $0 \leq a \leq 1.$
  \begin{theorem}[uniform complex Euler--Maclaurin summation, \cite{BMJS}, Theorem 1.2] \label{T:BMJS}
      Suppose $0\leq \theta < \frac{\pi}{2}$ and suppose that $f:\mathbb{C}\to \mathbb{C}$ is holomorphic in a domain containing $\{re^{i\alpha}: |\alpha| \leq \theta\}$ with derivatives of sufficient decay; i.e., there is an $\varepsilon >0$ such that for all $m$ $f^{(m)}(z) \ll z^{-1-\varepsilon}$ as $z \to \infty$.  Then uniformly for $a \in [0,1]$, we have
      $$
      \sum_{\ell \geq 0} f(w(\ell+a))=\frac{1}{w}\int_0^{\infty} f(w)dw -\sum_{n=0}^{N-1} \frac{f^{(n)}(0)B_{n+1}(a)}{(n+1)!}w^n+O_N\left(w^{N}\right),
      $$
      uniformly as $w \to 0$ in $\Arg(w)\leq \theta$.
  \end{theorem}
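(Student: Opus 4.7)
The plan is to reduce to the classical real-variable Euler--Maclaurin formula (Theorem 2.1, iterated) by contour deformation, then rearrange the resulting boundary expansion at $0$ into the claimed $B_{n+1}(a)$-form by Taylor expansion and a standard Bernoulli identity. The key idea is that along any ray inside the open cone of holomorphy of $f$, the real-variable Euler--Maclaurin machinery applies verbatim; the complex statement is then a cosmetic change of variables.

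First I would fix $w$ with $|\Arg(w)|\leq\theta$, set $u:=w/|w|$, and consider the auxiliary function $h(t):=f(tu)$ for $t\geq 0$. Because $f$ is holomorphic in an open neighborhood of the closed cone $\{|\Arg(z)|\leq\theta\}$ and all derivatives decay like $z^{-1-\varepsilon}$ at infinity, Cauchy's theorem applied to large circular sectors allows one to deform every line integral of $f$ along a ray in the cone back to the positive real axis. Applying the iterated classical Euler--Maclaurin formula (obtained by bootstrapping Theorem 2.1 via integration by parts against periodic Bernoulli polynomials) to the partial sum $\sum_{\ell=0}^{L}h(|w|(\ell+a))$ and passing to $L\to\infty$ using the decay of $h^{(m)}$ produces
\begin{align*}
\sum_{\ell\geq 0} f(w(\ell+a))=\frac{1}{|w|}\int_{|w|a}^{\infty}h(t)\,dt+\sum_{k=1}^{N}\frac{B_k}{k!}|w|^{k-1}h^{(k-1)}(|w|a)+R_N,
\end{align*}
with remainder $R_N\ll |w|^{N}$ coming from an integral of $h^{(N)}$ against a periodic Bernoulli polynomial.

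Next I would convert $h$ back to $f$ along the ray $\mathbb{R}_{\geq 0}u$, using $h^{(k-1)}(|w|a)=u^{k-1}f^{(k-1)}(wa)$ and $\int_{|w|a}^{\infty}h(t)\,dt=\int_{wa}^{\infty}f(z)\,dz$ along the ray through $w$, and then splitting the latter as $\int_{0}^{\infty}f(x)\,dx-\int_{0}^{wa}f(z)\,dz$ via Cauchy's theorem plus decay. Taylor-expanding $f$ near $0$ gives $\int_{0}^{wa}f(z)\,dz=\sum_{n=0}^{N-1}\tfrac{f^{(n)}(0)\,a^{n+1}}{(n+1)!}w^{n+1}+O(w^{N+1})$, while each boundary term expands as $f^{(k-1)}(wa)=\sum_{j\geq 0}\tfrac{f^{(k-1+j)}(0)(wa)^{j}}{j!}+O(w^{N-k+1})$. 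Substituting and regrouping by powers of $w$, the coefficient of $f^{(n)}(0)w^{n}$ collects all pairs with $k+j=n+1$ (plus the integral correction at $k=0$), producing $\tfrac{1}{(n+1)!}\sum_{k=0}^{n+1}\binom{n+1}{k}B_k\,a^{n+1-k}=\tfrac{B_{n+1}(a)}{(n+1)!}$ by the standard identity, which yields the asserted expansion with the correct sign.

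Uniformity in $a\in[0,1]$ and in $\Arg(w)$ follows because every implicit constant depends only on $N$, $\theta$, and the uniform bound $\sup_{|\Arg(z)|\leq\theta}|z|^{1+\varepsilon}|f^{(N)}(z)|$, while $a$ enters only through bounded polynomials and through $a^{j}$ with $a\in[0,1]$. The main obstacle is the bookkeeping in the third paragraph: keeping the signs straight and aligning the boundary and integral expansions so that the Bernoulli polynomial identity applies cleanly, while simultaneously verifying that all contour deformations are legitimate and the remainders behave uniformly over the whole closed cone. This last uniformity is precisely what makes the result usable in Theorems \ref{ThmA} and \ref{ThmB}, where $w$ varies along Farey arcs and one must sum contributions without losing control in the angular direction.
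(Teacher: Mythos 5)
The paper itself gives no proof of Theorem \ref{T:BMJS}: it is imported verbatim from \cite{BMJS} (Theorem 1.2), with only the remark that the uniformity in $a$ can be read off from the proof there, so there is no internal argument to compare against. Your route---restrict to the ray through $w$, apply the classical Euler--Maclaurin formula (Theorem \ref{T:EulerMac}, iterated) to $h(t)=f(tu)$ with step $|w|$, rotate the tail integral back to the positive real axis by Cauchy's theorem and the decay hypothesis, and regroup the Taylor expansions at $0$ via $B_{n+1}(a)=\sum_{k=0}^{n+1}\binom{n+1}{k}B_k\,a^{n+1-k}$---is the standard proof and is essentially how \cite{BMJS} argue, and your uniformity discussion (constants depending only on $N$, $\theta$, and the uniform decay bound on the cone) is the right justification for the version the paper actually uses. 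Two small bookkeeping corrections: with only $N$ boundary terms the periodic-Bernoulli remainder is $O(|w|^{N-1})$, not $O(|w|^{N})$ (the substitution $t=|w|(x+a)$ costs a factor $|w|^{-1}$), so one should run the expansion with $N+1$ terms and absorb the extra term $f^{(N)}(0)B_{N+1}(a)w^{N}/(N+1)!$ into the error; and the sign in your intermediate $\sum_k B_k$-sum depends on the $B_1$ convention, though the final regrouped identity you state carries the correct sign.
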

  
  To identify a constant term in our asymptotic formula, we cite the following integral calculation of Bringmann, Craig, Males and Ono.
\begin{lemma}[\cite{BrigmannOnoMales}, Lemma 2.3]\label{L:Bringmannintegral}
For $a \in \mathbb{R}^+,$
\begin{align*}
    \int_0^{\infty} \left(\frac{e^{-ax}}{x(1-e^{-x})}-\frac{1}{x^2}-\left(\frac{1}{2}-a \right) \frac{e^{-ax}}{x} \right)dx =\log \left(\Gamma \left(a \right)\right)+\left(\frac{1}{2}-a\right)\log\left(a \right)-\frac{1}{2}\log(2\pi).
\end{align*}
\end{lemma}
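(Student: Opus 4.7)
The plan is to reduce the claim to Binet's first integral representation of the log-Gamma function, which for $\mathrm{Re}(a)>0$ reads
\[
\log\Gamma(a) = \left(a-\tfrac{1}{2}\right)\log a - a + \tfrac{1}{2}\log(2\pi) + \int_0^\infty \left(\frac{1}{e^t-1}-\frac{1}{t}+\frac{1}{2}\right)\frac{e^{-at}}{t}\,dt.
\]
Rearranging, the right-hand side of the lemma equals $-a$ plus the Binet integral, so it suffices to match the left-hand side to $-a+\int_0^\infty \left(\frac{1}{e^x-1}-\frac{1}{x}+\frac{1}{2}\right)\frac{e^{-ax}}{x}\,dx$.

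To achieve this I would first apply the elementary identity $\frac{1}{1-e^{-x}}=1+\frac{1}{e^x-1}$ to peel off an $\frac{e^{-ax}}{x}$ term, and then extract the Binet integrand by writing
\[
\frac{e^{-ax}}{x(e^x-1)} = \frac{e^{-ax}}{x}\left(\frac{1}{e^x-1}-\frac{1}{x}+\frac{1}{2}\right)+\frac{e^{-ax}}{x^2}-\frac{e^{-ax}}{2x}.
\]
Substituting both and collecting like terms, the original integrand rewrites as
\[
\frac{e^{-ax}}{x}\left(\frac{1}{e^x-1}-\frac{1}{x}+\frac{1}{2}\right)+\frac{e^{-ax}-1}{x^2}+a\,\frac{e^{-ax}}{x}.
\]
Crucially the two bare $1/x^2$ contributions cancel, and the leftover remainder has a finite limit at $x=0$ (both the first summand and the combined last two are individually bounded near the origin and decay at infinity).

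For the first summand, Binet's formula immediately gives $\log\Gamma(a)-(a-\tfrac12)\log a+a-\tfrac12\log(2\pi)$. For the remaining two summands I would integrate by parts with $u=e^{-ax}-1$ and $dv=dx/x^2$ on $[\varepsilon,\infty)$, producing
\[
\int_\varepsilon^\infty \frac{e^{-ax}-1}{x^2}\,dx = \frac{e^{-a\varepsilon}-1}{\varepsilon}-a\int_\varepsilon^\infty \frac{e^{-ax}}{x}\,dx,
\]
so the logarithmically divergent tail cancels against $a\int_\varepsilon^\infty \frac{e^{-ax}}{x}\,dx$, and the boundary term tends to $-a$ as $\varepsilon\to 0^+$. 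Adding the two contributions yields exactly $\log\Gamma(a)+(\tfrac12-a)\log a-\tfrac12\log(2\pi)$, as required. The only real obstacle is bookkeeping: each piece produced by the decomposition has a non-integrable singularity at $x=0$ in isolation, so the rearrangement must either be manipulated symbolically in the groupings above (whose pieces are individually integrable) or tracked through the $\varepsilon$-cutoff until the very end; convergence at $+\infty$ is automatic from the factor $e^{-ax}$.
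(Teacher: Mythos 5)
Your proposal is correct. Note that this paper contains no proof of the statement at all: it is imported verbatim as Lemma 2.3 of Bringmann--Craig--Males--Ono \cite{BrigmannOnoMales}, so there is no internal argument to compare against, and a self-contained derivation like yours is perfectly acceptable. Your reduction to Binet's first formula checks out: the identity $\frac{1}{1-e^{-x}}=1+\frac{1}{e^x-1}$ and your regrouping rewrite the integrand exactly as $\frac{e^{-ax}}{x}\left(\frac{1}{e^x-1}-\frac{1}{x}+\frac{1}{2}\right)+\frac{e^{-ax}-1}{x^2}+a\,\frac{e^{-ax}}{x}$, the first group is the Binet integrand (bounded at $0$, exponentially small at $\infty$), and the sum of the last two is $O(1)$ at $0$ and $O(x^{-2})$ at $\infty$, so each group is absolutely integrable and may be integrated separately. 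The $\varepsilon$-cutoff integration by parts correctly gives $\int_0^\infty\left(\frac{e^{-ax}-1}{x^2}+\frac{ae^{-ax}}{x}\right)dx=-a$, and adding Binet's value $\log\Gamma(a)-\left(a-\frac12\right)\log a+a-\frac12\log(2\pi)$ yields precisely the claimed right-hand side. The only point to spell out in a written version is the separate-integrability justification you already flagged (or, equivalently, carrying the $\varepsilon$-cutoff through all terms before passing to the limit); with that, the argument is complete.
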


Finally, we will use Abel partial summation extensively when bounding the twisted eta-products on the minor arcs.

  \begin{proposition}[Abel partial summation, see p. 3 of \cite{Tenenbaum}]\label{P:Abelpartialsummation}
Let $N \in \N_0$ and $M \in \N$. For sequences $\{a_n\}_{n \geq N}$, $\{b_n\}_{n \geq N}$ of complex numbers, if $A_n:=\sum_{N < m \leq n} a_m,$  then
  $$
  \sum_{N < n \leq N+M} a_nb_n=A_{N+M}b_{N+M}+\sum_{N < n < N+M} A_n(b_n-b_{n+1}).
  $$
  \end{proposition}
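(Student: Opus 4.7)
My plan is to prove the identity by a summation by parts argument, which is a purely algebraic rearrangement of the finite sum. The key observation is that the partial sums $A_n$ play the role of an antiderivative of $a_n$ with respect to the forward difference operator, so that the proof mirrors classical integration by parts.

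First I would adopt the convention $A_N := 0$, which is consistent with the given definition $A_n = \sum_{N < m \leq n} a_m$ interpreted as an empty sum at $n = N$. With this convention, the telescoping identity $a_n = A_n - A_{n-1}$ holds for every $n$ in the range $N < n \leq N+M$. Substituting this into the left-hand side splits the sum as
\begin{align*}
    \sum_{N < n \leq N+M} a_n b_n \;=\; \sum_{N < n \leq N+M} A_n b_n \;-\; \sum_{N < n \leq N+M} A_{n-1} b_n.
\end{align*}

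Next I would reindex the second sum via $m = n - 1$, so that it becomes $\sum_{N \leq m \leq N+M-1} A_m b_{m+1}$. The $m = N$ term vanishes by our convention $A_N = 0$, so this is equal to $\sum_{N < n < N+M} A_n b_{n+1}$. From the first sum I would peel off the top term $A_{N+M} b_{N+M}$, leaving $\sum_{N < n < N+M} A_n b_n$. Combining the two expressions yields
\begin{align*}
    \sum_{N < n \leq N+M} a_n b_n \;=\; A_{N+M} b_{N+M} \;+\; \sum_{N < n < N+M} A_n \bigl(b_n - b_{n+1}\bigr),
\end{align*}
which is the desired identity.

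There is essentially no serious obstacle here, since the statement is a finite algebraic identity valid for arbitrary complex sequences; the only thing to watch is the bookkeeping of the summation endpoints after reindexing, which is exactly why it is convenient to impose the convention $A_N = 0$ at the outset.
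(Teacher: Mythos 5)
Your argument is correct: the convention $A_N=0$, the telescoping substitution $a_n=A_n-A_{n-1}$, the reindexing of the second sum, and the peeling off of the top term $A_{N+M}b_{N+M}$ all check out, and the endpoint bookkeeping is handled properly. The paper itself offers no proof of Proposition \ref{P:Abelpartialsummation} — it simply cites Tenenbaum — and your derivation is precisely the standard summation-by-parts argument one would find there, so there is nothing further to compare.
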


\subsection{Elementary bounds}
  The following bound for differences of holomorphic functions will be used in conjunction with Abel partial summation during the course of the circle method. The proof of the following is a straightforward application of the fundamental theorem of calculus and the maximum modulus principle.
\begin{lemma} \label{L:DifferenceEstimate} Let $f : U \to \C$ be a holomorphic function and $\overline{B_r(c)} \subset U$ a compact disk. Then, for all $a,b \in B_r(c)$ with $a \not= b$, we have 
\begin{align*}
    \left| f(b) - f(a)\right| \leq \max_{|z - c|=r} |f'(z)| |b-a|.
\end{align*}

\end{lemma}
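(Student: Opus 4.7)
The plan is to reduce the statement to the fundamental theorem of calculus along a straight-line path from $a$ to $b$, and then apply the maximum modulus principle to $f'$.

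First I would note that since $\overline{B_r(c)}$ is convex, the line segment $\gamma(t) := a + t(b-a)$ for $t \in [0,1]$ lies entirely inside $B_r(c) \subset U$, so the composition $f \circ \gamma$ is a $C^1$ function on $[0,1]$. Applying the chain rule and the fundamental theorem of calculus along this segment gives
\begin{align*}
    f(b) - f(a) = \int_0^1 \frac{d}{dt}\bigl(f(\gamma(t))\bigr)\, dt = (b-a) \int_0^1 f'\bigl(a + t(b-a)\bigr)\, dt.
\end{align*}
Taking absolute values and pulling the factor $|b-a|$ out of the integral yields
\begin{align*}
    |f(b) - f(a)| \leq |b-a| \int_0^1 \bigl|f'(a + t(b-a))\bigr|\, dt.
\end{align*}

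Next I would bound the integrand uniformly by the maximum on the boundary circle. Since $f'$ is itself holomorphic on $U$ and hence continuous on the compact disk $\overline{B_r(c)}$, and the point $a + t(b-a)$ lies in $\overline{B_r(c)}$ for each $t \in [0,1]$, the maximum modulus principle applied to $f'$ gives
\begin{align*}
    \bigl|f'(a + t(b-a))\bigr| \leq \max_{z \in \overline{B_r(c)}} |f'(z)| = \max_{|z-c|=r} |f'(z)|
\end{align*}
for every $t \in [0,1]$. Substituting this bound into the integral and using $\int_0^1 dt = 1$ gives the desired inequality.

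No real obstacle is expected here: the only mild point of care is justifying that the segment between $a$ and $b$ is contained in a region where $f'$ is holomorphic, which is immediate from the convexity of the closed disk and the hypothesis $\overline{B_r(c)} \subset U$. The argument is essentially a complex-analytic version of the mean value inequality, and the maximum modulus step could equivalently be replaced by noting that $|f'|$ is continuous on the compact set $\overline{B_r(c)}$ and therefore attains its maximum, which by the maximum modulus principle must occur on $\partial B_r(c) = \{|z-c|=r\}$.
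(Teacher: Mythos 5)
Your proof is correct and follows exactly the route the paper indicates: the fundamental theorem of calculus along the straight segment from $a$ to $b$ (which lies in the disk by convexity), followed by the maximum modulus principle applied to the holomorphic function $f'$ to move the bound to the boundary circle $|z-c|=r$. Nothing is missing, and no further comment is needed.
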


We also need the following elementary maximum; for a proof see \cite{Parry}, Lemma 5.2.

\begin{lemma}\label{L:exponentialquotientmax}
For $a \in \left(-\frac{\pi}{2}, \frac{\pi}{2}\right)$ and $b \in \mathbb{R}$, we have
$$
\mathrm{Re}\left(\frac{e^{ia}}{1+ib} \right)\leq \cos^2\left(\frac{a}{2}\right),
$$
with equality if and only if $b$ satisfies $\Arg(1+ib)=\frac{a}{2}.$
\end{lemma}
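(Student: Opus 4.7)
The plan is to put $1+ib$ into polar form so that the left-hand side acquires a transparent trigonometric shape, after which the inequality reduces to an elementary identity.

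First I would set $\theta := \Arg(1+ib) \in \left(-\tfrac{\pi}{2}, \tfrac{\pi}{2}\right)$, so that $1+ib = \sqrt{1+b^2}\, e^{i\theta}$ and in particular $\cos\theta = 1/\sqrt{1+b^2}$. Then
$$
\frac{e^{ia}}{1+ib} = \frac{e^{i(a-\theta)}}{\sqrt{1+b^2}},
$$
and taking real parts yields
$$
\mathrm{Re}\!\left(\frac{e^{ia}}{1+ib}\right) = \frac{\cos(a-\theta)}{\sqrt{1+b^2}} = \cos\theta \cdot \cos(a-\theta).
$$

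Next, I would apply the product-to-sum identity to obtain
$$
\cos\theta\cos(a-\theta) = \tfrac{1}{2}\cos(a) + \tfrac{1}{2}\cos(a - 2\theta) \leq \tfrac{1}{2}\bigl(1 + \cos a\bigr) = \cos^2\!\left(\tfrac{a}{2}\right),
$$
which is precisely the claimed bound. Equality occurs exactly when $\cos(a-2\theta)=1$; since $a \in \left(-\tfrac{\pi}{2},\tfrac{\pi}{2}\right)$ and $\theta \in \left(-\tfrac{\pi}{2},\tfrac{\pi}{2}\right)$, so $a-2\theta \in \left(-\tfrac{3\pi}{2},\tfrac{3\pi}{2}\right)$, the only solution is $a=2\theta$, i.e.\ $\Arg(1+ib) = \tfrac{a}{2}$.

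There is no genuine obstacle here — the argument is a one-line computation followed by an elementary trigonometric identity. The only real choice is to work in polar coordinates rather than multiplying numerator and denominator by $1-ib$; the polar viewpoint makes both the upper bound and the equality condition immediately visible, whereas the algebraic form would require a slightly less transparent calculus argument (locating the critical point $b = \tan(a/2)$ of $(\cos a + b\sin a)/(1+b^2)$).
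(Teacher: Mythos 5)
Your proof is correct and complete. Note that the paper does not prove this lemma at all; it simply cites Parry, Lemma 5.2, so your argument actually supplies a self-contained proof where the paper relies on an external reference. The computation checks out: with $\theta=\Arg(1+ib)\in\left(-\frac{\pi}{2},\frac{\pi}{2}\right)$ one has $\mathrm{Re}\left(\frac{e^{ia}}{1+ib}\right)=\cos\theta\cos(a-\theta)=\frac{1}{2}\cos a+\frac{1}{2}\cos(a-2\theta)\leq\cos^2\left(\frac{a}{2}\right)$, and your equality analysis is careful on exactly the point that matters: since $a-2\theta\in\left(-\frac{3\pi}{2},\frac{3\pi}{2}\right)$, the condition $\cos(a-2\theta)=1$ forces $a=2\theta$, which is the stated equality criterion (achieved at the unique value $b=\tan\left(\frac{a}{2}\right)$). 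This polar-coordinate route is arguably cleaner than the alternative of writing the left-hand side as $\frac{\cos a+b\sin a}{1+b^2}$ and locating the critical point by calculus, which is the kind of argument one would otherwise expect from the cited source.
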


\subsection{Bounds for trigonometric series and the polylogarithm}
We recall that for complex numbers $s$ and $z$ with $|z| < 1$ the polylogarithm $\mathrm{Li}_s(z)$ is defined by the series
\begin{align*}
    \mathrm{Li}_s(z) := \sum_{n \geq 1} \frac{z^n}{n^s}.
\end{align*}
We are especially interested in the case $s=2$, where $\mathrm{Li}_2(z)$ is called the dilogarithm. The appendix of a recent preprint of Boyer and Parry \cite{Boyerparry} contains many useful results on the dilogarithm, including the following key lemma.

\begin{lemma}[Proposition 4, \cite{Boyerparry}]\label{L:dilogdecreasing}
The function $\theta \mapsto \mathrm{Re} \left(\sqrt{\textnormal{Li}_2(e^{2\pi i \theta})}\right)$ is decreasing on the interval $[0,\frac12].$
\end{lemma}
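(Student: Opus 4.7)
The plan is to leverage the explicit decomposition of $\mathrm{Li}_2$ on the unit circle and reduce the claim to the monotonicity of a single real-valued function. Setting $\phi := 2\pi\theta \in [0,\pi]$, the classical identity
\begin{equation*}
  \mathrm{Li}_2(e^{i\phi}) = u(\phi) + iv(\phi), \qquad u(\phi) := \tfrac{(\phi-\pi)^2}{4} - \tfrac{\pi^2}{12}, \qquad v(\phi) := -\int_0^\phi \log\!\bigl(2\sin(t/2)\bigr)\,dt,
\end{equation*}
identifies the real part as a quadratic in $\phi$ and the imaginary part as the Clausen function, which is nonnegative on $[0,\pi]$ and vanishes only at the endpoints. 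Writing $w(\phi) := \mathrm{Li}_2(e^{i\phi})$, the principal-branch identity $\mathrm{Re}\sqrt{a+ib} = \sqrt{(|a+ib|+a)/2}$ (valid for $b \geq 0$) reduces the desired monotonicity to showing that
\begin{equation*}
  \psi(\phi) := |w(\phi)| + u(\phi)
\end{equation*}
is non-increasing on $[0,\pi]$.

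Using $u'(\phi) = (\phi-\pi)/2$ and $v'(\phi) = -\log(2\sin(\phi/2))$, a direct computation yields
\begin{equation*}
  \psi'(\phi) = \frac{u'(\phi)\bigl(|w(\phi)| + u(\phi)\bigr) + v(\phi)v'(\phi)}{|w(\phi)|}.
\end{equation*}
Since $u' \leq 0$ and $\psi \geq 0$ throughout $[0,\pi]$, and $v'\leq 0$ on $[\pi/3,\pi]$ (with $v\geq 0$), the numerator is a sum of non-positive terms on the subinterval $[\pi/3,\pi]$ and hence $\psi' \leq 0$ there. For $\phi \in (0,\pi/3)$ one has $v'(\phi) > 0$, so the main step is to dominate the positive contribution $vv'$ by the negative contribution $u'(|w|+u)$. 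On this subinterval $u(\phi) > 0$ (since $u$ vanishes only at $\phi_0 = \pi(1-1/\sqrt{3}) > \pi/3$), so $|w| + u \geq 2u$ and it suffices to verify the simpler inequality
\begin{equation*}
  (\pi-\phi)\,u(\phi) \;\geq\; v(\phi)\bigl(-\log(2\sin(\phi/2))\bigr) \qquad \text{for } \phi \in (0, \pi/3).
\end{equation*}
At $\phi = \pi/3$ the left side equals $\pi^3/54$ while the right side vanishes (since $2\sin(\pi/6) = 1$), while the expansions $v(\phi) = \phi(1-\log\phi) + O(\phi^3)$ and $-\log(2\sin(\phi/2)) = -\log\phi + O(\phi^2)$ show that the left side tends to $\pi^3/6$ while the right side tends to $0$ as $\phi \to 0^+$. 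Concavity of $v$ on $(0,\pi)$, obtained from $v''(\phi) = -\tfrac{1}{2}\cot(\phi/2) \leq 0$, combined with the explicit quadratic form of $u$, allows one to propagate these endpoint estimates through the interval.

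The main obstacle is making the inequality on $(0,\pi/3)$ fully uniform: although numerical sampling shows the ratio of the two sides is bounded well away from $1$, a rigorous bound requires carefully tracking the Clausen function across the transition at $\phi = \pi/3$ where $v'$ changes sign. Once the inequality is established, rescaling $\phi = 2\pi\theta$ yields the claimed monotonicity on $\theta \in [0, 1/2]$.
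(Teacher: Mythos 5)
The paper does not prove this lemma at all; it is quoted from Boyer--Parry (Proposition 4 of \cite{Boyerparry}), so your proposal is an attempt at a self-contained argument. Your reduction is correct and clean: with $w=u+iv$, the identity $\mathrm{Re}\sqrt{w}=\sqrt{(|w|+u)/2}$ reduces the claim to $\psi:=|w|+u$ being non-increasing on $[0,\pi]$, the formula for $\psi'$ is right, and the sign analysis on $[\pi/3,\pi]$ (where $u'\le 0$, $\psi\ge 0$, $v\ge 0$, $v'\le 0$) is complete. The problem is the interval $(0,\pi/3)$: there you reduce to $(\pi-\phi)\,u(\phi)\ge v(\phi)\bigl(-\log(2\sin(\phi/2))\bigr)$, but you never actually prove this inequality. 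You check the two endpoints and then appeal to ``numerical sampling'' and to concavity of $v$ ``propagating the endpoint estimates through the interval''; that is not an argument. Endpoint values of two functions do not control an inequality in between, and the obvious concavity bound $v(\phi)\le \mathrm{Cl}_2(\pi/3)$ fails when combined naively, since then the right side behaves like $-\mathrm{Cl}_2(\pi/3)\log\phi\to\infty$ as $\phi\to 0^+$; likewise a comparison of global extremes fails, because $\max_{(0,\pi/3)} v v' \approx 0.8$ exceeds $\min_{(0,\pi/3)}(\pi-\phi)u=\pi^3/54\approx 0.57$. So as written the crucial step is a genuine gap, even though the inequality itself is true.

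The gap is closable by elementary means, and you should do so explicitly. For instance: for $\phi\in(0,\pi/3]$ the bound $\sin x\ge \frac{3x}{\pi}$ on $[0,\pi/6]$ gives $v'(\phi)\le s$ and $v(\phi)\le \phi(1+s)$ with $s:=\log\frac{\pi}{3\phi}\ge 0$, hence $v v'\le \frac{\pi}{3}e^{-s}s(s+1)\le \frac{\pi}{3}\max_{s\ge0}e^{-s}s(s+1)<0.88$; on $\phi\in(0,0.9]$ one has $(\pi-\phi)u(\phi)\ge(\pi-0.9)u(0.9)>0.97$, which already wins, while on $\phi\in[0.9,\pi/3]$ one instead uses $v\le \mathrm{Cl}_2(\pi/3)<1.02$ and $v'(\phi)\le v'(0.9)<0.14$, so $vv'<0.15<\pi^3/54\le(\pi-\phi)u(\phi)$. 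With such a split the argument becomes a complete, elementary alternative to citing \cite{Boyerparry}; note also that the earlier draft of this paper took a different route entirely (monotonicity of $|\mathrm{Li}_2(e^{2\pi i\theta})|$ together with monotonicity of its argument), which is another workable but more laborious option.
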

We also need to consider the derivative of the function $\theta \mapsto \mathrm{Li}_2(e^{2\pi i\theta})$ and its partial sums. Let 

\begin{align*}
G_M(\theta) := \sum_{1 \leq m \leq M} \frac{e^{2\pi i\theta m}}{m}.
\end{align*}
Then the following bound holds. 
\begin{lemma} \label{L:CosSumBound} We have, uniformly for $0 < \theta < 1$, 
	\begin{align*}
	\left|G_M(\theta)\right| \ll \log\left(\frac{1}{\theta}\right) + \log\left( \frac{1}{1-\theta}\right), \qquad M \to \infty. 
	\end{align*}
	
	\end{lemma}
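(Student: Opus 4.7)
The plan is to bound $G_M(\theta)$ by Abel partial summation against the geometric-sum bound for $e^{2\pi i\theta m}$. Write $a_m = e^{2\pi i\theta m}$ and $b_m = 1/m$ and set $S_n := \sum_{1\leq m\leq n} e^{2\pi i\theta m}$. Proposition \ref{P:Abelpartialsummation} gives
\begin{align*}
G_M(\theta) = \frac{S_M}{M} + \sum_{1 \leq n \leq M-1} S_n\left(\frac{1}{n} - \frac{1}{n+1}\right) = \frac{S_M}{M} + \sum_{1 \leq n \leq M-1} \frac{S_n}{n(n+1)}.
\end{align*}
Summing the geometric series yields $|S_n| \leq 1/|\sin(\pi\theta)|$, and since $|\sin(\pi\theta)| \geq 2\min(\theta, 1-\theta)$ for $0 < \theta < 1$, we obtain the two bounds $|S_n| \leq n$ and $|S_n| \ll 1/\min(\theta, 1-\theta)$.

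Next I would split the tail sum at the threshold $N_0 := \lceil 1/\min(\theta, 1-\theta)\rceil$. For $n < N_0$, using $|S_n| \leq n$ gives $|S_n|/(n(n+1)) \leq 1/(n+1)$, so this piece contributes
\begin{align*}
\sum_{1 \leq n < N_0} \frac{1}{n+1} \ll \log N_0 \ll \log\left(\frac{1}{\theta}\right) + \log\left(\frac{1}{1-\theta}\right).
\end{align*}
For $n \geq N_0$, using $|S_n| \ll N_0$ gives $|S_n|/(n(n+1)) \ll N_0/n^2$, and summing produces $O(1)$. The boundary term $|S_M|/M \leq 1$ is also $O(1)$. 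Combining these estimates, and noting that the bounds are independent of $M$, gives the claimed uniform estimate.

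The argument is essentially routine once one has the dichotomy $|S_n| \ll \min(n, 1/\min(\theta, 1-\theta))$, so there is no real obstacle. The one small point to be careful about is the uniformity in $\theta$ near the endpoints $0$ and $1$: both logarithmic terms must be retained because $\min(\theta, 1-\theta)$ can be small on either side of the interval, and the comparison $\log(1/\min(\theta,1-\theta)) \leq \log(1/\theta) + \log(1/(1-\theta))$ is the easy step that produces the symmetric form stated in the lemma.
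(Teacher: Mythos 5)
Your proof is correct, and it takes a genuinely different route from the paper. You bound $G_M(\theta)$ by Abel partial summation against the closed geometric sum $S_n=\sum_{m\le n}e^{2\pi i\theta m}$, using the dichotomy $|S_n|\le \min\bigl(n,\,|\sin(\pi\theta)|^{-1}\bigr)$ and splitting at the threshold $N_0\asymp 1/\min(\theta,1-\theta)$; this is the classical Dirichlet-test argument, it is independent of $M$, and the additive $O(1)$ terms are harmless since $\log(1/\theta)+\log(1/(1-\theta))\ge \log 4$ and $\min(\theta,1-\theta)\ge\theta(1-\theta)$ lets you pass to the symmetric right-hand side. The paper instead separates the real and imaginary parts, quotes the uniform boundedness of $\sum_{m\le M}\sin(2\pi\theta m)/m$ from Stein--Shakarchi, and controls the cosine partial sums by integrating the meromorphic function $h_\theta(z)=\cos(2\pi\theta z)(\cot(\pi z)-i)/(2iz)$ over a large rectangle and estimating each side, with the $\log(1/\theta)$ loss coming from the vertical sides of height $1/\theta$. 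Your approach is more elementary and self-contained (no contour integration, no external reference for the sine sum) and yields exactly the same uniform bound, which is all that Lemma \ref{L:Gmaxbound} needs; the paper's contour method is heavier machinery for the same estimate, so nothing is lost by your argument.
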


\begin{proof} Note that we have 
	\begin{align*}
	G_M(\theta) = \sum_{1 \leq m \leq M} \frac{\cos(2\pi \theta m)}{m} + i \sum_{1 \leq m \leq M} \frac{\sin(2\pi \theta m)}{m},
	\end{align*}
	and as it is well known that $\sum_{1 \leq m \leq M} \frac{\sin(2\pi \theta m)}{m}$ is uniformly bounded in $\mathbb{R}$ (see \cite{Stein} on p. 94) we are left with the cosine sum. Let $0 < \theta \leq \frac12$ and $M \in \mathbb{N}$. Consider the meromorphic function
	$$
	h_\theta(z) := \frac{\cos(2\pi \theta z)(\cot(\pi z)-i)}{2iz},
	$$
	together with the rectangle $R_M$ with vertices $\frac{1}{2}-iM,\frac{1}{2}+ \frac{i}{\theta}$, $\frac{i}{\theta} + M + \frac12$, and $-iM + M + \frac12$. Notice that in a punctured disk of radius $r<1$ centered at $k\geq 1$, we have  $$ 	h_\theta(z) = \frac{\cos(2\pi \theta k)}{2ik}\left(\frac{1}{\pi(z-k)}-i+O(z-k)\right).$$ With the residue theorem we find 
	\begin{align*}
	\oint_{\partial R_M} h_\theta(z) dz = 2\pi i \sum_{1 \leq m \leq M} \mathrm{Res}_{z=m} h_\theta(z) = \sum_{1 \leq m \leq M} \frac{\cos(2\pi \theta m)}{m},  
	\end{align*}
	where the contour is taken once in positive direction. By the invariance of the function under the reflection $\theta \to 1 - \theta$ it suffices to consider values $0 < \theta \leq \frac12$. A straightforward calculation shows that the bottom side integrals are bounded uniformly in $0 < \theta \leq \frac12$. Similarly, also using that $\cot(\pi z)$ is uniformly bounded on lines $\{\mathrm{Re}(z) \in \frac12 \Z \setminus \Z\}$ for the second part, one argues 
	\begin{align*} 
	\int_{M+\frac12-Mi}^{M+\frac12 + \frac{i}{\theta}} h_\theta(z)dz = O(1) + \int_{M+\frac12}^{M+\frac12 + \frac{i}{\theta}} h_\theta(z)dz \ll  \int_{\frac{1}{2}}^{\frac{1}{2}+\frac{i}{\theta}} \left| \frac{dz}{z+M}\right| \ll \log\left( \frac{1}{\theta}\right).     
	\end{align*}
 A similar argument yields
	\begin{align*}
	    \int_{\frac{1}{2}+\frac{i}{\theta}}^{\frac{1}{2} - Mi} h_\theta(z) dz \ll \log\left( \frac{1}{\theta} \right).
	\end{align*}
	
    For the rectangle top, note with the use of the substitution $z \mapsto \frac{z}{\theta}$ we have
	\begin{align} \label{RectTopSplit}
	\int_{M + \frac12 + \frac{i}{\theta}}^{\frac{1}{2}+\frac{i}{\theta}} h_\theta(z) dz = \int_{\theta \left(M + \frac12\right)+i}^{\frac{\theta}{2}+i} \frac{\cos(2\pi z) (\cot\left( \frac{\pi z}{\theta}\right)+i)}{z} dz-2i \int_{\theta \left(M + \frac12\right)+i}^{\frac{\theta}{2}+i}\frac{\cos(2\pi z)}{z} dz.
	\end{align}
	With the Residue Theorem we obtain
		\begin{align*}
	\int_{\frac{\theta}{2}+i}^{\theta(M+\frac12) + i} = \int_{\frac{\theta}{2}+i}^{i\infty} + \int_{\theta(M+\frac12) + i\infty}^{\theta(M+\frac12) + i}.
	\end{align*}
	and the bound $\cos(2\pi z)(\cot\left( \frac{\pi z}{\theta}\right) +i)= O(e^{-2\pi \mathrm{Im}(z)})$  as $\mathrm{Im}(z) \to \infty$ (since $0 < \theta \leq \frac12$) yields the uniform boundedness of the first integral in \eqref{RectTopSplit}. The integral on the right hand side of \eqref{RectTopSplit} can be bounded via standard contour integration, for instance, by using that $\int_\alpha^\infty \frac{\cos(x)}{x} dx \ll 1$ for all $\alpha \geq 1$. This completes the proof. 
\end{proof}

 Lemma \ref{L:CosSumBound} has the following important consequence.

\begin{lemma} \label{L:Gmaxbound} Let $a$, $b$, $h$ and $k$ be positive integers, such that $\gcd(a,b) = \gcd(h,k) = 1$. We assume that $a$ and $b$ are fixed. Then we have
\begin{align*}
    \sum_{\substack{1 \leq j \leq k \\ ak+bjh \not\equiv 0 \pmod{bk}}} \max_{m \geq 1} \left| G_m\left( \frac{a}{b} + \frac{hj}{k}\right) \right| = O(k),
\end{align*}
as $k \to \infty$, uniformly in $h$.
\end{lemma}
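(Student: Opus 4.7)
The approach is to reduce the problem via Lemma \ref{L:CosSumBound} to a sum of logarithms over the set $\{\theta_j\}_{j=1}^k$ with $\theta_j := \{a/b + hj/k\}$, and then to exploit the fact that $\gcd(h,k) = 1$ forces this set to be a regular arithmetic progression modulo $1$ of step $1/k$. The bound of Lemma \ref{L:CosSumBound} is uniform in $M$ (for small $M$ the trivial estimate $|G_M(\theta)| \leq \sum_{m=1}^{M} 1/m = O(\log M)$ is absorbed in the same right-hand side), hence
\[
\max_{m \geq 1} \bigl|G_m(\theta_j)\bigr| \ll \log(1/\theta_j) + \log(1/(1-\theta_j))
\]
for $\theta_j \in (0,1)$. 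The excluded congruence $ak + bjh \not\equiv 0 \pmod{bk}$ is equivalent to $\theta_j \neq 0$, so it remains to prove
\[
\sum_{\substack{1 \leq j \leq k \\ \theta_j \neq 0}} \left( \log\frac{1}{\theta_j} + \log\frac{1}{1-\theta_j} \right) = O(k)
\]
with an implicit constant independent of $h$.

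Since $\gcd(h,k) = 1$, the residues $\{hj \bmod k\}_{j=1}^{k}$ are a permutation of $0, 1, \ldots, k-1$, so $\{\theta_j\}_{j=1}^k = \{\{a/b + m/k\} : 0 \leq m \leq k-1\}$ as sets. After sorting, one writes $\theta_{(i)} = \delta + (i-1)/k$ for $i = 1, \ldots, k$, where $\delta := \theta_{(1)} \in [0, 1/k)$. Because each $\theta_j$ has denominator dividing $bk$, either $\delta = 0$ or $\delta \geq 1/(bk)$; in the latter case also $1/k - \delta \geq 1/(bk)$.

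I would then compare the two log-sums to harmonic-type sums. For $i \geq 2$ one has $\theta_{(i)} \geq (i-1)/k$, giving $\log(1/\theta_{(i)}) \leq \log(k/(i-1))$; for $i \leq k-1$ one has $1 - \theta_{(i)} \geq (k-i)/k$, giving $\log(1/(1-\theta_{(i)})) \leq \log(k/(k-i))$. Both contributions over their full ranges sum to
\[
2\sum_{j=1}^{k-1}\log(k/j) \;=\; 2(k-1)\log k - 2\log((k-1)!) \;=\; O(k)
\]
by Stirling. The remaining endpoint contributions ($i=1$ in the first sum when $\delta > 0$, and $i = k$ in the second sum when $\delta < 1/k$) are each at most $\log(bk) = O(\log k)$, which is absorbed. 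Since $a$ and $b$ are fixed, no other quantity in the estimate depends on $h$, and thus the bound is uniform in $h$.

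The argument is essentially routine. The only conceptual ingredients are Lemma \ref{L:CosSumBound} (replacing $G_m$ by a log singularity) and the observation, based on $\gcd(h,k) = 1$, that the $\theta_j$ are equidistributed modulo $1$ on an arithmetic progression of step $1/k$. The only subtlety is verifying that $\delta$ and $1/k - \delta$ are at least $1/(bk)$, ensuring logarithmic rather than polynomial boundary contributions; this is automatic because the denominator of every $\theta_j$ divides $bk$ with $b$ fixed.
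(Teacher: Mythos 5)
Your argument is correct and follows essentially the same route as the paper: use $\gcd(h,k)=1$ and the $1$-periodicity of $G_m$ to see that the arguments $\frac{a}{b}+\frac{hj}{k}$ reduce to an arithmetic progression of step $\frac1k$ (offset $\delta=\frac{\alpha}{bk}$ or $0$), apply Lemma \ref{L:CosSumBound}, and bound the resulting logarithmic sum by $O(k)$ via Stirling, with the at most two near-endpoint terms contributing only $O(\log k)$. The paper organizes this as two cases (according to whether the exceptional $j_0$ with $bk \mid ak+bj_0h$ exists), whereas you treat both uniformly via $\delta\in[0,\frac1k)$, but the substance is identical.
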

\begin{proof} The assumption $\gcd(h,k)=1$ implies that the map $j \mapsto hj$ is a bijection modulo $k$. One can show by elementary means that there is at most one $j_0$ such that $bk$ divides $ak+bj_0h$. First, we assume that such a $j_0$ exists. Using $G_m(x+1)=G_m(x)$, we first reorder the sum, then apply Lemma \ref{L:CosSumBound} to obtain 
\begin{align*}
  \sum_{\substack{1 \leq j \leq k \\ j \not= j_0}} \max_{m \geq 1} \left| G_m\left( \frac{a}{b} + \frac{hj}{k}\right) \right| = \sum_{1 \leq j \leq k-1} \max_{m \geq 1} \left| G_m\left( \frac{j}{k}\right) \right| &\ll \sum_{1 \leq j \leq k-1} \left(\log\left(\frac{k}{j}\right)+\log\left(\frac{k}{k-j}\right)\right) \\&= O(k)
\end{align*}
by Stirling's formula. Now, assume that there is no such $j_0$. In this case we find $0 < \alpha_{a,b,k} < b$, such that 
    \begin{align*}
        &\sum_{1 \leq j \leq k} \max_{m \geq 1} \left|G_m\left(\frac{a}{b}+\frac{hj}{k}\right)\right| =\max_{m \geq 1} \left|G_m\left(\frac{\alpha_{a,b,k}}{bk}\right)\right|+\max_{m \geq 1} \left|G_m\left(\frac{\alpha_{a,b,k}}{bk}+1-\frac{1}{k}\right)\right| \\&\hspace{6cm} + \sum_{1 \leq j \leq k-2} \max_{m \geq 1} \left|G_m\left(\frac{\alpha_{a,b,k}}{bk}+\frac{j}{k}\right)\right|.
    \end{align*}
    Using again Lemma \ref{L:CosSumBound} and Stirling's formula, we obtain
    \begin{align*}
    \sum_{1 \leq j \leq k-2} \max_{m \geq 1} \left|G_m\left(\frac{\alpha_{a,b,k}}{bk}+\frac{j}{k}\right)\right| &\ll \sum_{1 \leq j \leq k-2} \left(\log\left(\frac{k}{j}\right)+\log\left(\frac{k}{k-j}\right)\right) = O(k).
    \end{align*}
    Since clearly 
    $$\max_{m \geq 1} \left|G_m\left(\frac{\alpha_{a,b,k}}{bk}\right)\right| + \max_{m \geq 1} \left|G_m\left(\frac{\alpha_{a,b,k}}{bk}+1-\frac{1}{k}\right)\right| = O(\log (k)),$$ the lemma follows.
    \end{proof}

\section{Main results and applications}
\subsection{Twisted eta-products}

In this section, we record the general asymptotic formula for $Q_n(\zeta)$ where $\zeta$ is any root of unity.  Note that $\overline{Q_n(\zeta)}=Q_n(\overline{\zeta})$, so it suffices to find asymptotic formulas for $\zeta$ in the upper-half plane.  The following theorem of Boyer and Goh \cite{Boyergoh} regarding the dilogarithm distinguishes several cases in our main theorem.  Following \cite{Boyergoh}, we define
$$
\Psi_k(\theta):=\mathrm{Re}\left(\frac{\sqrt{\mathrm{Li}_2(e^{ik \theta})}}{k}\right), \qquad \text{for $0 \leq \theta \leq \pi,$  }
$$
and
$$0<\theta_{13}<\frac{2\pi}{3}<\theta_{23}<\pi$$
where each $\theta_{jk}$ is a solution to $\Psi_j\left(\theta\right)=\Psi_k\left(\theta\right)$.
Here,
$$
\theta_{13}=2.06672\dots , \quad \theta_{23}=2.36170\dots.
$$
Since the values $\theta_{13}$ and $\theta_{23}$ arise as solutions to a non-algebraic equation, it is very unlikely that they are rational multiples of $\pi$. Therefore, we will no longer consider them in future investigations. 
\begin{theorem}[\cite{Boyergoh}, discussion prior to Theorem 2]\label{T:BoyerGoh}
For $0\leq \theta \leq  \pi$, we have
$$
\max_{k \geq 1} \Psi_k(\theta)=\begin{cases} \Psi_1(\theta) & \text{if $\theta \in [0,\theta_{13}]$,} \\
\Psi_2(\theta) & \text{if $\theta \in [\theta_{23},\pi]$,}
\\
\Psi_3(\theta) &  \text{if $\theta \in  [\theta_{13},\theta_{23}]$.}\end{cases}
$$
\end{theorem}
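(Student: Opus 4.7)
The goal is to identify, for each $\theta \in [0,\pi]$, which $k \geq 1$ maximizes $\Psi_k(\theta)$. My plan is a two-step reduction: first cap the contribution from all $k \geq 4$ by a universal tail estimate, then perform the pairwise comparison among $\Psi_1, \Psi_2, \Psi_3$.

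The starting point is the crude bound $\Psi_k(\theta) \leq \tfrac{\pi}{k\sqrt 6}$, which follows from $\mathrm{Re}\sqrt{w} \leq \sqrt{|w|}$ together with $|\mathrm{Li}_2(e^{i\varphi})| \leq \sum_{n\geq 1} n^{-2} = \tfrac{\pi^2}{6}$. So $\Psi_k(\theta) \leq \tfrac{\pi}{4\sqrt 6}$ whenever $k \geq 4$, and the key reduction is the pointwise inequality
\begin{equation*}
\max\bigl(\Psi_1(\theta), \Psi_2(\theta), \Psi_3(\theta)\bigr) \;>\; \tfrac{\pi}{4\sqrt 6} \qquad \text{for all } \theta \in [0,\pi].
\end{equation*}
To establish this I would apply Lemma \ref{L:dilogdecreasing} with the substitution $\varphi = k\theta$: the function $\varphi \mapsto \mathrm{Re}\sqrt{\mathrm{Li}_2(e^{i\varphi})}$ is decreasing on $[0,\pi]$, and by conjugate symmetry is the mirror image on $[\pi, 2\pi]$. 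Thus $\Psi_k$ on each window $[\tfrac{2\pi j}{k}, \tfrac{2\pi(j+1)}{k}]$ decreases from its peak value $\tfrac{\pi}{k\sqrt 6}$ to $0$ at the midpoint (since $\mathrm{Li}_2(-1) = -\tfrac{\pi^2}{12}$ is negative real, whose principal square root is purely imaginary) and then increases symmetrically. Using these explicit shapes, $\Psi_1$ exceeds $\tfrac{\pi}{4\sqrt 6}$ on a neighborhood of $0$, $\Psi_3$ exceeds it on a neighborhood of $\tfrac{2\pi}{3}$, and $\Psi_2$ exceeds it on a neighborhood of $\pi$; a quantitative check shows that these three neighborhoods cover the entire interval $[0,\pi]$.

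With $k \geq 4$ eliminated, the remaining task is the three pairwise comparisons. Each difference $\Psi_i - \Psi_j$ is continuous on $[0,\pi]$ with signs computable at boundary points: for example $(\Psi_1 - \Psi_3)(0) = \tfrac{2\pi}{3\sqrt 6} > 0$ while $(\Psi_1 - \Psi_3)(\tfrac{2\pi}{3}) = -\tfrac{\pi}{3\sqrt 6} < 0$, so the intermediate value theorem yields a crossing $\theta_{13} \in (0, \tfrac{2\pi}{3})$, and an analogous argument places $\theta_{23}$ in $(\tfrac{2\pi}{3}, \pi)$. Combined with monotonicity of $\Psi_1$ (which immediately gives $\Psi_1 > \Psi_2, \Psi_3$ up to their crossings) and with the symmetric role $\Psi_2$ plays near $\pi$, one reads off the three regions in the statement.

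The hardest part is uniqueness of the crossings $\theta_{13}$ and $\theta_{23}$: I must show the differences $\Psi_1 - \Psi_3$ and $\Psi_2 - \Psi_3$ each vanish only once in the relevant interval. This is subtle because it needs a controlled derivative of $\mathrm{Re}\sqrt{\mathrm{Li}_2(e^{i\varphi})}$, where the branch of the square root and the branch of $\mathrm{Li}_2$ interact nontrivially. A clean route is to invoke the Jonqui\`ere inversion formulas for $\mathrm{Li}_2$ to express $\mathrm{Re}\sqrt{\mathrm{Li}_2(e^{i\varphi})}$ in terms of elementary trigonometric integrals, reducing the question to a one-variable sign analysis that can then be closed either by an explicit convexity argument or, as a fallback, by rigorous interval arithmetic. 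This is essentially the route taken in \cite{Boyergoh}.
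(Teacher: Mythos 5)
Your reduction to $k\le 3$ rests on the claimed covering inequality $\max\bigl(\Psi_1(\theta),\Psi_2(\theta),\Psi_3(\theta)\bigr)>\frac{\pi}{4\sqrt6}$ for all $\theta\in[0,\pi]$, and that inequality is false precisely where it is needed most, namely in a neighbourhood of $\theta_{23}$. At $\theta_{23}\approx 2.3617$ one has $2\theta_{23}$ very close to $\tfrac{3\pi}{2}$, so $\mathrm{Li}_2\left(e^{2i\theta_{23}}\right)\approx \mathrm{Li}_2(-i)=-\tfrac{\pi^2}{48}-iG$ (with $G$ Catalan's constant), which gives $\Psi_2(\theta_{23})=\Psi_3(\theta_{23})\approx 0.305$, while $\Psi_1(\theta_{23})\approx 0.29$; on the other hand $\frac{\pi}{4\sqrt6}\approx 0.3206$. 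So the minimum of the upper envelope $\max_{k}\Psi_k$ on $[0,\pi]$, attained at $\theta_{23}$, lies strictly \emph{below} your threshold, and the three sets where $\Psi_1,\Psi_2,\Psi_3$ individually exceed $\frac{\pi}{4\sqrt6}$ do not cover an interval of width roughly $0.06$ around $\theta_{23}$. Consequently the crude bound $\Psi_k\le\frac{\pi}{k\sqrt6}$ (which is correct) can at best dispose of $k\ge 5$; it cannot eliminate $k=4$. Ruling out $\Psi_4$ near $\theta_{23}$ genuinely requires angle-dependent information -- there $4\theta$ is close to $\pi$ modulo $2\pi$, so $\mathrm{Li}_2\left(e^{4i\theta}\right)$ is close to the negative real number $-\tfrac{\pi^2}{12}$ and $\Psi_4$ is nearly $0$ -- i.e.\ you must track where $k\theta$ falls modulo $2\pi$ for each excluded $k$, not just use $|\mathrm{Li}_2|\le\tfrac{\pi^2}{6}$. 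This is the central missing idea, and without it the "quantitative check that the three neighborhoods cover $[0,\pi]$" cannot be carried out.

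Two further points. First, the boundary value you quote, $(\Psi_1-\Psi_3)\left(\tfrac{2\pi}{3}\right)=-\tfrac{\pi}{3\sqrt6}$, is incorrect: $\mathrm{Li}_2\left(e^{2\pi i/3}\right)=-\tfrac{\pi^2}{18}+i\,\mathrm{Cl}_2\left(\tfrac{2\pi}{3}\right)$ has nonzero imaginary part, so $\Psi_1\left(\tfrac{2\pi}{3}\right)\approx 0.40\neq 0$ and the difference is only about $-0.026$; the sign (hence the intermediate value step) survives, but by a far smaller margin than you claim, and the same care is needed wherever you implicitly set $\mathrm{Re}\sqrt{\mathrm{Li}_2}=0$ away from $e^{i\pi}$. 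Second, the step you yourself identify as hardest -- that $\Psi_1-\Psi_3$ and $\Psi_2-\Psi_3$ change sign exactly once, which is what actually pins down the three intervals -- is only deferred to "convexity or interval arithmetic," so even granting the reduction it is not a complete argument. For context, the paper does not prove this statement internally at all: it is imported from the discussion preceding Theorem 2 of \cite{Boyergoh}, so there is no in-paper proof to compare against; judged as a self-contained proof, your proposal has the gap at $k=4$ described above.
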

Following \cite{Parry}, define 
$$
\omega_{h,k}(z):=\prod_{j=1}^k \left(1-z \zeta_k^{-jh}\right)^{\frac{j}{k}-\frac{1}{2}}.
$$
We have the following asymptotic formulas.
\begin{theorem}\label{T:twistedetaproduct}

\textup{(1)} If $2\pi \frac{a}{b}\in (0,\theta_{13}),$ then
$$
Q_n\left(\zeta_b^a\right) \sim \frac{\sqrt{1-\zeta_b^a}\mathrm{Li}_2\left(\zeta_b^a\right)^{\frac14}}{2\sqrt{\pi}n^{\frac34}}\exp\left({2\sqrt{\mathrm{Li}_2\left(\zeta_b^a\right)}\sqrt{n}}\right), \qquad n \to \infty.
$$
\textup{(2)} If $2\pi \frac{a}{b}\in (\theta_{23},\pi),$ then
$$
Q_n\left(\zeta_b^a\right) \sim \frac{(-1)^n\sqrt{1-\zeta_b^a}\mathrm{Li}_2\left(\zeta_b^{2a}\right)^{\frac14}}{2\sqrt{2\pi}n^{\frac34}}\exp\left({\sqrt{\mathrm{Li}_2\left(\zeta_b^{2a}\right)}\sqrt{n}}\right), \qquad n \to \infty.
$$
\textup{(3)} If $2\pi \frac{a}{b}\in (\theta_{13},\theta_{23})\setminus \left\{\frac{2\pi}{3}\right\},$ then
$$
Q_n\left(\zeta_b^a\right) \sim (\zeta_3^{-n}\omega_{1,3}(\zeta_b^{a})+\zeta_3^{-2n}\omega_{2,3}(\zeta_b^{a}))\frac{\mathrm{Li}_2(\zeta_b^{3a})^{\frac14}}{2\sqrt{3\pi}n^{\frac34}}\exp\left(\frac{2}{3}\sqrt{\mathrm{Li}_2(\zeta_b^{3a})}\sqrt{n}\right), \qquad n \to \infty.
$$
\textup{(4)}  We have
$$
   Q_n\left( \zeta_3\right)\sim \frac{\zeta_3^{-2n}(1- \zeta_3^2)^{\frac16}(1- \zeta_3)^{\frac12}\Gamma\left(\frac{1}{3}\right)}{2(6\pi n)^{\frac23}}\exp\left( \frac{2\pi}{3}\sqrt{\frac{n}{6}}\right), \qquad n \to \infty.
$$
\end{theorem}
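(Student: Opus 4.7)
The plan is to establish all four parts of Theorem \ref{T:twistedetaproduct} by the Hardy--Ramanujan circle method, following the template of Parry \cite{Parry}. I would start from the Cauchy formula
$$Q_n(\zeta_b^a)=\frac{1}{2\pi i}\oint_{|q|=r_n}\frac{dq}{(\zeta_b^a q;q)_\infty\, q^{n+1}},$$
with radius $r_n=e^{-c/\sqrt{n}}$ chosen at the relevant saddle constant: $c=\mathrm{Re}\sqrt{\mathrm{Li}_2(\zeta_b^a)}$ in case (1), $c=\tfrac12\mathrm{Re}\sqrt{\mathrm{Li}_2(\zeta_b^{2a})}$ in case (2), and $c=\tfrac13\mathrm{Re}\sqrt{\mathrm{Li}_2(\zeta_b^{3a})}$ in cases (3) and (4), and dissect the circle via Farey fractions into major arcs centered at the dominant rationals $h/k$ (with $k=1,2,3,3$ respectively) and minor arcs around every other Farey endpoint.

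On the major arcs in cases (1)--(3), I would parameterize $q=e^{2\pi i h/k-z}$ and expand
$$\log(\zeta_b^a q;q)_\infty^{-1}=\sum_{m\geq 1}\frac{\zeta_b^{am}\zeta_k^{hm}e^{-mz}}{m(1-\zeta_k^{hm}e^{-mz})}.$$
Splitting by the residue of $m$ modulo $k$ separates the singular class $m\equiv 0\pmod{k}$ from the regular classes; applying the uniform Euler--Maclaurin formula (Theorem \ref{T:BMJS}) residue-class by residue-class produces an expansion
$$\log(\zeta_b^a q;q)_\infty^{-1}=\frac{\mathrm{Li}_2(\zeta_b^{ak})}{k^2 z}+B\log z+C+O(z),$$
whose constant term $C$ is isolated through Lemma \ref{L:Bringmannintegral}; the $\omega_{h,k}$ prefactors that appear in case (3) emerge naturally from these constants. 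Inserting this expansion into the Cauchy integral and applying Laplace's method (Theorem \ref{laplacemethod}) around the saddle $z_0=\sqrt{\mathrm{Li}_2(\zeta_b^{ak})}/(k\sqrt{n})$ extracts the stated asymptotics. In case (3) the two endpoints $h=1$ and $h=2$ at $k=3$ contribute equally, combining to give $\zeta_3^{-n}\omega_{1,3}(\zeta_b^a)+\zeta_3^{-2n}\omega_{2,3}(\zeta_b^a)$; in case (2) the phase $q_0^{-n}=(-1)^n$ supplies the $(-1)^n$ prefactor.

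The main obstacle is the minor arc bound: on each Farey arc centered at $h/k$ with $k$ outside the dominant set, the integrand must be shown to be exponentially smaller than the major-arc contribution. I would truncate the series for $\log(\zeta_b^a q;q)_\infty^{-1}$, apply Abel partial summation (Proposition \ref{P:Abelpartialsummation}) to rewrite the truncation in terms of partial sums $G_m(a/b+hm'/k)$ as in Lemma \ref{L:CosSumBound}, use the elementary bound of Lemma \ref{L:exponentialquotientmax} on the remaining factors, and sum over all minor Farey endpoints via Lemma \ref{L:Gmaxbound}. This yields a uniform upper bound of shape $\exp(2\Psi_k(2\pi a/b)\sqrt{n}+O(\log n))$, which by Theorem \ref{T:BoyerGoh} is strictly dominated by the main term for every $k$ outside the dominant set. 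The absence of modularity for the twisted Pochhammer combined with $|\zeta_b^a|=1$ is the essential source of difficulty here.

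Case (4) requires separate treatment because $\zeta_3$ coincides with the $k=3$ saddle: the factor $\omega_{1,3}(\zeta_3)$ diverges and the naive case-(3) formula breaks down. Performing the same residue-class analysis for $\log(\zeta_3 q;q)_\infty^{-1}$ at $q=\zeta_3^2 e^{-z}$, the singular $m\equiv 0\pmod 3$ class contributes $\pi^2/(54z)-\tfrac{1}{6}\log z+\text{const}$, while the regular classes contribute additional constants computed via Lemma \ref{L:Bringmannintegral} with $a=\tfrac13,\tfrac23$, producing the $\Gamma(\tfrac13)$ factor after exponentiation. The modified $\log z$ coefficient $B=-\tfrac16$ changes the Laplace evaluation, replacing the usual $n^{-3/4}$ polynomial decay by $n^{-3/4+1/12}=n^{-2/3}$; the $\zeta_3^{-2n}$ phase and the prefactor $(1-\zeta_3^2)^{1/6}(1-\zeta_3)^{1/2}$ arise from the integration around $q=\zeta_3^2$, encoding the regularized limit of $\omega_{1,3}(\zeta_3)$.
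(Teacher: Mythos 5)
Your proposal is essentially the paper's own proof: the same Farey dissection of the Cauchy integral at a saddle-tuned radius with $k_0\in\{1,2,3\}$, the same major-arc analysis via residue-class Euler--Maclaurin (Theorem \ref{T:BMJS}) and the integral of Lemma \ref{L:Bringmannintegral} to extract the constants and $\omega_{h,k}$-factors, Laplace's method (Theorem \ref{laplacemethod}) at the complex saddle, and the same minor-arc strategy combining Abel summation (Proposition \ref{P:Abelpartialsummation}), the $G_m$-bounds of Lemmas \ref{L:CosSumBound} and \ref{L:Gmaxbound}, Lemma \ref{L:exponentialquotientmax}, and Theorem \ref{T:BoyerGoh}, with case (4) handled exactly as in Lemma \ref{L:Efixedkbound}(4) where the modified $\log t_\theta$-coefficient $\mp\tfrac16$ turns $n^{-3/4}$ into $n^{-2/3}$ on the dominant $h=2$ arc. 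Two harmless calibrations against the paper: the uniform minor-arc error achieved there is only $O(n^{\varepsilon})$ for $k\le n^{\varepsilon}$ and $O(N)=O(\delta\sqrt n)$ for larger $k$ (Lemmas \ref{L:Esmallkbound}, \ref{L:Elargekbound}), absorbed by taking the Farey order $N=\lfloor\delta\sqrt n\rfloor$ with $\delta$ small rather than your stronger $O(\log n)$ claim; and in case (4) the net coefficient $-\tfrac16$ of $\log z$ comes from combining the singular class $m\equiv 0\pmod 3$ (which alone contributes $+\tfrac16\log z$) with the regular classes, not from the singular class by itself, though the exponent and constants you state agree with the paper.
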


We prove Theorem \ref{T:twistedetaproduct} in Sections 4 and 5.

\begin{remark}
Recall that we have the asymptotic formula \eqref{partitionasy} for $Q_n(1)=p(n)$, whereas for $Q_n(-1)$, standard combinatorial methods give
$$
\sum_{n \geq 0} Q_n(-1)q^n=\frac{1}{(-q;q)_{\infty}}=\frac{\left(q;q\right)_{\infty}}{(q^2;q^2)_{\infty}}=\left(q;q^2\right)_{\infty}=\sum_{n \geq 0} (-1)^np_{\mathcal{DO}}(n)q^n,
$$
where $p_{\mathcal{DO}}(n)$ counts the number of partitions of $n$ into distinct odd parts.  An asymptotic formula for $p_{\mathcal{DO}}(n)$ can be worked out using standard techniques. For example, Ingham's Tauberian theorem (see \cite{BMJS}, Theorem 1.1) with the modularity of the Dedekind $\eta$-function yields
$$
Q_n(-1)=(-1)^np_{\mathcal{DO}}(n) \sim \frac{(-1)^n}{2(24)^{\frac14}n^{\frac34}}\exp\left(\pi \sqrt{\frac{n}{6}}\right).
$$
Note the lack of uniformity in the asymptotic formulas for $\zeta$ near $\pm 1$; in particular, the asymptotic formulas for $Q_n(1)$ and $Q_n(-1)$ cannot be obtained by taking $\frac{a}{b} \to \pm 1$ in cases (1) and (2).
\end{remark}

\subsection{Applications to differences of partition functions}
In this section, we apply Theorem \ref{T:twistedetaproduct} to differences of the partition functions $p(a,b,n)$, partitions of $n$ with number of parts congruent to $a$ modulo $b$.  The following elementary proposition relates the the numbers $p(a,b,n)$ to the coefficients $Q_n( \zeta_b^{j})$.
    
    \begin{proposition}\label{P:pabnrewrite}
    We have
    $$
    p(a,b,n)=\frac{1}{b}\sum_{0 \leq j \leq b-1} \zeta_b^{-ja}Q_n\left( \zeta_b^{j}\right).
    $$
    \end{proposition}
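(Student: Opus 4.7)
The proof is a direct application of the orthogonality of roots of unity to the generating function
$$\frac{1}{(\zeta q; q)_\infty} = \sum_{n \geq 0} Q_n(\zeta) q^n,$$
and will take just a few lines.

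First, I would establish the combinatorial interpretation of $Q_n(\zeta)$. Expanding each factor in the product
$$\frac{1}{(\zeta q;q)_\infty} = \prod_{k \geq 1} \frac{1}{1-\zeta q^k} = \prod_{k \geq 1} \sum_{m_k \geq 0} \zeta^{m_k} q^{k m_k},$$
and collecting coefficients shows that
$$Q_n(\zeta) = \sum_{\lambda \vdash n} \zeta^{\ell(\lambda)},$$
where $\ell(\lambda)$ denotes the number of parts of $\lambda$. (As recalled in the footnote in the introduction, conjugation interchanges number of parts with largest part, so this convention is consistent with the definition of $p(a,b,n)$ used throughout.)

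Next I would apply the standard orthogonality relation
$$\frac{1}{b} \sum_{j=0}^{b-1} \zeta_b^{j(m-a)} = \begin{cases} 1 & \text{if } m \equiv a \pmod{b}, \\ 0 & \text{otherwise}, \end{cases}$$
with $m = \ell(\lambda)$. Inserting this indicator into the definition of $p(a,b,n)$ and interchanging the finite sums yields
$$p(a,b,n) = \sum_{\substack{\lambda \vdash n \\ \ell(\lambda) \equiv a \, (b)}} 1 = \sum_{\lambda \vdash n} \frac{1}{b}\sum_{j=0}^{b-1} \zeta_b^{j(\ell(\lambda)-a)} = \frac{1}{b} \sum_{j=0}^{b-1} \zeta_b^{-ja} \sum_{\lambda \vdash n} \zeta_b^{j \ell(\lambda)} = \frac{1}{b}\sum_{j=0}^{b-1} \zeta_b^{-ja} Q_n(\zeta_b^j),$$
which is the claim. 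There is no real obstacle here: the argument is entirely formal, and the only ``nontrivial'' point is recognizing the product expansion as a bivariate generating function in $(q,\zeta)$ where $\zeta$ records $\ell(\lambda)$. Equivalently, one can read the identity directly off \eqref{pgen} by isolating the $j=0$ term as $\frac{1}{b}Q_n(1) q^n$-coefficient and combining with the sum over $1 \leq j \leq b-1$.
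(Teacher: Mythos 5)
Your proof is correct and follows essentially the same route as the paper: both establish the combinatorial identity $Q_n(\zeta)=\sum_{\lambda \vdash n}\zeta^{\ell(\lambda)}$ (you via the product expansion, the paper by citing standard techniques), then insert the orthogonality indicator for the congruence class and interchange the finite sums. No gaps.
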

    \begin{proof}
    Using orthogonality, we can write the indicator functions of congruence classes as $$1_{x \equiv a \hspace{-.3cm} \pmod{b}}= \frac{1}{b}\sum_{j=0}^{b-1} \zeta_b^{-ja} \zeta_b^{jx},$$ and by standard combinatorial techniques (see \cite{andrewsbook}, Ch. 1) one has $$Q_n(z)=\sum_{\lambda \vdash n} z^{\ell(\lambda)}.$$  Hence,
    \begin{align*}
     p(a,b,n)=\sum_{\lambda \vdash n} \frac{1}{b}\sum_{0 \leq j \leq b-1}  \zeta_b^{-ja} \zeta^{j\ell(\lambda)}_{b}&=\frac{1}{b}\sum_{0 \leq j \leq b-1}  \zeta_b^{-ja}\sum_{\lambda \vdash n}  \left(\zeta_b^{j}\right)^{\ell(\lambda)} =\frac{1}{b}\sum_{0 \leq j \leq b-1}  \zeta_b^{-ja} Q_n\left( \zeta_b^{j}\right),
    \end{align*}
    which completes the proof. 
    \end{proof}
Thus, asymptotics for differences of $p(a,b,n)$ can be identified using the asymptotic formulas found for $Q_n(\zeta)$ in Theorem \ref{T:twistedetaproduct}, in particular the equidistribution of the largest part in congruence classes follows immediately: $\frac{p(a,b,n)}{p(n)} \sim \frac{1}{b}.$  Furthermore, using the Hardy--Ramanujan--Rademacher exact formula for $p(n)$ (\cite{andrewsbook}, Theorem 5.1) with Theorem \ref{T:twistedetaproduct}, one can improve this in the various cases.  For example, for $b \geq 5,$ we have
$$
\lim_{n \to \infty}\left(\frac{b\cdot p(a,b,n) - \frac{2\sqrt{3}}{24n-1}\exp\left(\frac{\pi\sqrt{24n-1}}{6}\right)\left(1-\frac{6}{\pi\sqrt{24n-1}} \right)}{An^{-\frac34}\exp(2\lambda_1\sqrt{n})}
-\cos\left(\alpha-\frac{2\pi a}{b}+2\lambda_2\sqrt{n}\right)\right)=0,
$$
where $\lambda_1+i\lambda_2=\sqrt{\mathrm{Li}_2( \zeta_b)},$ $A \geq 0,$ and $\alpha \in [0,2\pi)$ are defined by
$$
Ae^{i\alpha}=\sqrt{\frac{(\lambda_1+i\lambda_2)(1- \zeta_b^a)}{\pi}}.
$$

We record a number of results below in the same vein.  Considering simple differences, $p(a_1,b,n)-p(a_2,b,n),$ it follows from Proposition \ref{P:pabnrewrite} that when rewriting each $p(a,b,n)$ in terms of $Q_n$ the two summands $Q_n(1)=p(n)$ cancel.  With a few exceptions for $b=4$, the dominant terms are always $Q_n(\zeta_b)$ and $Q_n(\zeta_b^{b-1})$.

\begin{theorem}\label{T:simpledifferences}
Let $0 \leq a_1 < a_2 \leq b-1.$ \\
\textup{(1)} For $b=2,$ we have $p(0,2,n)-p(1,2,n)=Q_n(-1).$ \\
\textup{(2)}  For $b=3$, we have 
    $$
    \frac{p(a_1,3,n)-p(a_2,3,n)}{A_{a_1,a_2}n^{-\frac23}\exp\left(\frac{2\pi}{3}\sqrt{\frac{n}{6}}\right)} = \cos\left(\alpha_{a_1,a_2}-\frac{4\pi n}{3}\right) + o(1),
    $$
    where $A_{a_1,a_2} \geq 0$ and $\alpha_{a_1,a_2} \in [0,2\pi)$ are defined by
    $$
    A_{a_1,a_2}e^{i\alpha_{a_1,a_2}}=( \zeta_3^{-a_1}- \zeta_3^{-a_2})(1- \zeta_3^{2})^{\frac16}(1- \zeta_3)^{\frac12}\frac{\Gamma\left(\frac{1}{3}\right)}{3(6\pi)^{\frac23}}.
    $$
    \textup{(3)} For $b=4,$ and $a_1,a_2$ of opposite parity, we have $$p(a_1,4,n)-p(a_2,4,n) \sim \frac{(-1)^{a_1} - (-1)^{a_2}}{4}Q_n(-1).$$
    \textup{(4)} For $b \geq 5$, or for $b=4$ and $a_1,a_2$ of the same parity, we have
$$
\frac{p(a_1,b,n)-p(a_2,b,n)}{B_{a_1,a_2,b}n^{-\frac34}\exp(2\lambda_1\sqrt{n})}=\cos\left(\beta_{a_1,a_2,b}+2\lambda_2\sqrt{n} \right) + o(1),
$$
where $\lambda_1+i\lambda_2=\sqrt{\mathrm{Li}_2( \zeta_b)}$, $B_{a_1,a_2,b} \geq 0$ and $\beta_{a_1,a_2,b} \in [0,2 \pi)$ are defined by
    $$
    B_{a_1,a_2,b}e^{i\beta_{a_1,a_2,b}}=\frac{\left( \zeta_b^{-a_1}- \zeta_b^{-a_2}\right)}{b}\sqrt{\frac{(1- \zeta_b)(\lambda_1+i\lambda_2)}{\pi}}.
    $$

\end{theorem}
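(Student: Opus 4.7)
The plan is to reduce each case to Theorem \ref{T:twistedetaproduct} via Proposition \ref{P:pabnrewrite}, which gives
\begin{align*}
p(a_1,b,n) - p(a_2,b,n) = \frac{1}{b}\sum_{j=1}^{b-1}\left(\zeta_b^{-ja_1} - \zeta_b^{-ja_2}\right) Q_n\left(\zeta_b^j\right),
\end{align*}
since the $j=0$ summands cancel. Because $\overline{Q_n(\zeta)} = Q_n(\overline{\zeta})$, the terms indexed by $j$ and $b-j$ are complex conjugates, so the right-hand side collapses to twice a sum of real parts over $1 \leq j < b/2$, plus a middle summand involving $Q_n(-1)$ when $b$ is even. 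Each of the four cases of the theorem then amounts to identifying the dominant $Q_n(\zeta_b^j)$, substituting the appropriate asymptotic, and matching the resulting real part to a cosine.

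Cases (1)--(3) follow essentially directly. For $b=2$ only $j=1$ contributes and $p(0,2,n)-p(1,2,n)=Q_n(-1)$ is immediate. For $b=3$ only the conjugate pair $j=1,2$ contributes, so applying Theorem \ref{T:twistedetaproduct}(4) to $Q_n(\zeta_3)$, using $\zeta_3^{-2n}=e^{-4\pi in/3}$, and collecting the prefactor $\frac{2}{3}$ together with the stated constants produces the cosine and reveals $A_{a_1,a_2}e^{i\alpha_{a_1,a_2}}$. For $b=4$ with $a_1,a_2$ of opposite parity, the middle coefficient $(-1)^{a_1}-(-1)^{a_2}$ is nonzero; the Remark after Theorem \ref{T:twistedetaproduct} gives that $Q_n(-1)$ grows at rate $\pi\sqrt{n/6}$, which exceeds the rate $2\mathrm{Re}\sqrt{\mathrm{Li}_2(i)}\sqrt{n}\approx 1.208\sqrt{n}$ of the $j=1,3$ pair, so the middle term strictly dominates and yields case (3).

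Case (4) is the crux. Either $b \geq 5$, or $b=4$ with $a_1,a_2$ of equal parity, in which latter case the middle $j=2$ coefficient $\zeta_4^{-2a_1}-\zeta_4^{-2a_2}$ vanishes identically and the formula is immediate after invoking Theorem \ref{T:twistedetaproduct}(1) for $Q_n(i)$ (note $\pi/2 < \theta_{13}$). The $b \geq 5$ case requires showing that the conjugate pair $j \in \{1, b-1\}$ strictly dominates every other $j$. Fix $2 \leq j \leq \lfloor b/2 \rfloor$; by Theorem \ref{T:BoyerGoh}, the exponential rate of $Q_n(\zeta_b^j)$ is $2\max_k \Psi_k(2\pi j/b)\sqrt{n}$. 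If $2\pi j/b \leq \theta_{13}$ then $\max_k\Psi_k = \Psi_1$, and Lemma \ref{L:dilogdecreasing} yields $\Psi_1(2\pi j/b) < \Psi_1(2\pi/b)$. Otherwise I use the uniform bound $\Psi_k(\theta) \leq \pi/(k\sqrt{6})$ (valid for all $k \geq 1$ since $\mathrm{Re}\sqrt{\mathrm{Li}_2(e^{i\phi})}$ attains its maximum $\pi/\sqrt{6}$ at $\phi=0$) combined with the numerical inequality $\Psi_1(2\pi/5) > \pi/(2\sqrt{6})$, which via monotonicity of $\Psi_1$ propagates to all $b \geq 5$. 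Once dominance is established, substituting Theorem \ref{T:twistedetaproduct}(1) into $\frac{2}{b}\mathrm{Re}\left[(\zeta_b^{-a_1}-\zeta_b^{-a_2})Q_n(\zeta_b)\right]$ and invoking $\mathrm{Li}_2(\zeta_b)^{1/4}=(\lambda_1+i\lambda_2)^{1/2}$ identifies $B_{a_1,a_2,b}e^{i\beta_{a_1,a_2,b}}$, while the oscillatory factor $e^{2i\lambda_2\sqrt{n}}$ supplies the cosine argument.

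The main technical obstacle is the growth comparison in case (4) when $2\pi j/b$ falls into the case (2) or case (3) regime of Theorem \ref{T:twistedetaproduct}, where monotonicity of $\Psi_1$ does not apply directly. This is handled by the uniform bound $\max(\Psi_2,\Psi_3) \leq \pi/(2\sqrt{6})$ together with the numerical verification $\Psi_1(2\pi/5) > \pi/(2\sqrt{6})$; the remaining manipulations are routine bookkeeping once dominance is known.
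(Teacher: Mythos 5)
Your proposal is correct and follows essentially the same route as the paper: reduce via Proposition \ref{P:pabnrewrite}, pair conjugate terms, establish that the $j=\pm 1$ terms (or $Q_n(-1)$ in the sporadic $b=4$ case) dominate using Lemma \ref{L:dilogdecreasing} together with Theorem \ref{T:twistedetaproduct}, and then substitute the asymptotics to read off the cosine and the constants. Your explicit treatment of the $\Psi_2,\Psi_3$ regimes via the bound $\Psi_k(\theta)\leq \frac{\pi}{k\sqrt{6}}$ and the check $\Psi_1\!\left(\frac{2\pi}{5}\right)>\frac{\pi}{2\sqrt{6}}$ merely makes precise a dominance step the paper leaves implicit, so no substantive difference remains.
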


\begin{proof}
When $b=2$, Theorem \ref{T:simpledifferences} follows from Proposition \ref{P:pabnrewrite}.  Let $b \geq 5.$  By Proposition \ref{P:pabnrewrite}, it follows that
$$
p(a_1,b,n)-p(a_2,b,n)=\frac{1}{b}\mathrm{Re}\left(  (\zeta_b^{-a_1} - \zeta_{b}^{-a_2})Q_n( \zeta_b)\right)+\frac{1}{b}\sum_{1 \leq j \leq b-2} (\zeta_b^{-ja_1} - \zeta_b^{-ja_2})Q_n\left( \zeta_b^{j}\right).
$$
Upon dividing both sides by $B_{a_1,a_2,b} n^{-\frac34} \exp(2\lambda_1\sqrt{n})$, Lemma \ref{L:dilogdecreasing} implies that the sum on the right is $O(e^{-c\sqrt{n}})$, for some $c>0,$ and it then follows from Theorem \ref{T:twistedetaproduct} that
\begin{align} \label{analogforfuture}
\frac{p(a_1,b,n)-p(a_2,b,n)}{B_{a_1,a_2,b}n^{-\frac34}\exp\left(2\lambda_1\sqrt{n}\right)}=\mathrm{Re}\left(e^{i\left(\beta_{a_1,a_2,b}+2\lambda_2\sqrt{n}\right)}\right)+o(1)+o\left(e^{-c\sqrt{n}} \right),
\end{align}
which gives the claim of Theorem \ref{T:simpledifferences}.  

The other cases are proved similarly by noting that Lemma \ref{L:dilogdecreasing} and Theorem \ref{T:twistedetaproduct} imply that the $Q_n( \zeta_b)$ and $Q_n( \zeta_b^{-1})$ terms always dominate $Q_n( \zeta_b^{j})$ for $j \neq \pm 1,$ except in the case that $b=4$, where $Q_n(-1)$ dominates when $a_1,a_2$ have opposite parity.  But $Q_n(-1)$ vanishes from the sum in Proposition \ref{P:pabnrewrite} when $b=4$ and $a_1,a_2$ have the same parity, which leads to the third case in Theorem \ref{T:simpledifferences}.
\end{proof}

More generally, if $P_{\bold{v}}(x):=\sum_{0 \leq a \leq b-1} v_ax^a \in \mathbb{R}[x]$ with $\bold{v}:= (v_0,...,v_{b-1})\in \mathbb{R}^b$, then Theorem \ref{T:twistedetaproduct} implies asymptotic formulas for any weighted count
$
\sum_{a=0}^{b-1}v_ap(a,b,n).
$
To state our general theorem, we let

\begin{multicols}{2}
$$
L(e^{i\theta}):=\begin{cases}\sqrt{\mathrm{Li}_2(e^{i\theta})} & \text{if $0 \leq \theta < \theta_{13}$,} \\
\frac{\sqrt{\mathrm{Li}_2(e^{3i\theta})}}{3} & \text{if $\theta_{13} < \theta < \theta_{23}$,} \\
\frac{\sqrt{\mathrm{Li}_2(e^{2i\theta})}}{2} & \text{if $\theta_{23} < \theta \leq \pi$.} \end{cases}
$$
Let $\mathcal{Z}(P_{\bold{v}})$ be the roots of $P_{\bold{v}}$, and let \newline $\lambda_1+i\lambda_2=L( \zeta_b^{a_0})$ for $a_0 \leq \frac{b}{2}$, whenever
$$
\mathrm{Re}\left(L( \zeta_b^{a_0})\right)=\max_{\substack{0 \leq a \leq \frac{b}{2} \\  \zeta_b^a\notin \mathcal{Z}(P_{\bold{v}})}} \mathrm{Re} \left(L( \zeta_b^a)\right).
$$
\begin{figure}[H]
    \centering
    \includegraphics[width = 70mm]{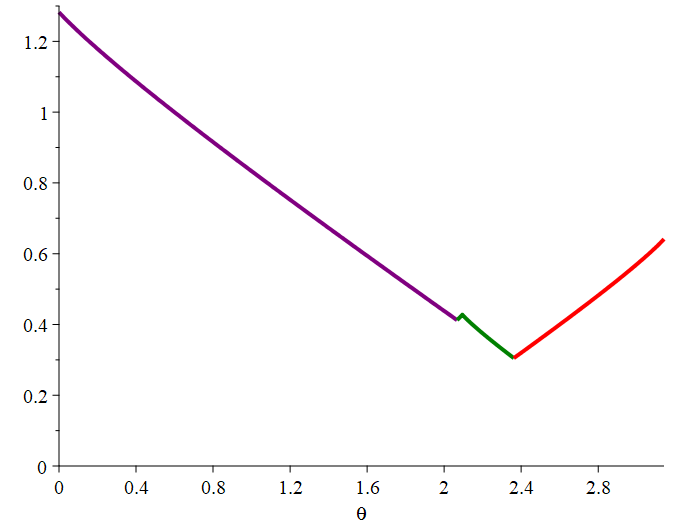} \caption{$\mathrm{Re}(L(e^{i\theta}))$ for $0\leq \theta \leq \pi.$}
    \label{fig:my_label}
\end{figure}
\end{multicols}

\begin{theorem}\label{T:genericrealsums}
With notation as above, we have the following asymptotic formulas. \\
    \textup{(1)} If $a_0=0,$ then $\sum_{0 \leq a \leq b-1} v_ap(a,b,n) \sim \frac{P_{{\bf v}}(1)}{b}p(n).$ \\
    \textup{(2)} If $0 < 2 \pi \frac{a_0}{b} < \theta_{13},$ then
    $$
     \frac{\sum_{0 \leq a \leq b-1}v_ap(a,b,n)}{A_{a_0,b,\bold{v}} n^{-\frac{3}{4}} \exp(2\lambda_1\sqrt{n})} = \cos\left(\alpha_{a_0,b,\bold{v}}+2\lambda_2\sqrt{n} \right) + o(1),
    $$
    where $A_{a_0,b,\bold{v}} \geq 0$ and $\alpha_{a_0,b,\bold{v}} \in [0,2\pi)$ are defined by
    $$
    A_{a_0,b,\bold{v}} \cdot e^{i \alpha_{a_0,b,\bold{v}}}= \frac{P_{\bold{v}}( \zeta_b^{-a_0})}{b}\sqrt{\frac{(\lambda_1+i\lambda_2)(1- \zeta_b^{a_0})}{\pi}}.
    $$
    \textup{(3)} If $\theta_{13} < 2 \pi \frac{a_0}{b} < \theta_{23}$ and $a_0 \neq \frac{b}{3},$ then
    \begin{align*}
    & \frac{\sum_{0 \leq a \leq b-1}v_ap(a,b,n)}{A_{a_0,b,\bold{v}} n^{-\frac{3}{4}} \exp(2\lambda_1\sqrt{n})} = B_{a_0,b}\cos\left(\alpha_{a_0,b,\bold{v}}+\beta_{a_0,b}-\frac{2\pi n}{3} + 2 \lambda_2\sqrt{n}\right) \\ & \hspace{3cm} + C_{a_0,b}\cos\left(\alpha_{a_0,b,\bold{v}}+\gamma_{a_0,b}-\frac{4\pi n}{3} + 2 \lambda_2\sqrt{n}\right) + o(1),
    \end{align*}
    where $B_{a_0,b},C_{a_0,b} \geq 0$ and $\beta_{a_0,b},\gamma_{a_0,b} \in [0,2\pi)$ are defined by
    $$
    B_{a_0,b}e^{i\beta_{a_0,b}} = \omega_{1,3}( \zeta_b^{a_0}),
    $$
    $$
    C_{a_0,b}e^{i\gamma_{a_0,b}}= \omega_{2,3}( \zeta_b^{a_0}).
    $$
    \textup{(4)} If $a_0=\frac{b}{3}$, then
    $$
    \frac{b\sum_{0 \leq a \leq b-1}v_ap(a,b,n)}{D_{\bold{v}} n^{-\frac23} \exp\left(\frac{2\pi}{3}\sqrt{\frac{n}{6}}\right)} = \cos\left(\delta_{\bold{v}}-\frac{4\pi n}{3}\right) + o(1),
    $$
    where $D_{\bold{v}} \geq 0$ and $\delta_{\bold{v}} \in [0,2\pi)$ are defined by
    $$
    D_{\bold{v}} \cdot e^{i\delta_{\bold{v}}}=\frac{(1- \zeta_3^{2})^{\frac16}(1- \zeta_3)^{\frac12}\Gamma(\frac13)}{2(6\pi)^{\frac23}}P_{\bold{v}}\left( \zeta_3^{-1}\right).
    $$
    \textup{(5)} If $\theta_{23} < 2 \pi \frac{a_0}{b} < \pi$, then
    $$
     \frac{\sum_{0 \leq a \leq b-1}c_vp(a,b,n)}{A_{a_0,b,\bold{v}} n^{-\frac34}\exp\left(2\lambda_1\sqrt{n}\right)} = \cos\left(\alpha_{a_0,b,\bold{v}}+\pi n+2\lambda_2\sqrt{n} \right) + o(1).
    $$
    \textup{(6)} If $a_0=\frac{b}{2},$ then $\sum_{0 \leq a \leq b-1} (-1)^a v_ap(a,b,n) \sim \frac{P_{\bold{v}}(-1)}{b}Q_n(-1).$
\end{theorem}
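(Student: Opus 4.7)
The plan is to generalize the argument for Theorem \ref{T:simpledifferences}: substitute Proposition \ref{P:pabnrewrite} into the weighted sum, isolate the dominant terms via Theorem \ref{T:twistedetaproduct}, and pair complex-conjugate contributions to produce real cosines. Swapping the order of summation yields
\begin{equation*}
    \sum_{0 \leq a \leq b-1} v_a \, p(a,b,n) = \frac{1}{b} \sum_{0 \leq j \leq b-1} P_{\bold{v}}\!\left(\zeta_b^{-j}\right) Q_n\!\left(\zeta_b^{j}\right).
\end{equation*}
Any $j$ with $\zeta_b^{-j}\in \mathcal{Z}(P_{\bold{v}})$ contributes nothing, and since $P_{\bold{v}}$ has real coefficients the set of surviving indices is stable under $j \mapsto b-j$, with the corresponding summands forming complex-conjugate pairs.

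Next I would identify the dominant contribution. By Theorem \ref{T:twistedetaproduct}, $|Q_n(\zeta_b^j)|$ grows like $\exp(2\,\mathrm{Re}(L(\zeta_b^j))\sqrt{n})$ up to polynomial factors in $n$, so by the definition of $a_0$ the dominant surviving terms are those indexed by $j\in\{a_0,\,b-a_0\}$. In cases (1) and (6), where $a_0\in\{0,b/2\}$, this dominant index is self-conjugate and the claim follows by simply reading off the coefficient of $Q_n(\pm 1)$. Otherwise, summing the $j=a_0$ and $j=b-a_0$ contributions produces $\tfrac{2}{b}\,\mathrm{Re}\!\left(P_{\bold{v}}(\zeta_b^{-a_0})\,Q_n(\zeta_b^{a_0})\right)$, into which I would substitute the appropriate asymptotic from Theorem \ref{T:twistedetaproduct}. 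Writing the resulting complex prefactor $P_{\bold{v}}(\zeta_b^{-a_0})\cdot(\text{asymptotic coefficient})$ in polar form yields $A_{a_0,b,\bold{v}}\,e^{i\alpha_{a_0,b,\bold{v}}}$, while the extra $(-1)^n$ of case (2) of Theorem \ref{T:twistedetaproduct} and the $\zeta_3^{-n},\zeta_3^{-2n}$ of case (3) produce the explicit $\pi n$ and $-\tfrac{2\pi n}{3},-\tfrac{4\pi n}{3}$ phase shifts; case (3) yields two cosines precisely because $\omega_{1,3}(\zeta_b^{a_0})$ and $\omega_{2,3}(\zeta_b^{a_0})$ give independent complex amplitudes, which in polar form become $B_{a_0,b}e^{i\beta_{a_0,b}}$ and $C_{a_0,b}e^{i\gamma_{a_0,b}}$. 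Case (4) is the sporadic situation $\zeta_b^{a_0}=\zeta_3$, where Theorem \ref{T:twistedetaproduct}(4) forces the different exponent $n^{-2/3}$ and the single phase $-\tfrac{4\pi n}{3}$.

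The main technical step is the strict-dominance claim: for every surviving $j\notin\{a_0,\,b-a_0\}$ I need $\mathrm{Re}(L(\zeta_b^{a_0}))>\mathrm{Re}(L(\zeta_b^j))$, which then yields a relative error of order $e^{-c\sqrt{n}}$ for some $c>0$, matching the $o(1)$ error in the statement. This uses Theorem \ref{T:BoyerGoh} to describe the piecewise structure of $\mathrm{Re}(L(e^{2\pi i\theta}))$ via $\Psi_1,\Psi_2,\Psi_3$, together with Lemma \ref{L:dilogdecreasing} for strict monotonicity within each piece. Particular care is needed at the thresholds $\theta_{13},\theta_{23}$ where two pieces meet; however, the remark following the definition of $\theta_{13},\theta_{23}$ observes that these are not rational multiples of $\pi$, so they are never attained by $2\pi a/b$ for integers $a,b$, which rules out ambiguity in the definition of $a_0$ and confirms the strictness of the maximum.
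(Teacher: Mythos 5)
Your proposal matches the paper's proof in essentially every step: both substitute Proposition \ref{P:pabnrewrite}, isolate the conjugate pair $j=\pm a_0$ as the dominant contribution (cases (1), (6) being the self-conjugate ones, case (4) the sporadic $\zeta_3$ case), and then insert the asymptotics of Theorem \ref{T:twistedetaproduct}, writing the complex prefactor in polar form so that the $(-1)^n$ and $\zeta_3^{-n},\zeta_3^{-2n}$ factors become the stated phase shifts, with the remaining surviving terms absorbed into an exponentially small error. Two small caveats on your framing of the "technical step'': the paper only remarks that $\theta_{13},\theta_{23}$ are \emph{unlikely} to be rational multiples of $\pi$ (it does not prove this), and strict dominance of $j=\pm a_0$ does not follow from "monotonicity within each piece'' --- $\mathrm{Re}\left(L\left(e^{i\theta}\right)\right)$ is not monotone on $(\theta_{13},\theta_{23})$ since $\Psi_3$ peaks at $\theta=\frac{2\pi}{3}$, so ties are conceivable there; the paper's own sketch glosses over this exactly as you do, relying on the definition of $a_0$ as the maximizer.
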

\begin{proof}
     Theorem \ref{T:genericrealsums} is proved similarly to Theorem \ref{T:simpledifferences}. For Cases (1) and (6), the asymptotic formula for $Q_n(1) \sim p(n)$ and Proposition \ref{P:pabnrewrite} directly implies
     \begin{align*}\sum_{0 \leq a \leq b-1} v_ap(a,b,n) &\sim \frac{1}{b}(v_0Q_n(1) + v_1Q_n(1)+...+v_{b-1}Q_n(1)) = \frac{P_{\textbf{v}}(1)p(n)}{b},\\ \sum_{0 \leq a \leq b-1}(-1)^a v_ap(a,b,n) &\sim \frac{1}{b}(v_0Q_n(-1) - v_1Q_n(-1)+...+(-1)^{b-1}v_{b-1}Q_n(-1)) \\ &= \frac{P_{\textbf{v}}(-1)Q_n(-1)}{b},
     \end{align*}
for Cases (1) and (6) respectively.  
   \par For Cases (2), (3), and (5), the asymptotic main term (using that $\overline{Q_n(\zeta)} = Q_n(\overline{\zeta})$) is
    $$p(a,b,n) \sim \frac{1}{b}\left(\zeta^{-a_0 a}_bQ_n(\zeta^{a_0}_b)+\overline{\zeta^{-a_oa}_b{Q_n(\zeta^{a_0}_b)}}\right) = \frac{2}{b}\mathrm{Re}\left(\zeta^{-a_0 a}_bQ_n(\zeta^{a_0}_b)\right). $$
    The proof then follows by applying Theorem \ref{T:twistedetaproduct} and dividing by the appropriate normalizing factors in analog to Equation \eqref{analogforfuture}.
    \par For case (4), there is only one main term in the sum for $$p(a,b,n)\sim \zeta^{-\frac{a}{3}}Q_n\left(\zeta^{\frac{1}{3}}\right).$$ Applying Theorem \ref{T:twistedetaproduct} and the analog to Equation \eqref{analogforfuture} again proves the claim.
\end{proof}

One application of the above theorem generalizes Theorem \ref{T:simpledifferences} to differences of partitions with number of parts modulo $b$ in one of two disjoint sets of residue classes $S_1, S_2 \subset [0,b-1]$.  That is, we consider
$$
P_{S_1,S_2}(x):=\sum_{a \in S_1} x^a-\sum_{a \in S_2} x^a, 
$$
and prove a more explicit version of Theorem \ref{T:genericrealsums} in this case.  Since $P_{S_1,S_2}(x)$ is a polynomial of degree at most $b-1$ with integer coefficients, there must exist some $d \mid b$ such that $ \zeta_d \notin \mathcal{Z}(P_{S_1,S_2}),$ otherwise $P_{S_1,S_2}$ would be divisible by $\prod_{d \mid b} \Phi_{d}(x)=x^b-1,$ where $\Phi_d$ is the $d$-th cyclotomic polynomial, a contradiction.

\begin{theorem}\label{T:differenceswithsets}
Let $b \geq 2$ and let $S_1, S_2 \subset [0,b-1]$ be disjoint subsets of integers.  If $|S_1| \neq  |S_2|,$ then 
$$\sum_{a \in S_1}p(a,b,n)-\sum_{a \in S_2}p(a,b,n) \sim \frac{(|S_1|-|S_2|)p(n)}{b}.$$  
Otherwise, if $|S_1|=| S_2|$, then we have the following cases.  Let $d_0$ be the largest integer such that $d_0 \mid b$ and $ \zeta_{d_0} \notin \mathcal{Z}(P_{S_1,S_2}).$ \\
    \textup{(1)} If $d_0\geq 5$ or if $d_0=4$ and $-1 \in \mathcal{Z}(P_{S_1,S_2})$, then
    $$
    \frac{\sum_{a \in S_1} p(a,b,n)-\sum_{a \in S_2} p(a,b,n)}{A_{d_0,S_1,S_2} n^{-\frac34} \exp(2\lambda_1\sqrt{n})} = \cos\left(\alpha_{d_0,S_1,S_2}+2\lambda_2\sqrt{n}\right) + o(1),
    $$
    where $\lambda_1+i\lambda_2=\sqrt{\mathrm{Li}_2( \zeta_{d_0})}$, $A_{d_0,S_1,S_2}\geq 0$ and $\alpha_{d_0,S_1,S_2} \in [0,2\pi)$ are defined by
    $$
    A_{d_0,S_1,S_2} \cdot e^{i\alpha_{d_0,S_1,S_2}}=\frac{P_{S_1,S_2}\left( \zeta_{d_0}^{-1}\right)}{b}\sqrt{\frac{(\lambda_1+i\lambda_2)(1- \zeta_{d_0})}{\pi}}.
    $$
    \textup{(2)}  If $d_0=4$ or $3$ with $b$ even, and $-1 \notin \mathcal{Z}(P_{S_1,S_2})$, we have the following allowable sets:
    \begin{itemize}
\item If $d_0 = 4$ and $(k_1+k_2)\not\equiv 0 \pmod{2}$,
    \begin{align*}
     S_1\times S_2&= \{a: a\leq b-1, \; a\equiv k_1 \pmod{4}\}\times\{a: a\leq b-1, \; a\equiv k_2 \pmod{4}\}.
    \end{align*}
 
\item If $d_0=3$, $b$ is even and $k_1 \not\equiv k_2 \pmod{2},$
\begin{align*}
    S_1\times S_2 &= \{a: a\leq b-1,\; a\equiv k_1 \pmod{2}\}\times\{a: a\leq b-1,\; a\equiv k_2 \pmod{2}\}.
\end{align*}

\end{itemize}
The asymptotic formulas are then given by
    $$
    \sum_{a \in S_1}p(a,b,n)-\sum_{a \in S_2} p(a,b,n) \sim \frac{N_{S_1,S_2}}{b}Q_n(-1),$$
    where
    $$
    N_{S_1,S_2}=\begin{cases} (-1)^{k_1}b& \textnormal{if}\;d_0=3,\; b \; \textnormal{even},\\  (-1)^{k_1}\frac{b}{2} & \textnormal{if}\;d_0=4.\end{cases}
    $$
    \textup{(3)}  If $d_0=3$, $b$ is even and $-1 \in \mathcal{Z}(P_{S_1,S_2})$, or $d_0=3$ and $b$ is odd, we have the following sets $S_1$ and $S_2$:
    \begin{itemize}
        \item If $b$ is odd, $S_1$ and $S_2$ must contain distinct residue classes modulo $3$.
        \item If $b$ is even, 
        \begin{align*}
            S_1\times S_2 &= \{a: a\leq b-1,\; a\equiv k_1 \;\textnormal{or}\; k_2 \pmod{6}\}\times\{a: a\leq b-2,\; a\equiv k_3\; \textnormal{or}\; k_4 \pmod{6}\},
        \end{align*}
        where 
\begin{align*}
    (k_1,k_2,k_3,k_4) &= (0,3,2,5),\; (2,5,0,3),\; (1,4,2,5),\; (2,5,1,4),
    (1,3,1,4),\; \textnormal{or}\;  (1,4,1,3).
\end{align*}
    \end{itemize}
The asymptotic formula is then given by
    $$
    \sum_{a \in S_1} p(a,b,n)-\sum_{a \in S_2} p(a,b,n) \sim \frac{2}{b}\mathrm{Re}\left(Q_n(\zeta_3)P_{S_1,S_2}\left(\zeta^{-1}_3\right)\right). $$
\textup{(4)}  If $d_0=2,$ then for some $a_1, a_2$ of opposite parity we have 
$$ (S_1,S_2)=\{a \equiv a_1 \pmod{2}\} \times \{a \equiv a_2 \pmod{2}\}$$
 and
    $$
    \sum_{a \in S_1}p(a,b,n)-\sum_{a \in S_2} p(a,b,n) \sim (-1)^{a_1}Q_n(-1).$$  
  
\end{theorem}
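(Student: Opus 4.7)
The strategy begins from Proposition \ref{P:pabnrewrite}, which yields
\begin{align*}
\sum_{a \in S_1} p(a,b,n) - \sum_{a \in S_2} p(a,b,n) = \frac{1}{b} \sum_{j=0}^{b-1} P_{S_1,S_2}\left(\zeta_b^{-j}\right) Q_n\left(\zeta_b^j\right).
\end{align*}
The $j=0$ term equals $\frac{|S_1|-|S_2|}{b} p(n)$. When $|S_1| \neq |S_2|$ this is nonzero, and by Theorem \ref{T:twistedetaproduct} combined with Lemma \ref{L:dilogdecreasing}, every $Q_n(\zeta_b^j)$ with $j \neq 0$ has strictly smaller exponential order than $p(n) = Q_n(1)$; the $j=0$ contribution is then asymptotically the full answer.

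For the remaining cases assume $|S_1|=|S_2|$, so the $j=0$ contribution vanishes. Since $P_{S_1,S_2} \in \mathbb{Z}[x]$, the condition $P_{S_1,S_2}(\zeta_b^{-j})=0$ is Galois-invariant and depends only on the multiplicative order $d = b/\gcd(j,b)$ of $\zeta_b^j$. We regroup the sum over divisors $d \mid b$, retaining only those with $\zeta_d \notin \mathcal{Z}(P_{S_1,S_2})$. Applying Theorem \ref{T:twistedetaproduct} and a direct numerical comparison based on Theorem \ref{T:BoyerGoh} and Lemma \ref{L:dilogdecreasing}, the exponential growth rates of $Q_n(\zeta_d)$ over divisors $d > 1$ are ordered
\begin{align*}
d=3 \;<\; d=4 \;<\; d=2 \;<\; d=5 \;<\; d=6 \;<\; \cdots,
\end{align*}
and within each fixed $d$ the pair $(\zeta_d,\overline{\zeta_d})$ dominates among the primitive $d$-th roots of unity.

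Using this ordering, the leading surviving term is identified case by case. If $d_0 \geq 5$, the conjugate pair $Q_n(\zeta_{d_0}^{\pm 1})$ dominates, and combining conjugate contributions via $\overline{Q_n(\zeta)} = Q_n(\overline{\zeta})$ into a single cosine yields Case \textup{(1)}. The subcase $d_0 = 4$ with $-1 \in \mathcal{Z}(P_{S_1,S_2})$ is the same except the $d=2$ term is killed, leaving $Q_n(\pm i)$ on top. If instead $d_0 \in \{3,4\}$ and $-1 \notin \mathcal{Z}(P_{S_1,S_2})$, then (necessarily with $b$ even) the $d=2$ term survives and outranks the $d=3,4$ terms, so $Q_n(-1)$ is dominant, yielding Case \textup{(2)}. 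When $d_0 = 3$ with either $b$ odd (so $2 \nmid b$ and $d=2$ never appears) or $-1 \in \mathcal{Z}(P_{S_1,S_2})$, the top surviving rate is $d=3$ and $Q_n(\zeta_3^{\pm 1})$ controls the asymptotics, giving Case \textup{(3)}. Finally, $d_0 = 2$ leaves only $Q_n(-1)$, giving Case \textup{(4)}. In each case the explicit constants are obtained by evaluating $P_{S_1,S_2}$ at the relevant root of unity and reading off the resulting asymptotic from Theorem \ref{T:twistedetaproduct}.

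The main obstacle is the \emph{classification} of admissible pairs $(S_1,S_2)$ in Cases \textup{(2)} and \textup{(3)}, where the theorem lists explicit residue-class structures for $S_1$ and $S_2$. The hypotheses force $P_{S_1,S_2}(x)$ to be divisible by $(x-1)$ together with every cyclotomic factor $\Phi_d(x)$ for which $\zeta_d \in \mathcal{Z}(P_{S_1,S_2})$, in particular for all $d \mid b$ with $d > d_0$. Combined with the $\{-1,0,1\}$-coefficient restriction inherited from the set-difference form of $P_{S_1,S_2}$, the degree bound $b-1$, and the balance $|S_1|=|S_2|$, these divisibility constraints leave only the explicitly listed residue-class structures. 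This algebraic step is a finite but delicate bookkeeping on cyclotomic factorizations modulo small primes; once completed, the asymptotic statements follow immediately from the growth-rate analysis above and Theorem \ref{T:twistedetaproduct}.
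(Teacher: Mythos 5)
Your proposal is correct and follows essentially the same route as the paper's own (sketched) proof: expand via Proposition \ref{P:pabnrewrite}, use Lemma \ref{L:dilogdecreasing} and Theorem \ref{T:twistedetaproduct} to rank the growth of $Q_n(\zeta_d)$ (with the same ordering $\zeta_3, \zeta_4, \zeta_2, \zeta_5, \dots, 1$), identify the dominant conjugate pair determined by $d_0$ in each case, and classify the admissible $(S_1,S_2)$ from divisibility of $P_{S_1,S_2}$ by $(x^b-1)/\Phi_{d_0}(x)$ together with the degree bound. Like the paper, you leave the final enumeration of sets in cases (2)--(4) as a finite bookkeeping step rather than writing it out, so the two arguments match in both method and level of detail.
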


\begin{proof}[Proof sketch of Theorem \ref{T:differenceswithsets}]
The asymptotic analysis is similar to the proof of Theorem \ref{T:simpledifferences}, since Lemma \ref{L:dilogdecreasing} and Theorem \ref{T:twistedetaproduct} imply that the sequence $Q_n( \zeta_d)$, ranked from in asymptotic order from least to greatest, is
$$
Q_n( \zeta_3), \ Q_n(i), \ Q_n(-1), \ Q_n(\zeta_5), \ Q_n(\zeta_6), \dots, Q_n(1)=p(n).
$$
Thus, for example in case (3), the asymptotic behavior of $\sum_{a \in S_1} p(a,b,n) - \sum_{a \in S_2} p(a,b,n)$ is determined by $Q_n( \zeta_3)$ and $Q_n( \zeta_3^{-1})$ since all other $Q_n( \zeta_b^a)$ vanish in $P_{S_1,S_2}( \zeta_{b}^{-j})Q_n( \zeta_b^{j})$.  Furthermore, in this case we must have
$$
\frac{x^{b}-1}{\Phi_3(x)} \mid P_{S_1,S_2}(x),
$$
and $\deg (P_{S_1,S_2}(x))\leq b-1$.  Since
$$
\frac{x^{b}-1}{\Phi_3(x)}=x^{b-2}-x^{b-3}+x^{b-5}-x^{b-6}+ \dots + x^{4}-x^3+x-1,
$$
this leads directly to the possible sets $S_1$ and $S_2$ described in case 3.  Finally, note that, given $S_1$ and $S_2$ defined in terms of $a_1$ and $a_2$ modulo $b$, we have
$$
P_{S_1,S_2}\left( \zeta_3^{-1}\right)=\sum_{\substack{0 \leq a \leq b \\a \equiv a_1 \pmod{3}}}  \zeta_{3}^{-a}-\sum_{\substack{0 \leq a \leq b \\ a \equiv a_2 \pmod{3}}}  \zeta_{3}^{-a}=\frac{b}{3}( \zeta_3^{-a_1}- \zeta_3^{-a_2}).
$$  This is taken into account in the definition of the constant $B_{S_1,S_2}.$ The other cases are proved similarly.
\end{proof}

We make a few remarks. 
\begin{remark} \label{R:differenceremark}

\textup{(1)} Theorem \ref{T:differenceswithsets} case (1), in combination with Lemma \ref{L:dilogdecreasing}, shows that one can reduce the growth of the amplitudes in the differences exponentially, as long as the corresponding polynomial $P_{S_1, S_2}$ vanishes at the crucial roots of unity. But the options for such types of cancellation strongly depend on $b$. For example, if $b$ is a prime number, there is not even a rational combination (except for the trivial combination) such that the amplitudes of $\sum_{0 \leq a < b} v_a p(a,b,n)$ grow exponentially less than any simple difference $p(a_1,b,n) - p(a_2,b,n)$. The simple algebraic reason behind this is that the minimal polynomial of $\zeta_b$ has degree $b-1$ in this case. It would be interesting to find a purely combinatorial interpretation for this fact. \\

\textup{(2)} Note that if we shift the residue classes in $S_1$ and $S_2$ by some integer $r$, and then take least residues modulo $b$ to compute $P_{S_1+r,S_2+r}(x)$, then this polynomial has the same roots of unity as $P_{S_1,S_2}(x)$, and so 
$$
\sum_{a \in S_1+r}p(a,b,n)-\sum_{a \in S_2+r}p(a,b,n)
$$
always has the same asymptotic behavior in Theorem \ref{T:differenceswithsets} as $\sum_{a \in S_1}p(a,b,n)-\sum_{a \in S_2}p(a,b,n).$ At the same time, the phase in the cosine changes. Indeed, we obtain 
\begin{align} \label{E:phaseshift}
        \frac{\sum_{a \in S_1+r} p(a,b,n)-\sum_{a \in S_2+r} p(a,b,n)}{A_{d_0,S_1,S_2} n^{-\frac34}\exp\left(2\lambda_1\sqrt{n}\right)} = \cos\left(\alpha_{d_0,S_1,S_2} - \frac{2\pi r}{d_0} +2\lambda_2\sqrt{n}\right) + o(1),
\end{align}
where all the constants are the same as in Theorem \ref{T:differenceswithsets} (1).  One can further use trigonometric identities to obtain a wider class of more classical asymptotic formulas, for instance regarding squared partition differences. Indeed, if $4|d_0$ in \eqref{E:phaseshift}, one finds using $\sin^2(x) + \cos^2(x) = 1$, as $n \to \infty$,
\begin{align*}
    & \left( \sum_{a \in S_1} p(a,b,n)-\sum_{a \in S_2} p(a,b,n) \right)^2 + \left( \sum_{a \in S_1+\frac{d_0}{4}} p(a,b,n)-\sum_{a \in S_2+\frac{d_0}{4}} p(a,b,n) \right)^2  \\
    & \hspace{8.5cm} \sim A_{d_0,S_1,S_2}^2 n^{-\frac32}\exp\left(4\lambda_1\sqrt{n}\right).
\end{align*}

\textup{(3)} Cases 3 and 4 in Theorem \ref{T:differenceswithsets} show that for $d_0\in\{2,3\}$, there are finitely many sets $S_1$ and $S_2$ such that the asymptotic behavior of  $\sum_{a \in S_1}p(a,b,n)-\sum_{a \in S_2}p(a,b,n)$ is determined by $Q_n( \zeta_{d_0})$, and that the number of such sets is independent of $b$.  In fact this is true for any $d_0$, and we leave it as an open problem to describe the sets $S_1,S_2$ in general.  The sets $S_1$ and $S_2$ need not each consist of one congruence class each modulo $d_0,$ for example with $b=10$, the polynomial $P_{\{1,3,6,8\},\{0,2,5,7\}}(x)$ has $d_0=5.$ 
\end{remark}

\subsection{Examples} In this section, we provide some examples. 

\begin{example} Let $b=6$, and consider the difference $p(1,6,n) - p(5,6,n)$. Then according to Theorem \ref{T:simpledifferences}, we obtain
\begin{align} \label{E:p6explicit}
    \frac{p(1,6,n) - p(5,6,n)}{Bn^{-\frac34} \exp\left(2\lambda_1 \sqrt{n}\right)} = \cos\left( \beta + 2\lambda_2 \sqrt{n}\right) + o(1), 
\end{align}
where $\lambda_1 + i\lambda_2 = \sqrt{\mathrm{Li}_2(\zeta_6)}$, and $B > 0$ and $\beta \in [0, 2\pi)$ are given implicitely by 
\begin{align*}
    Be^{i\beta} = \frac{\zeta_6^{-1} - \zeta_6^{-5}}{6} \sqrt{\frac{(1-\zeta_6) (\lambda_1 + i \lambda_2)}{\pi}} = \frac{i}{\sqrt{12 \pi}} \sqrt{\left( \frac12 - i\frac{\sqrt{3}}{2}\right) \sqrt{\mathrm{Li}_2(\zeta_6)}}.  
\end{align*}
Note that this implies (choosing $B$ to be the absolute value of the right hand side of the equation above)
\begin{align}
    \label{E:lambda1} \lambda_1 & = 0.81408 \ldots, \qquad \lambda_2 = 0.62336 \ldots, \\
    \nonumber    B & = 0.23268 \ldots, \qquad \hspace{1.4mm} \beta = 1.37394 \ldots.
\end{align}
Considering the first 900 coefficients numerically yields 
$$ M := \{7, 26, 59, 104, 162, 233, 316, 412, 521, 642, 776, ...\},$$ 
which are the highest indices until a change of signs in the sequence $p(1,6,n) - p(5,6,n)$. We can compare this exact result to the prediction of formula \eqref{E:p6explicit}. By considering the roots of the cosine, we find that it changes signs approximately at 
$$M' := \{7, 27, 59, 104, 162, 233, 316, 412, 521, 642, 777, ...\}.$$
Note that in the first eleven cases, only case two with $27$ and case eleven with $777$ give slightly wrong predictions. 

\begin{figure}[H]
         \centering
         \includegraphics[width=120mm, height= 70mm]{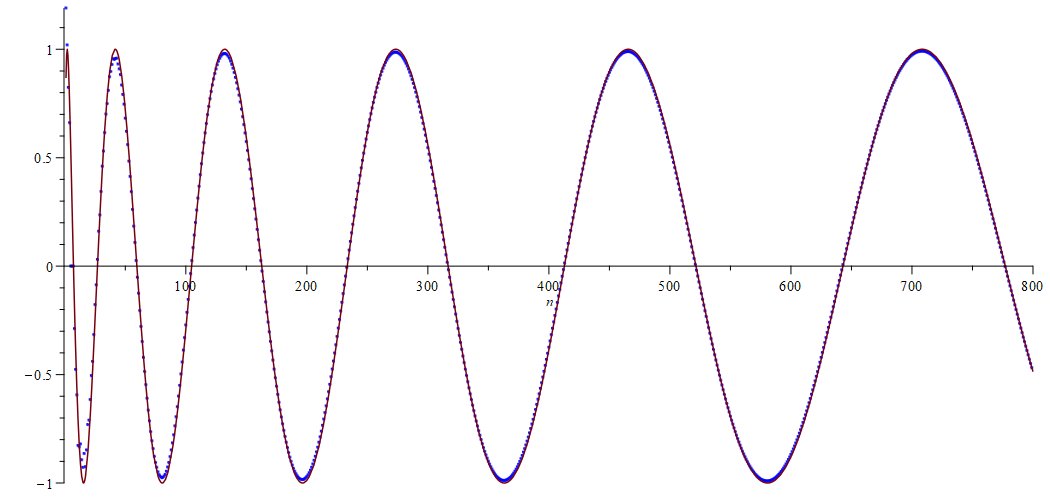}
        \caption{The plot shows the sign changes of $p(1,6,n)-p(5,6,n)$ (in blue dots) and the estimated sign changes (the red line).}
\label{fig:difffigintro}

\end{figure}

\end{example}
The next example refers to higher differences of partition functions. 
\begin{example} Again, we consider the case $b=6$. In the spirit of Theorem \ref{T:differenceswithsets} we want to consider multi-termed differences. To do so, we need two subsets $S_1, S_2 \subset \Z/6\Z$, such that the corresponding nontrivial polynomial $P_{S_1, S_2}(x)$ of degree at most 5 vanishes at as many roots of unity around $x=1$ as possible. We first note that
\begin{align*}
    (x-1)\Phi_6(x) = (x-1)\left(x^2 - x + 1\right) = x^3 - 2x^2 + 2x - 1.
\end{align*}
In this case, we obtain a weighted difference and no subsets can be found for a growth reduction to an exponent induced by $\mathrm{Li}_2(\zeta_3)$. We continue by trying to eliminate also the 3rd roots of unity; i.e., with 
\begin{align*}
    (x-1)\Phi_6(x)\Phi_3(x) = x^5 - x^4 + x^3 - x^2 + x - 1.
\end{align*}
This is exactly the case (4) of Theorem \ref{T:differenceswithsets}. As a result, setting $S_1 := \{1,3,5\}$ and $S_2 := \{0,2,4\}$, we find, as $n \to \infty$,
\begin{align*}
    \sum_{0 \leq a \leq 5} (-1)^{a+1} p(a,6,n) \sim - Q_n(-1) \sim \frac{(-1)^{n+1}}{2(24)^{\frac14} n^{\frac34}} \exp\left( \pi \sqrt{\frac{n}{6}}\right).
\end{align*}
Also note that $\frac{\pi}{\sqrt{6}} = 1.2825 \ldots$ is much smaller then the exponent $2\lambda_1 = 1.6281 \ldots$ in \eqref{E:p6explicit}, compare also \eqref{E:lambda1}. 
\end{example}
Finally, we give an application of Remark \ref{R:differenceremark}.

\begin{example} In light of Equation \eqref{E:phaseshift}, we can choose $r$ such that the cosine becomes a sine on the right hand side. Using Pythagoras' Theorem, and considering the case $b=8$ and $r=2$, we obtain an asymptotic formula without oscillating terms. Indeed, according to Theorem \ref{T:simpledifferences} (4) and Remark \ref{R:differenceremark}, respectively, we obtain for residue classes $a_1 \not= a_2$ that
\begin{align*}
    (p(a_1, 8, n) - p(a_2, 8, n))^2 + (p(a_1+2,8,n) - p(a_2+2,8,n))^2 \sim B_{a_1,a_2,8}^2 n^{-\frac32} \exp\left( 4\lambda_1 \sqrt{n}\right).
\end{align*}
There is no difficulty to extend this type of asymptotic formula for higher differences in the spirit of Theorem \ref{T:differenceswithsets} (1).
\end{example}

\section{Proof of Theorem \ref{T:twistedetaproduct}}\label{proofofmainthm} 
Since the asymptotic formulas for $Q_n(1)$ and $Q_n(-1)$ are well-known, we assume throughout that $1 \leq a < \frac{b}{2}$ with $\gcd(a,b)=1$ and $b \geq 3,$ since $\overline{Q_n(\zeta)} = Q_n(\overline{\zeta})$.   

The setup follows the standard Hardy--Ramanujan circle method, expressing $Q_n(\zeta)$ as a contour integral about 0 and breaking the contour apart with the sequence of Farey fractions of order $N.$  For facts about the Farey sequence, we refer the reader to Chapter 3 of \cite{HardyWright}.  Much of the analysis in this section closely follows \cite{Parry} after assuming the technical Lemmas \ref{L:Esmallkbound} and \ref{L:Elargekbound} which we prove in the next section.

Let $N=\lfloor \delta \sqrt{n} \rfloor$, for some $\delta>0$ to be chosen independently of $n$ and small enough during the course of the proof.  Let $\mathcal{F}_N$ be the sequence of Farey fractions of order $N$ with mediants $\theta_{h,k}'$ and $\theta_{h,k}''$ at $\frac{h}{k}$.  We write \begin{equation}\label{E:tthetadef} t_{\theta}:=t_n- 2 \pi i \theta, \qquad \text{where} \qquad t_n\sqrt{n}:=\frac{\sqrt{\mathrm{Li}_2\left(\zeta_b^{ak_0}\right)}}{k_0},\end{equation} with $k_0 \in \{1,2,3\}$ according to whether we are in case (1), (2) or (3).

  By Cauchy's integral formula, we have
\begin{align*}
    Q_n\left(\zeta_b^a\right)&=\int_{0}^{1} \left(\zeta_b^ae^{-t_n+2\pi i \theta};e^{-t_n+2\pi i \theta}\right)_{\infty}^{-1}e^{nt_n-2 \pi i n \theta} d \theta \\
    &=\sum_{\frac{h}{k} \in \mathcal{F}_N } \zeta_k^{-hn}  \int_{-\theta_{h,k}'}^{\theta_{h,k}''} \exp\left(\frac{\mathrm{Li}_2\left(\zeta_b^{ak}\right)}{k^2t_{\theta}} +nt_{\theta} + E_{h,k}(\zeta_b^a,t_{\theta})\right)d\theta,
\end{align*}
where
\begin{align*}E_{h,k}(z,t):=& \ \Log\left( \left(z \zeta_k^h e^{-t};\zeta_k^h  e^{-t}\right)_{\infty}^{-1}\right)-\frac{\mathrm{Li}_2\left(z^k\right)}{k^2t}.\end{align*}

We will show that the integral(s) where $k=k_0$ dominate, that is they are the {\it major arcs}, and all the other integrals are exponentially smaller, that is they are {\it minor arcs}.  The $E_{h,k}$ will be shown to be error terms on all arcs; the following gives the growth of the $E_{h,k}$ up to $o(1)$ on each of the possible major arcs.

\begin{lemma}\label{L:Efixedkbound}
\textup{(1)} For $2\pi \frac{a}{b} \in (0, \theta_{13})$ and $-\theta_{0,1}' \leq \theta \leq \theta_{0,1}''$, we have
$$
E_{0,1}(\zeta_b^a,t_{\theta})= \Log\left(\omega_{0,1}\left(\zeta_b^a\right)\right)+o(1).
$$
\textup{(2)} For $2\pi \frac{a}{b} \in (\theta_{23}, \pi)$ and $-\theta_{1,2}' \leq \theta \leq \theta_{1,2}''$, we have
$$
E_{1,2}(\zeta_b^a,t_{\theta})= \Log\left(\omega_{1,2}\left(\zeta_b^a\right)\right)+o(1).
$$
\textup{(3)} For $2\pi \frac{a}{b} \in (\theta_{13},\theta_{23})\setminus \left\{\frac{2\pi}{3}\right\}$ and $-\theta_{1,3}' \leq \theta \leq \theta_{1,3}''$, we have
$$
E_{1,3}(\zeta_b^a,t_{\theta})= \Log \left(\omega_{1,3}(\zeta_b^a)\right) + o(1),
$$
and for  $-\theta_{2,3}' \leq \theta \leq \theta_{2,3}''$, we have
$$
 E_{2,3}(\zeta_b^a,t_{\theta})= \Log \left(\omega_{2,3}(\zeta_b^a)\right) + o(1).
$$
\textup{(4)} For $\frac{a}{b}=\frac{1}{3}$ and $-\theta_{1,3}' \leq \theta \leq \theta_{1,3}''$, we have
\begin{align*}
& E_{1,3}(\zeta_3,t_{\theta})=\Log\left(t_{\theta}^{\frac16}\right)+\Log\left( \left(1-\zeta_3^2\right)^{-\frac16}\left(1-\zeta_3\right)^{\frac12} \right)+\log\left(\Gamma\left(\frac{2}{3}\right)\right)\\ & \qquad \hspace{3cm} +\frac{1}{6}\log\left(3\right) -\frac{1}{2}\log(2\pi)+ o(1),
\end{align*} 
and for $-\theta_{2,3}' \leq \theta \leq \theta_{2,3}''$, we have 
\begin{align*}
& E_{2,3}(\zeta_3,t_{\theta})=\Log\left(t_{\theta}^{-\frac16}\right)+\Log\left( \left(1-\zeta_3^2\right)^{\frac16}(1-\zeta_3)^{\frac12} \right)+\log\left(\Gamma\left(\frac{1}{3}\right)\right)\\ & \qquad \hspace{3cm} +\frac{1}{6}\log\left(\frac{1}{3}\right) -\frac{1}{2}\log(2\pi)+ o(1).
\end{align*}
\end{lemma}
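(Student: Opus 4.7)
The plan is to apply the uniform complex Euler--Maclaurin formula (Theorem \ref{T:BMJS}) to the logarithm of the Pochhammer symbol appearing in $E_{h,k}$, after splitting the inner sum by residue classes modulo $k$. Expanding $(z\zeta_k^h e^{-t};\zeta_k^h e^{-t})_\infty=\prod_{m\geq 1}(1-z\zeta_k^{hm}e^{-tm})$ and writing $m=k\ell+j$ with $j\in\{1,\dots,k\}$, I would rewrite
$$E_{h,k}(z,t)+\frac{\mathrm{Li}_2(z^k)}{k^2 t}=-\sum_{j=1}^{k}\sum_{\ell\geq 0}\Log\bigl(1-\alpha_j e^{-tk(\ell+j/k)}\bigr),\qquad \alpha_j:=z\zeta_k^{hj}.$$
For any $j$ with $\alpha_j\neq 1$, the function $f_j(u):=-\Log(1-\alpha_j e^{-u})$ is holomorphic in a cone about $(0,\infty)$ of opening $<\pi/2$ and decays exponentially at infinity, so Theorem \ref{T:BMJS} applies with $w=tk$ and $a=j/k$. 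On the major arcs, $|t_\theta|=O(1/\sqrt n)$ and $|\Arg(t_\theta)|$ is bounded away from $\pi/2$ provided $\delta$ is chosen small enough, so every $O(t_\theta)$ error term below is $o(1)$ uniformly in $\theta$.

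For cases (1), (2), (3) the hypothesis on $2\pi a/b$ ensures that $z=\zeta_b^a$ is not a $k$-th root of unity, so every $\alpha_j\neq 1$. Applying Theorem \ref{T:BMJS} to each $f_j$, computing $\int_0^\infty f_j(u)\,du=\mathrm{Li}_2(\alpha_j)$ via the substitution $v=\alpha_j e^{-u}$, and using $B_1(j/k)=j/k-1/2$, I would obtain
$$\sum_{\ell\geq 0} f_j\bigl(tk(\ell+j/k)\bigr)=\frac{\mathrm{Li}_2(\alpha_j)}{tk}+\Bigl(\frac{j}{k}-\frac{1}{2}\Bigr)\Log(1-\alpha_j)+O(t).$$
Summing over $j$ and invoking the distribution identity $\sum_{j=1}^{k}\mathrm{Li}_2(z\zeta_k^{hj})=\mathrm{Li}_2(z^k)/k$ (valid because $\gcd(h,k)=1$), the leading $1/t$ contributions combine to $\mathrm{Li}_2(z^k)/(k^2t)$ and exactly cancel the corresponding term in $E_{h,k}$. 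The $B_1$ contributions combine, after the index substitution $j\mapsto k-j$ needed to match the $\zeta_k^{-jh}$ convention in the definition of $\omega_{h,k}$, into $\Log\omega_{h,k}(z)$, proving the claim in these cases.

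Case (4) is the main obstacle: when $z=\zeta_3$ and $k=3$, a unique index $j_0\in\{1,2\}$ (namely $j_0=2$ if $h=1$ and $j_0=1$ if $h=2$) yields $\alpha_{j_0}=1$, so $f_{j_0}(u)=-\Log(1-e^{-u})$ is singular at $u=0$ and Theorem \ref{T:BMJS} is inapplicable for that $j$. To handle this piece, I would expand $-\Log(1-x)=\sum_{m\geq 1}x^m/m$ and interchange sums to get
$$\sum_{\ell\geq 0}f_{j_0}\bigl(tk(\ell+j_0/k)\bigr)=\sum_{m\geq 1}\frac{e^{-tm j_0}}{m(1-e^{-tmk})}.$$
Taking the Mellin transform in $tk$ of the right-hand side factors it as $\Gamma(s)\zeta(s+1)\zeta(s,j_0/k)$; shifting the inversion contour past its simple pole at $s=1$ (residue $\pi^2/6$ from $\zeta(s,j_0/k)$) and its double pole at $s=0$ (from $\Gamma(s)\zeta(s+1)$) yields
$$\frac{\pi^2/6}{tk}+\Bigl(\frac{j_0}{k}-\frac{1}{2}\Bigr)\log(tk)+\log\Gamma\Bigl(\frac{j_0}{k}\Bigr)-\frac{1}{2}\log(2\pi)+o(1).$$
Alternatively, the same expansion follows directly from Lemma \ref{L:Bringmannintegral}. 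The leading $\pi^2/(6tk)=\mathrm{Li}_2(1)/(tk)$ exactly restores the distribution identity (since $\mathrm{Li}_2(z^3)=\mathrm{Li}_2(1)=\pi^2/6$ here), so the $1/t$ contributions cancel as before. The term $(j_0/k-1/2)\log t$ produces the $\Log(t_\theta^{\pm 1/6})$ factor (with sign $+$ when $j_0=2$, i.e.\ $h=1$, and $-$ when $j_0=1$, i.e.\ $h=2$), and the remaining constants combine with the non-singular $B_1$-terms from the two regular indices to deliver the explicit $\log\Gamma(\cdot)$, $\pm\tfrac16\log 3$ and $-\tfrac12\log(2\pi)$ contributions in the statement.
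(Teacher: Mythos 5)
Your strategy is genuinely close to the paper's, but packaged differently: the paper (Lemmas \ref{L:Ehkcase1rewrite} and \ref{L:gjksmallbound}) first subtracts the $\frac{1}{w^2}$ and $\left(\frac12-\frac{j}{k}\right)\frac{e^{-jw/k}}{w}$ pieces to form the regularized $g_{j,k}$, so that Theorem \ref{T:BMJS} applies uniformly to every residue class including the singular one, and the singular-class constant is extracted from $\int_0^\infty g_{j_0,k}(w)\,dw$ via Lemma \ref{L:Bringmannintegral}; you instead apply Theorem \ref{T:BMJS} directly to $f_j(u)=-\Log\left(1-\alpha_je^{-u}\right)$ for the regular classes (recovering $\mathrm{Li}_2(z^k)/(k^2t)$ from the distribution identity $\sum_{j=1}^k\mathrm{Li}_2(z\zeta_k^{hj})=\frac1k\mathrm{Li}_2(z^k)$) and treat the singular class by Mellin inversion of $\Gamma(s)\zeta(s+1)\zeta(s,j_0/k)$. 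For the fixed $k\le 3$ relevant here this is a perfectly valid and arguably leaner route, and your explicit constants in case (4) — the $\Log(t_\theta^{\pm 1/6})$, $\log\Gamma(j_0/k)$, $\pm\frac16\log 3$ and $-\frac12\log(2\pi)$ terms, together with the regular-class factors $(1-\zeta_3^2)^{\mp\frac16}(1-\zeta_3)^{\frac12}$ — come out exactly as in the statement.

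There is, however, a concrete error in the final identification in cases (1)--(3). The substitution $j\mapsto k-j$ does \emph{not} convert $\sum_{j=1}^k\left(\frac{j}{k}-\frac12\right)\Log\left(1-z\zeta_k^{hj}\right)$ into $\Log\omega_{h,k}(z)$: it replaces $\zeta_k^{hj}$ by $\zeta_k^{-hj}$ but simultaneously flips the weight $\frac{j}{k}-\frac12$ to $\frac12-\frac{j}{k}$. What your computation actually yields is $\Log\prod_{j=1}^k\left(1-z\zeta_k^{jh}\right)^{\frac{j}{k}-\frac12}$, which with the paper's literal definition $\omega_{h,k}(z)=\prod_{j=1}^k\left(1-z\zeta_k^{-jh}\right)^{\frac{j}{k}-\frac12}$ is $\omega_{k-h,k}(z)$, not $\omega_{h,k}(z)$. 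For $k=1,2$ the two coincide, so cases (1) and (2) are unaffected, but for $k=3$ your argument as written delivers the roles of $\omega_{1,3}$ and $\omega_{2,3}$ interchanged relative to the displayed statement; note, for comparison, that the paper's own proof of case (4) works with the product $\prod_{j=1}^3\left(1-\zeta_3\zeta_3^{j}e^{-jt_\theta}\right)^{-\frac12+\frac{j}{3}}$, i.e.\ with $\zeta_k^{+jh}$, which is what your explicit case-(4) constants reflect. So the analytic content of your proof is fine, but you must not paper over this with the (false) reindexing identity: either carry the constant explicitly as $\prod_{j=1}^k\left(1-\zeta_b^a\zeta_k^{jh}\right)^{\frac{j}{k}-\frac12}$ and then reconcile it carefully with the $\omega_{h,k}$ convention used in Lemma \ref{L:Efixedkbound} and Theorem \ref{T:twistedetaproduct} (3), or prove the needed equality for the specific pairs $(h,k)$ at hand — as stated, that step is wrong for $k=3$.
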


The next two lemmas give uniform bounds on $E_{h,k}$ to be applied on the minor arcs.  The proofs are quite intricate and are provided in the next section. We assume throughout that $0<\varepsilon\leq\frac{1}{4}$.

\begin{lemma}\label{L:Esmallkbound}
Uniformly for $k \leq n^{\varepsilon}$ and $-\theta_{h,k}' \leq \theta \leq \theta_{h,k}''$, we have
$$
E_{h,k}(\zeta_b^a,t_{\theta})=O\left(n^{3\varepsilon-\frac{1}{2}}\right)+O\left(n^{\varepsilon}\right).
$$
\end{lemma}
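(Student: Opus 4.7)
The plan is to isolate the main polylogarithmic contribution by combining a residue-class decomposition with the uniform Euler--Maclaurin summation formula. I would begin by writing
$$\log\!\left(\left(z\zeta_k^h e^{-t};\zeta_k^h e^{-t}\right)_\infty^{-1}\right) = \sum_{\ell=1}^k\sum_{j\geq 0} f_\ell\!\left(tk\!\left(j+\tfrac{\ell}{k}\right)\right),$$
where $f_\ell(x) := -\log(1 - w_\ell e^{-x})$ and $w_\ell := z\zeta_k^{h\ell}$, obtained by substituting $m = jk + \ell$ (with $\ell\in\{1,\dots,k\}$ and $j\geq 0$) into the expansion $-\sum_{m \geq 1} \log(1 - z\zeta_k^{hm}e^{-tm})$. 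Because $\mathrm{Re}(t) = t_n \gg n^{-1/2}$ and $|\mathrm{Im}(t)| \ll 1/(k\sqrt{n})$, the variable $w = tk$ lies in a fixed sector about the positive real axis independently of $k$.

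For each $\ell$ with $w_\ell\neq 1$, the function $f_\ell$ is holomorphic in such a sector (its singularities lie on the imaginary axis) and has exponentially decaying derivatives, so Theorem~\ref{T:BMJS} applies with $w = tk$ and $a = \ell/k$. The integral term evaluates to $\int_0^\infty f_\ell(x)\,dx = \mathrm{Li}_2(w_\ell)$ via the substitution $u = w_\ell e^{-x}$, and using $\gcd(h,k) = 1$ together with the dilogarithm multiplication formula $\mathrm{Li}_2(z^k) = k\sum_{\ell=0}^{k-1}\mathrm{Li}_2(z\zeta_k^\ell)$, the leading terms collapse to $\mathrm{Li}_2(z^k)/(k^2 t)$, cancelling exactly the quantity subtracted in the definition of $E_{h,k}$. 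What remains is
$$E_{h,k}(z,t) = -\sum_{\ell = 1}^k \sum_{n=0}^{N-1}\frac{f_\ell^{(n)}(0)\,B_{n+1}(\ell/k)}{(n+1)!}(tk)^n + O_N\!\left(k(tk)^N\right).$$
Since the $w_\ell\neq 1$ are roots of unity of order dividing $\mathrm{lcm}(b,k)$, one has $|1 - w_\ell| \gg 1/k$ (with implied constant depending only on $b$), whence $|f_\ell^{(n)}(0)| \ll n!/|1-w_\ell|^n \ll n!\,k^n$. Combining with $|tk| \ll n^{\varepsilon - 1/2}$ and $k \leq n^\varepsilon$, the $n = 0$ level contributes $O(k \log k) = O(n^\varepsilon)$, the levels $n \geq 1$ sum geometrically to $O(k^2|t|) = O(n^{2\varepsilon - 1/2}) \subset O(n^{3\varepsilon-1/2})$, and the remainder is negligible upon choosing $N$ sufficiently large.

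The main obstacle is the exceptional index $\ell^*$ --- which exists precisely when $b \mid k$ --- for which $w_{\ell^*} = 1$: the function $f_{\ell^*}(x) = -\log(1-e^{-x})$ has a logarithmic singularity at $x = 0$ and Theorem~\ref{T:BMJS} does not apply directly. I would treat this term separately by rewriting the inner sum as
$$\sum_{j\geq 0} f_{\ell^*}\!\left(tk\!\left(j + \tfrac{\ell^*}{k}\right)\right) = \sum_{r \geq 1} \frac{e^{-rt\ell^*}}{r(1 - e^{-rtk})},$$
and comparing it to Lemma~\ref{L:Bringmannintegral} via a Riemann-sum approximation with step $tk$ (regularizing the $1/x^2$ and $(\tfrac12 - a)/x$ divergences exactly as in the bracket of that lemma). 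This recovers the expected $\mathrm{Li}_2(1)/(k^2 t) = \pi^2/(6k^2 t)$ main term together with a correction of size $\log\Gamma(\ell^*/k) + O(\log(1/(tk))) = O(\log n)$, which is absorbed in $O(n^\varepsilon)$. Keeping all constants uniform in $h$ and $k$, in particular tracking the $|1-w_\ell|^{-n}$ dependence for indices $\ell$ adjacent to $\ell^*$, is the most delicate piece of bookkeeping.
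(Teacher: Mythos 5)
Your route is genuinely different from the paper's: you group the expansion of the logarithm by the part index modulo $k$ and apply Euler--Maclaurin directly to $f_\ell(x)=-\log(1-w_\ell e^{-x})$, collapsing the main terms with the dilogarithm multiplication formula, whereas the paper (Lemma \ref{L:Ehkcase1rewrite}) expands the logarithm into a double series, sorts residues modulo $bk$ and modulo $k$, and applies Theorem \ref{T:BMJS} only to the functions $g_{j,k}(w)=\frac{e^{-jw/k}}{w(1-e^{-w})}-\frac{1}{w^2}-\left(\frac12-\frac{j}{k}\right)\frac{e^{-jw/k}}{w}$, which are holomorphic at $0$ with bounds uniform in the parameter $\frac{j}{k}\in[0,1]$, the leftover product term being controlled by Abel summation against the sums $G_m$ (Lemmas \ref{L:CosSumBound} and \ref{L:Gmaxbound}). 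The paper takes this detour precisely to avoid the step your argument leaves open: Theorem \ref{T:BMJS} is uniform only in the shift $a\in[0,1]$ for a \emph{fixed} function $f$, while your family $f_\ell$ varies with $\ell,h,k$ and degenerates as $w_\ell\to 1$. The nearest singularity of $f_\ell$ lies at distance $\asymp|1-w_\ell|\geq \frac{1}{bk}$ from the origin, so the implicit constant in the remainder $O_N(w^N)$ blows up with $k$, and nothing in the cited theorem says how. Your bound $|f_\ell^{(n)}(0)|\ll n!\,|1-w_\ell|^{-n}$ controls the Bernoulli terms but not the remainder; making the remainder uniform over this degenerating family amounts to re-deriving Euler--Maclaurin with explicit dependence on $f$ along the whole ray, which is exactly the ``delicate bookkeeping'' you defer and is the actual content of the lemma. (Incidentally, by Stirling the $n=0$ level is $O(k)$, since $|1-w_\ell|$ runs essentially over the values $\frac{j}{k}$; the bound $O(k\log k)$ you state is \emph{not} $O(n^{\varepsilon})$ as claimed, though it would still suffice downstream.)

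The exceptional index is also not actually handled. When $b\mid k$, the term $\ell^*$ with $w_{\ell^*}=1$ must produce the main term $\frac{\mathrm{Li}_2(1)}{kt}$ (not $\frac{\mathrm{Li}_2(1)}{k^2t}$ as you write) in order for the total over $\ell$ to equal $\frac{\mathrm{Li}_2(\zeta_b^{ak})}{k^2t}$ and cancel the subtracted term in $E_{h,k}$; and the proposed ``Riemann-sum approximation with step $tk$'' compared against Lemma \ref{L:Bringmannintegral} is precisely where one needs the regularized integrand of that lemma together with a uniform Euler--Maclaurin estimate --- in the paper this is built in from the start through $g_{j_0,k}$ and Lemma \ref{L:gjksmallbound}, producing the explicit $\log\Gamma\left(\frac{j_0}{k}\right)$ contribution of size $O(\log n)$. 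So while your skeleton could plausibly be completed (and would then bypass the $G_m$/Abel-summation machinery for this particular lemma), as written the two crucial steps --- uniformity of the Euler--Maclaurin remainder for the family $f_\ell$ with $w_\ell$ near $1$, and the exceptional term --- are asserted rather than proved, and the second is stated with a constant that would not yield the required cancellation.
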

The next lemma treats the case of large denominators. 
\begin{lemma}\label{L:Elargekbound}
Uniformly for $k \geq n^{\varepsilon}$ and $-\theta_{h,k}' \leq \theta \leq \theta_{h,k}''$, we have
$$
E_{h,k}(\zeta_b^a,t_{\theta})=O\left(N\right).
$$
\end{lemma}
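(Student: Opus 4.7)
The plan is to expand the logarithmic part of $E_{h,k}$ as a double power series, isolate the ``diagonal'' indices $m$ with $k\mid m$ (which will match the polylogarithm term), and bound the ``off-diagonal'' piece via Abel partial summation together with Lemma \ref{L:Gmaxbound}.

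First, I would write
$$
\Log\bigl((\zeta_b^a\zeta_k^he^{-t_\theta};\zeta_k^he^{-t_\theta})_\infty^{-1}\bigr)=\sum_{m\geq 1}\frac{\zeta_b^{am}\zeta_k^{hm}e^{-t_\theta m}}{m\bigl(1-\zeta_k^{hm}e^{-t_\theta m}\bigr)}
$$
and split according to whether $k\mid m$. For the diagonal piece $m=kj$, a termwise comparison gives
$$
\sum_{j\geq 1}\frac{\zeta_b^{akj}e^{-t_\theta kj}}{kj\bigl(1-e^{-t_\theta kj}\bigr)}-\frac{\mathrm{Li}_2(\zeta_b^{ak})}{k^2t_\theta}=\frac{1}{k}\sum_{j\geq 1}\frac{\zeta_b^{akj}}{j}\,f(t_\theta kj),\qquad f(y):=\frac{e^{-y}}{1-e^{-y}}-\frac{1}{y},
$$
where $f$ is bounded on the right half plane (with $f(0)=-\tfrac12$ and exponential decay at $\infty$). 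When $\zeta_b^{ak}=1$ the sum is $O(\log(1/|t_\theta k|))=O(\log n)$; otherwise Abel summation against the bounded partial sums of $\{\zeta_b^{akj}\}_{j\geq 1}$ yields $O(1)$. After dividing by $k\geq n^{\varepsilon}$ the diagonal contribution to $E_{h,k}$ is $O(N)$, and the bare dilogarithm term $|\mathrm{Li}_2(\zeta_b^{ak})|/(k^2|t_\theta|)=O(\sqrt{n}/k^2)=O(n^{1/2-2\varepsilon})$ is also comfortably $O(N)$.

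For the off-diagonal piece I would group by residues $m\equiv r\pmod k$ with $1\leq r\leq k-1$. In each class $\zeta_k^{hm}=\zeta_k^{hr}$ is fixed, and the smooth factor
$$
\phi_r(\ell):=\frac{e^{-t_\theta(k\ell+r)}}{1-\zeta_k^{hr}e^{-t_\theta(k\ell+r)}}
$$
has total variation in $\ell$ controllable (via Lemma \ref{L:DifferenceEstimate}) by a ``head'' contribution of size $O(1/|1-\zeta_k^{hr}|)$ plus an exponentially decaying tail. I would then apply Proposition \ref{P:Abelpartialsummation} to the $\ell$-sum with arithmetic factor $e^{2\pi i(k\ell+r)(a/b+h/k)}/(k\ell+r)$; by orthogonality over $\Z/k\Z$, its partial sums are linear combinations of $G_M\bigl(a/b+hj/k\bigr)$ as $j$ ranges over $\Z/k\Z$. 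Lemma \ref{L:Gmaxbound} supplies the aggregate bound $\sum_j\max_M|G_M(a/b+hj/k)|=O(k)$; combining with the variation estimates and summing over $r$ produces an $O(N)$ total (using $k\leq N$ because $h/k\in\mathcal{F}_N$).

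The \emph{main obstacle} is that the denominators $1-\zeta_k^{hm}e^{-t_\theta m}$ can be as small as $\asymp 1/k$---a phenomenon absent from Parry's setting of $|\zeta|\neq 1$---so no direct termwise bound on the power series can converge. The cancellation needed to obtain the polynomial-in-$\sqrt n$ bound $O(N)$ must come entirely from the phases $\zeta_b^{am}\zeta_k^{hm}=e^{2\pi im(a/b+h/k)}$, and arranging the Abel summation so that Lemma \ref{L:Gmaxbound} applies uniformly across the residue classes modulo $k$ is the technical heart of the argument.
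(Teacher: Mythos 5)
Your decomposition is genuinely different from the paper's (you expand the logarithm with a single index $m$, collapse the full geometric series to get denominators $1-\zeta_k^{hm}e^{-t_\theta m}$, match the dilogarithm only against the diagonal $k\mid m$, and Abel-sum each residue class $m\equiv r\pmod k$), but the final accounting has a genuine quantitative gap. In your off-diagonal step, the per-class bound from Abel summation is (sup of partial sums) $\times$ (variation of $\phi_r$); by your own estimates the partial sums are $\frac1k\sum_j\max_M|G_M(a/b+hj/k)|=O(1)$ per class (Lemma \ref{L:Gmaxbound}), while the variation/head is of size $1/|1-\zeta_k^{hr}|$. Summing over $r$ you then need $\sum_{r=1}^{k-1}|1-\zeta_k^{hr}|^{-1}=\sum_{s=1}^{k-1}\bigl(2\sin(\pi s/k)\bigr)^{-1}\asymp k\log k$, so the method as described yields $E_{h,k}=O(N\log N)$, not $O(N)$; and this loss is not an artifact of sloppiness in the variation estimate, since already $\sum_r|\phi_r(0)|\gtrsim k\log k$, so with absolute values over the residue classes your scheme cannot do better. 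The extra $\log$ is fatal for the statement (and for its use): in the proof of Theorem \ref{T:twistedetaproduct} the minor-arc saving is only $\lambda_1\Delta\sqrt n$ with $\Delta$ a fixed constant, and one needs $E_{h,k}\leq C\delta\sqrt n$ with $C$ independent of $\delta$ so that $\delta$ can be chosen small; a bound $C\delta\sqrt n\log n$ cannot be beaten by any fixed $\delta$. (Two smaller issues: $f(y)=\frac{e^{-y}}{1-e^{-y}}-\frac1y$ decays only like $1/|y|$ in the cone, not exponentially, though your diagonal bound survives; and when $b\mid k$ one of the arguments $a/b+hj/k$ is an integer, where $\max_M|G_M|$ is infinite, so that term must be treated separately via $G_M=H_M=O(\log M)$ over the effective range, as the paper does.)

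The reason the paper gets a clean $O(k)$ is precisely that its rearrangement never produces the resonant denominators you correctly identify as the main obstacle. Writing the log-expansion index modulo $bk$ and the product index modulo $k$, and collapsing the geometric series only over multiples of $k$ of the product index, the dangerous denominator is always $1-e^{-w}$ with $w=t_\theta(bk^2\ell+km)$, and the $-1/w$ coming from the subtracted dilogarithm regularizes it: the resulting kernels $\widetilde g_{j,k}$, $\phi_{j/k}$, $f_1$, $f_2$ are uniformly bounded in the cone with $O(1)$ total variation (Lemmas \ref{SmallRadiusEstimate} and \ref{LargeRadiusEstimate}), so the only growth in $k$ enters through $\sum_j\max_m|G_m(a/b+hj/k)|=O(k)$, plus $O(\log n)$ and $O(n^{1/2-\varepsilon})$ from the $\ell\geq1$ terms. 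To rescue your route you would either have to exhibit cancellation among the residue classes $r$ with small $\min(hr\bmod k,\,k-(hr\bmod k))$, or regroup so that the root of unity is removed from the geometric series before taking absolute values — which is essentially the paper's Lemma \ref{L:Ehkcase2rewrite}.
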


We postpone the proofs of Lemmas \ref{L:Efixedkbound}--\ref{L:Elargekbound} until Section 5.  With these key lemmas in hand, the proof of Theorem \ref{T:twistedetaproduct} follows \cite{Parry} closely.

\begin{proof}[Proof of Theorem \ref{T:twistedetaproduct}]
{\it Cases (1), (2) and (3).} Assume $\frac{a}{b} \neq \frac{1}{3}.$
We write
$$
\lambda_1+i\lambda_2:=\frac{\sqrt{\mathrm{Li}_2(\zeta_b^{ak_0})}}{k_0}.
$$
It follows from the choice $0 < \varepsilon \leq \frac14$, Lemmas \ref{L:Esmallkbound} and \ref{L:Elargekbound} that $E_{h,k}(\zeta_b^a,t_{\theta})=\delta O(\sqrt{n})$ uniformly.  Using this and Lemma \ref{L:Efixedkbound}, we have
\begin{align}
    &e^{-2\lambda_1\sqrt{n}}Q_n\left(\zeta_b^a \right) \nonumber \\
    &=\sum_{\substack{h \\ \frac{h}{k_0} \in \mathcal{F}_N}} \zeta_{k_0}^{-hn}\omega_{h,k_0}(\zeta_b^a)\int_{-\theta_{h,k_0}'}^{\theta_{h,k_0}''} \exp\left(-2\lambda_1\sqrt{n}+\frac{(\lambda_1+i\lambda_2)^2}{t_{\theta}}+nt_{\theta} + o(1)\right)d\theta \nonumber  \\
    & \ \ \ +\sum_{\substack{\frac{h}{k} \in \mathcal{F}_N \\ k \neq k_0}} \zeta_{k}^{-hn}  \int_{-\theta_{h,k}'}^{\theta_{h,k}''} \exp\left(-2\lambda_1\sqrt{n}+\frac{\mathrm{Li}_2\left(\zeta_b^{ak}\right)}{k^2t_{\theta}} +nt_{\theta} + \delta O\left(\sqrt{n}\right)\right)d\theta. \label{E:Q_nmajorandminorarcs}
\end{align}
Recalling \eqref{E:tthetadef}, we rewrite the first term in \eqref{E:Q_nmajorandminorarcs} as
\begin{align*}
    &\sum_{\substack{1 \leq h < k_0 \\ (h,k)=1}}\zeta_{k_0}^{-hn}\omega_{h,k_0}\left(\zeta_b^a \right)\exp\left(i2\lambda_2\sqrt{n}\right) \int_{-\theta_{h,k_0}'}^{\theta_{h,k_0}''} \exp\left( \sqrt{n}\left(\frac{\lambda_1+i\lambda_2}{1-\frac{2\pi i \theta \sqrt{n}}{\lambda_1+i\lambda_2}}-(\lambda_1+i\lambda_2) \right) - 2 \pi in\theta \right) d \theta.
\end{align*}
We can estimate the mediants as $\frac{1}{\sqrt{n}}\ll \theta_{h,k_0}', \theta_{h,k_0}'' \ll \frac{1}{\sqrt{n}}$ and setting $\theta \mapsto \theta n^{-\frac12}$, the above integral is asymptotic to
\begin{equation}\label{E:majorarclaplaceint}
\frac{1}{n^{\frac12}}\int_{-c}^{c}
\exp\left( \sqrt{n}\left(\frac{\lambda_1+i\lambda_2}{1-\frac{2\pi i  \theta}{\lambda_1+i\lambda_2}}-(\lambda_1+i\lambda_2) -2 \pi i \theta\right)  \right) d \theta=:\frac{1}{n^{\frac12}}\int_{-c}^ce^{\sqrt{n}B(\theta)}d\theta,\end{equation}   
for some $c>0$, say.  We claim that we can apply Theorem \ref{laplacemethod} with $x_0 \mapsto 0,$ $A(x) \mapsto 1$ and $B$ as above.  Here, $B(0)=0$ and expanding the geometric series gives
$$
B(\theta)=-\frac{4 \pi^2}{\lambda_1+i\lambda_2}\theta^2+o\left(\theta^2\right), \qquad \theta \to 0,
$$
with $\mathrm{Re}\left(\frac{4 \pi^2}{\lambda_1+i\lambda_2} \right) > 0$.  Finally, we claim that $\mathrm{Re}(B(\theta)) \leq 0$ with equality if and only if $\theta=0.$   Indeed, 
$$
\mathrm{Re}(B(\theta))=\mathrm{Re}\left(\frac{\lambda_1+i\lambda_2}{1-\frac{2\pi i \theta}{\lambda_1+i\lambda_2}}\right)-\lambda_1 
=\frac{|\lambda_1+i\lambda_2|^2}{\lambda_1}\mathrm{Re}\left(\frac{e^{i\psi}}{1+i\left(\frac{\lambda_2}{\lambda_1}-\frac{2\pi \theta}{\lambda_1}\right)} \right)-\lambda_1,
$$
where $\lambda_1+i\lambda_2=\left|\lambda_1+i\lambda_2 \right|e^{i\frac{\psi}{2}}$ and $\psi \in \left(-\frac{\pi}{2}, \frac{\pi}{2}\right).$  Lemma \ref{L:exponentialquotientmax} applies to show
$$
\mathrm{Re}(B(\theta)) \leq \frac{|\lambda_1+i\lambda_2|^2}{\lambda_1}\cos^2\left(\frac{\psi}{2}\right)-\lambda_1=\frac{\lambda_1^2}{\lambda_1}-\lambda_1=0,
$$
with equality if and only if
$$
\Arg\left(1+i\left(\frac{\lambda_2}{\lambda_1}-\frac{2\pi\theta}{\lambda_1}\right)\right)=\frac{\psi}{2}=\Arg\left(\lambda_1+i\lambda_2\right),
$$
thus if and only if $\theta=0,$ as claimed.  Now by Theorem \ref{laplacemethod}, we conclude that \eqref{E:majorarclaplaceint} is asymptotic to
$$
\frac{\sqrt{\lambda_1+i\lambda_2}}{2\sqrt{\pi}n^{\frac34}},
$$
and overall
\begin{align*}
    &\sum_{\substack{1 \leq h < k_0 \\ (h,k)=1}}\zeta_{k_0}^{-hn}\omega_{h,k_0}\left(\zeta_b^a\right) \times \int_{-\theta_{h,k_0}'}^{\theta_{h,k_0}''} \exp\left( -2\lambda_1\sqrt{n} +\frac{(\lambda_1+i\lambda_2)^2}{\frac{\lambda_1+i\lambda_2}{\sqrt{n}}-2 \pi i \theta} +\sqrt{n}(\lambda_1+i\lambda_2)-2 \pi i n\theta \right) d \theta \\
    &\hspace{4cm} \sim \frac{\sqrt{\lambda_1+i\lambda_2}}{2\sqrt{\pi}n^{\frac34}}e^{2i\lambda_2\sqrt{n}}\sum_{\substack{1 \leq h < k_0 \\ (h,k_0)=1}} \zeta_{k_0}^{-hn}\omega_{h,k_0}\left(\zeta_b^a\right).
\end{align*}
When the $e^{2\lambda_1\sqrt{n}}$ is brought back to the right-hand side, this is the right-hand side of Theorem \ref{T:twistedetaproduct}.

For $k\neq k_0$, we follow Parry in Lemma 5.2 of \cite{Parry} and write
\begin{align*}
    \mathrm{Re}\left(\frac{\mathrm{Li}_2\left(\zeta_b^{ak}\right)}{k^2t_{\theta}} + nt_{\theta}\right) &=\lambda_1\sqrt{n}\left(\frac{\left|\mathrm{Li}_2\left(\zeta_b^{ak}\right)\right|}{k^2\lambda_1^2} \mathrm{Re}\left(\frac{e^{i\psi_k}}{1+i\left(\frac{\lambda_2}{\lambda_1}-\frac{2\pi\theta\sqrt{n}}{\lambda_1}\right)}\right)+1\right),
\end{align*}
where $\psi_k$ satisfies $\mathrm{Li}_2(\zeta_b^{ak})=|\mathrm{Li}_2(\zeta_b^{ak})|e^{i\psi_k}.$  Arguing as for the major arcs, the expression $\mathrm{Re}(\cdot)$ above is at most $\cos^2\left(\frac{\psi_k}{2}\right)$.   Now let
$$
\Delta:= \inf_{k \neq k_0}\left(1-\left(\mathrm{Re}\left(\frac{\sqrt{\mathrm{Li}_2(\zeta_b^{ak})}}{k\lambda_1}\right)\right)^2\right) >0.
$$
Then
$$
    \mathrm{Re}\left(\frac{\mathrm{Li}_2(\zeta_b^{ak})}{k^2t_{\theta}} + nt_{\theta}\right) \leq \lambda_1\sqrt{n}\left(\frac{\left|\mathrm{Li}_2\left(\zeta_b^{ak}\right)\right|}{k^2\lambda_1^2}\cos^2\left(\frac{\psi_k}{2}\right)+1\right)\leq \lambda_1 \sqrt{n}(2-\Delta).
$$
Thus,
\begin{align*}
    &\left|\sum_{\substack{\frac{h}{k} \in \mathcal{F}_N \\ k \neq k_0}} \zeta_{k}^{-hn}  \int_{-\theta_{h,k}'}^{\theta_{h,k}''}\exp\left(-2\lambda_1\sqrt{n}+\frac{\mathrm{Li}_2(z^k)}{k^2t_{\theta}} +nt_{\theta} + \delta O(\sqrt{n})\right)d\theta\right| \leq \exp\left(-\lambda_1\Delta \sqrt{n}+ \delta O\left(\sqrt{n}\right) \right).
\end{align*}
We can choose $\delta$ small enough so that the constant in the exponential is negative, and the minor arcs are exponentially smaller than the major arc(s).  This completes the proof of cases (1), (2) and (3).

For case (4), we have $\lambda_1+i\lambda_2=\lambda_1=\frac{\sqrt{\mathrm{Li}_2(1)}}{3}=\frac{\pi}{3\sqrt{6}}$.  Exactly as in case (3) the minor arcs are those with $k \neq 3$, and these are shown to be exponentially smaller than for $k_0=3$.  Thus, by Lemma \ref{L:Efixedkbound} part (4), we have
\begin{align*}
    e^{-2\lambda_1\sqrt{n}}Q_n(\zeta_3) &= \zeta_3^{-n}C_{1,3} \int_{-\theta_{1,3}'}^{\theta_{1,3}''} t_{\theta}^{\frac16} \exp\left(-2\lambda_1\sqrt{n}+\frac{\lambda_1^2}{t_{\theta}}+nt_{\theta}+o(1) \right)d \theta \nonumber \\
    & \quad + \zeta^{-2n}_3C_{2,3} \int_{-\theta_{2,3}'}^{\theta_{2,3}''} t_{\theta}^{-\frac16} \exp\left(-2\lambda_1\sqrt{n}+\frac{\lambda_1^2}{t_{\theta}}+nt_{\theta}+o(1) \right)d \theta, 
\end{align*}
where
\begin{align*}
    C_{1,3}&:=\left(1-\zeta_3^2\right)^{-\frac{1}{6}}(1-\zeta_3)^{\frac{1}{2}}\Gamma\left(\frac{2}{3}\right) \frac{3^{\frac{1}{6}}}{\sqrt{2\pi}}, \hspace{1cm} C_{2,3}&:=\left(1-\zeta_3^2\right)^{\frac{1}{6}}(1-\zeta_3)^{\frac{1}{2}}\Gamma\left(\frac{1}{3}\right) \frac{1}{3^{\frac{1}{6}}\sqrt{2\pi}}.
\end{align*}
Noting
$$
t_{\theta}=\frac{\pi}{3\sqrt{6n}}\left(1-6i\sqrt{6n}\theta\right),
$$
we have
\begin{align}
 \nonumber
    \begin{split}
    &e^{-2\lambda_1\sqrt{n}}Q_n(\zeta_3)
    \end{split}\\
      \nonumber
    \begin{split}
     &=\zeta_3^{-n}C_{1,3}\frac{(\pi)^{\frac16}}{3^{\frac16}(6n)^{\frac{1}{12}}} \int_{-\theta_{1,3}'}^{\theta_{1,3}''} \left(1-6i\sqrt{6n}\theta\right)^{\frac16} \exp\left(-2\lambda_1\sqrt{n}+\frac{\lambda_1^2}{t_{\theta}}+nt_{\theta}+o(1) \right)d \theta 
    \end{split}\\ 
    \begin{split}\label{e13majorarcsint2b} 
& + \zeta^{-2n}_3C_{2,3}\frac{3^{\frac16}(6n)^{\frac{1}{12}}}{(\pi)^{\frac16}} \int_{-\theta_{2,3}'}^{\theta_{2,3}''} \left(1-6i\sqrt{6n}\theta\right)^{-\frac16} \exp\left(-2\lambda_1\sqrt{n}+\frac{\lambda_1^2}{t_{\theta}}+nt_{\theta}+o(1) \right)d \theta.
    \end{split}
\end{align}
Setting $\theta \mapsto \theta n^{-\frac12}$ and arguing as before, both integrals are asymptotic to
$$
\frac{\sqrt{\lambda_1}}{2\sqrt{\pi}n^{\frac34}}=\frac{1}{2^{\frac54}3^{\frac34}n^{\frac34}}.
$$
Hence, the second term in \eqref{e13majorarcsint2b} dominates, and gives the claimed asymptotic formula.
\end{proof}

  \section{Proof of Lemmas \ref{L:Efixedkbound}, \ref{L:Esmallkbound}, and \ref{L:Elargekbound} }\label{minorarcbounds}

We prove Lemma \ref{L:Esmallkbound} first then make use of these ideas in the proof of Lemma \ref{L:Efixedkbound}. We finish the section by proving Lemma \ref{L:Elargekbound}.
\subsection{Proof of Lemma \ref{L:Esmallkbound} } We rewrite $E_{h,k}$ as a sum of two functions: a function to which we can apply Euler--Maclaurin summation, and another to which we apply the tools in Proposition \ref{P:Abelpartialsummation} and Lemma \ref{L:CosSumBound}.
\begin{lemma}\label{L:Ehkcase1rewrite}
For $\mathrm{Re}(t)>0$ and $z=\zeta_b^a,$ we have
\begin{align}
\label{E:Ehkcase2rewrite1} E_{h,k}(z,t)&=\sum_{\substack{1 \leq m \leq bk \\ 1 \leq j \leq k}}  \zeta_b^{ma} \zeta_k^{jmh}kt\sum_{\ell \geq 0} g_{j,k}\left(t(bk^2 \ell + km) \right)
 + \Log \left(\prod_{j=1}^k \left(1-\zeta_b^{a} \zeta_k^{-jh}e^{-jt}\right)^{-\frac{1}{2}+\frac{j}{k}}\right),
\end{align}
  where
  $$
  g_{j,k}(w):=\frac{e^{-\frac{j}{k}w}}{w(1-e^{-w})}-\frac{1}{w^2}-\left(\frac{1}{2}-\frac{j}{k} \right)\frac{e^{-\frac{j}{k}w}}{w}.
  $$
\end{lemma}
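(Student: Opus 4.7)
The plan is to expand $E_{h,k}$ as an explicit double power series, regroup the summation indices by residues mod $k$ and mod $bk$, and then apply a single algebraic splitting built into the definition of $g_{j,k}$. Starting from
\[
\Log\left(\left(z\zeta_k^h e^{-t};\zeta_k^h e^{-t}\right)_{\infty}^{-1}\right) = -\sum_{m\ge 1}\log\!\left(1-z\zeta_k^{hm}e^{-mt}\right) = \sum_{m,n\ge 1}\frac{z^n\zeta_k^{hmn}e^{-mnt}}{n},
\]
I would write $m = k\ell + j$ with $1\le j\le k$ and $\ell\ge 0$, and $n=bkr+s$ with $1\le s\le bk$ and $r\ge 0$. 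Because $z=\zeta_b^a$, this collapses the $\zeta$-factors to $\zeta_b^{as}\zeta_k^{hjs}$, which depend only on $j$ and $s$, and the $\ell$-sum becomes a geometric series that yields
\[
\sum_{\ell\ge 0}e^{-(k\ell+j)(bkr+s)t} \;=\; \frac{e^{-j(bkr+s)t}}{1-e^{-k(bkr+s)t}}.
\]

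Setting $w_r := k(bkr+s)t = t(bk^2r+ks)$, the contribution of a fixed pair $(j,s)$ reduces, after a trivial multiplication by $kt/w_r$, to $\sum_{r\ge 0}\frac{kt\,e^{-(j/k)w_r}}{w_r(1-e^{-w_r})}$. The key algebraic step is the identity
\[
\frac{kt\,e^{-(j/k)w_r}}{w_r(1-e^{-w_r})} \;=\; kt\,g_{j,k}(w_r) \;+\; \frac{1}{kt(bkr+s)^2} \;+\; \left(\tfrac12-\tfrac{j}{k}\right)\frac{e^{-j(bkr+s)t}}{bkr+s},
\]
which is nothing but the definition of $g_{j,k}$ rewritten in terms of $(bkr+s)$. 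This decomposes $E_{h,k}+\mathrm{Li}_2(z^k)/(k^2t)$ into three pieces, and I would analyze each piece separately.

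Piece~1 reproduces the $g_{j,k}$ double sum on the right-hand side of the lemma once one identifies $s\leftrightarrow m$ and $r\leftrightarrow \ell$. For Piece~2 I would use orthogonality: $\sum_{j=1}^{k}\zeta_k^{hjs}=k$ when $k\mid s$ and $0$ otherwise (this is where $\gcd(h,k)=1$ enters). Writing $s=kq$ with $1\le q\le b$ and reindexing $m=br+q$, the resulting series telescopes into $\sum_{m\ge 1}\zeta_b^{akm}/m^2 = \mathrm{Li}_2(z^k)$, multiplied by the correct $1/(k^2t)$ factor, so Piece~2 cancels the $-\mathrm{Li}_2(z^k)/(k^2t)$ built into $E_{h,k}$. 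For Piece~3 I would undo the grouping, letting $n=bkr+s$ run over all positive integers, which turns the $s,r$-double sum into a single power series in $n$ that equals $\sum_{j=1}^k(\tfrac12-\tfrac{j}{k})\sum_{n\ge 1}\zeta_b^{an}\zeta_k^{\pm jhn}e^{-jnt}/n$, and this is precisely the power-series expansion of $\Log\prod_{j=1}^k(1-\zeta_b^a\zeta_k^{-jh}e^{-jt})^{-1/2+j/k}$.

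The only real obstacle is bookkeeping: one must justify each rearrangement of absolutely convergent double series in the region $\mathrm{Re}(t)>0$ (this is routine given the exponential decay) and keep strict track of signs so that the fractional exponents $-\tfrac12+\tfrac{j}{k}$ in the product match the coefficients $\tfrac12-\tfrac{j}{k}$ produced by the splitting of $g_{j,k}$, and so that the $\zeta_k^{hjs}$ factors line up with the $\zeta_k^{-jh}$ appearing in the finite product (this is where the sign convention in the definition of $E_{h,k}$ must be respected). No analytic estimate is needed; the lemma is a formal identity once the series manipulation is carried out carefully.
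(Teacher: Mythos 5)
Your proposal is correct and takes essentially the same route as the paper's proof of this lemma: expand $\Log\left(\left(z\zeta_k^he^{-t};\zeta_k^he^{-t}\right)_\infty^{-1}\right)$ as a double Taylor series, split the two indices into residue classes modulo $bk$ and modulo $k$, sum the geometric series, and use the three-term definition of $g_{j,k}$ so that the middle term yields $\mathrm{Li}_2\left(z^k\right)/(k^2t)$ by orthogonality in $j$ (where $\gcd(h,k)=1$ enters) while the last term recombines into the logarithm of the finite product. The sign bookkeeping you flag is indeed the only delicate point (the expansion naturally produces $\zeta_k^{+jh}$, exactly as in the paper's own computation of $\zeta_{bk}^{m(ka+bjh)}$); aside from making that identification explicit, your argument coincides with the paper's.
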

\begin{proof}
In $E_{h,k}$, we expand the logarithm using its Taylor series as
  \begin{align*}
    \Log\left(\zeta_b^{a} \zeta_k^{h}e^{-t}; \zeta_k^{h}e^{-t}\right)_{\infty}^{-1}
    &=\sum_{\substack{\nu \geq 1 \\ \ell \geq 1}} \frac{ \zeta_b^{\ell a} \zeta_k^{\ell \nu h}e^{-\ell \nu t}}{\ell} =\sum_{\substack{1 \leq j \leq k \\ 1 \leq m \leq bk}}\sum_{\substack{\nu \geq 0 \\ \ell \geq 0}} \frac{\zeta_{bk}^{m(ka+bjh)}e^{-(bk\ell+m) (\nu k + j)t}}{bk\ell+m} \\
     &=\sum_{\substack{1 \leq j \leq k \\ 1 \leq m \leq bk}}\zeta_{b}^{ma}\zeta_{k}^{mjh}\sum_{\ell \geq 0} \frac{e^{-jt(bk\ell+m)}}{(bk\ell+m)(1-e^{-t(bk^2\ell+km)})}.
  \end{align*}
  This corresponds to the left term in $g_{j,k}.$
  For the middle term in $g_{j,k}$, we compute (using $\gcd(h,k)=1$)
 $$
      \sum_{\substack{1 \leq j \leq k \\ 1 \leq m \leq bk}}\zeta_{b}^{ma}\zeta_{k}^{mjh}kt\sum_{\ell \geq 0} \frac{1}{t^2(bk^2\ell +km)^2}  =\frac{k^2}{t}\sum_{\substack{1 \leq m \leq bk \\ m \equiv 0 \pmod{k}}}\sum_{\ell \geq 0} \zeta_{b}^{ma}\frac{1}{(bk^2\ell+mk)^2}  =\frac{1}{tk^2} \mathrm{Li}_2\left(\zeta_{b}^{ka} \right).
$$
  
  Finally, it is simple to show that the logarithm of the product in \eqref{E:Ehkcase2rewrite1} cancels with the sum of the right term in $g_{j,k}$, simply by expanding the logarithm into its Taylor series.
\end{proof}

We estimate the first term in \eqref{E:Ehkcase2rewrite1} using Euler--Maclaurin summation.  First we need a technical definition.  Since $\textnormal{gcd}(h,k) =1$, there is at most one $j_0$ in the sum in \eqref{E:Ehkcase2rewrite1} for which $\zeta_{b}^{a}\zeta_{k}^{j_0h}=1;$ i.e., such that $ak+bj_0h\equiv 0 \pmod{bk}.$ Define
$$
\mathcal{S}_{a,b}:= \{(h,k) \in \mathbb{N}^2 : \text{there exists $j_0 \in [1,k]$ with $ak + j_0b h \equiv 0 \pmod{bk}$}\}.
$$



\begin{lemma}\label{L:gjksmallbound} Let $j_0$ be as above.  For $k \leq n^{\varepsilon}$ and $-\theta_{h,k}' \leq \theta \leq \theta_{h,k}''$, we have
  \begin{align*}
  &\sum_{\substack{1 \leq m \leq bk \\ 1 \leq j \leq k}}  \zeta_b^{ma} \zeta_k^{jmh}kt_{\theta}\sum_{\ell \geq 0} g_{j,k}\left(t_{\theta}(bk^2 \ell + km) \right)\\&=\left(\log \left(\Gamma\left(\frac{j_0}{k}\right)\right)+\left(\frac{1}{2}-\frac{j_0}{k}\right)\log\left(\frac{j_0}{k}\right)-\frac{1}{2}\log(2 \pi)\right)1_{(h,k) \in \mathcal{S}_{a,b}} +O\left(\frac{k^3}{\sqrt{n}}\right)+O\left(\frac{k^5}{n}\right).
  \end{align*}
\end{lemma}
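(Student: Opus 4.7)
The plan is to apply the uniform complex Euler--Maclaurin formula (Theorem \ref{T:BMJS}) to each inner sum over $\ell$, and then use orthogonality of roots of unity in the outer double sum over $(j,m)$ to collapse the main term to the unique contribution from $j = j_0$ when it exists.

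Set $w := t_\theta b k^2$ and $\alpha_m := m/(bk) \in (0,1]$, so the inner sum becomes $\sum_{\ell \geq 0} g_{j,k}(w(\ell + \alpha_m))$. First I would verify the hypotheses of Theorem \ref{T:BMJS}: the only singularities of $g_{j,k}$ are simple poles at $2\pi i\mathbb{Z}\setminus\{0\}$, while the would-be pole at $w = 0$ is removable by design (the $1/w^2$ and $(1/2 - j/k)e^{-(j/k)w}/w$ subtractions cancel it exactly), so $g_{j,k}$ is holomorphic in a cone $\{|\Arg(z)| \leq \theta_0\}$ for some $\theta_0 < \pi/2$; moreover, $g_{j,k}^{(n)}(w) \ll |w|^{-2-n}$ as $|w| \to \infty$ in the cone because of the exponential decay of $e^{-w}$ and $e^{-(j/k)w}$. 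The argument $t_\theta = t_n - 2\pi i \theta$ also lies in such a cone uniformly: $\mathrm{Re}(t_n) = \lambda_1/\sqrt{n}$, while $|\theta| \ll 1/(k\sqrt{n})$ on the Farey arc. Applying Theorem \ref{T:BMJS} with $N = 1$ yields
\begin{align*}
\sum_{\ell \geq 0} g_{j,k}(w(\ell+\alpha_m))  = \frac{1}{w}\int_0^\infty g_{j,k}(x)\,dx - g_{j,k}(0)\, B_1(\alpha_m) + O(w),
\end{align*}
uniformly in $\alpha_m\in[0,1]$ and in $j/k \in (0,1]$.

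Next I multiply by $k t_\theta\, \zeta_{bk}^{m(ak+bjh)}$ and sum over $1 \leq j \leq k$ and $1 \leq m \leq bk$. For the main term, the prefactor collapses as $k t_\theta \cdot \frac{1}{w} = \frac{1}{bk}$, and the orthogonality relation
\begin{align*}
\sum_{m=1}^{bk} \zeta_{bk}^{m(ak+bjh)} = bk \cdot \mathbf{1}_{ak+bjh \equiv 0 \pmod{bk}}
\end{align*}
selects the single index $j = j_0$ precisely when $(h,k)\in\mathcal{S}_{a,b}$, giving $\int_0^\infty g_{j_0,k}(x)\,dx$, which Lemma \ref{L:Bringmannintegral} identifies with $\log\Gamma(j_0/k) + (1/2 - j_0/k)\log(j_0/k) - (1/2)\log(2\pi)$; when $(h,k)\notin\mathcal{S}_{a,b}$, the main term vanishes identically. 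For the error, the $B_1$-correction is bounded trivially by $|k t_\theta| \cdot |g_{j,k}(0)| \cdot |B_1(\alpha_m)| = O(k/\sqrt{n})$ per pair $(j,m)$, yielding the stated $O(k^3/\sqrt{n})$ after summing over the $bk^2$ pairs. The Euler--Maclaurin remainder contributes $|k t_\theta| \cdot O(|w|) = O(k^3/n)$ per pair, totaling $O(k^5/n)$.

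The principal technical obstacle is securing uniformity of the Euler--Maclaurin expansion in \emph{both} the shift parameter $\alpha_m \in [0,1]$ (built into the statement of Theorem \ref{T:BMJS}) and the auxiliary parameter $j/k \in (0,1]$ appearing inside $g_{j,k}$ itself. This reduces to showing that $g_{j,k}^{(n)}(0)$ and the implicit constants in the large-$|w|$ decay of $g_{j,k}^{(n)}$ are bounded uniformly over $j/k \in (0,1]$, which follows from the explicit Taylor expansion (for instance $g_{j,k}(0) = \tfrac{1}{12} - \tfrac{j^2}{2k^2}$) together with the uniform exponential decay of $e^{-(j/k)w}$ and $e^{-w}$ along any fixed cone in the right half-plane.
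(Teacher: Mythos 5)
Your proposal is correct and follows essentially the same route as the paper: apply Theorem \ref{T:BMJS} to the inner sum over $\ell$, use orthogonality of $\zeta_{bk}^{m(ak+bjh)}$ over $m$ to isolate the $j_0$ contribution (when $(h,k)\in\mathcal{S}_{a,b}$), identify the main term via Lemma \ref{L:Bringmannintegral}, and bound the $B_1$-term and Euler--Maclaurin remainder trivially over the $O(k^2)$ pairs $(j,m)$ to get $O\left(\frac{k^3}{\sqrt{n}}\right)+O\left(\frac{k^5}{n}\right)$. Your explicit attention to uniformity in $j/k$ (and the value $g_{j,k}(0)=\frac{1}{12}-\frac{j^2}{2k^2}$) matches what the paper uses implicitly, so there is nothing to add.
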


\begin{proof}
Note that the function $g_{j,k}(w)$ is holomorphic at 0 and in any cone $|\Arg(w)| \leq \frac{\pi}{2} - \eta.$  Also, $\theta_{h,k}', \theta_{h,k}'' \leq \frac{1}{kN}=O\left(\frac{1}{\sqrt{n}}\right)$ implies that $t_{\theta}$ lies in such a fixed cone (see also \cite{andrewsbook} on p. 75).  Thus, we can apply Theorem \ref{T:BMJS} to $g_{j,d}(z)$ with $w\mapsto t_{\theta}bk^2$, $a\mapsto \frac{m}{bk},$ and $N \mapsto 0$,
  \begin{align*}
  \sum_{\ell \geq 0} g_{j,k}\left(t_{\theta}bk^2\left(\ell+\frac{m}{bk}\right)\right)&=\frac{1}{t_{\theta}bk^2}\int_0^{\infty} g_{j,k}(w)dw-\left(\frac{1}{12}-\frac{j^2}{2k^2}\right)\left(\frac{1}{2}-\frac{m}{bk}\right)+O\left(\frac{k^2}{\sqrt{n}}\right).
  \end{align*}
  When summing the $O$-term, we get
  $$
  \sum_{\substack{1 \leq m \leq bk \\ 1 \leq j \leq k}} kt_{\theta} O\left(\frac{k^2}{\sqrt{n}}\right)=O\left(\frac{k^5}{n} \right),
  $$
  where we used the fact that $t_{\theta}$ lies in a cone. Summing first over $m$ gives
  $$
  \sum_{1 \leq m \leq bk} \zeta_{b}^{ma} \zeta_k^{jmh}\left(\frac{1}{12}-\frac{j^2}{2k^2}\right)\left(\frac{1}{2}-\frac{m}{bk}\right)= O\left(k\right).
  $$
  
  Hence,
  \begin{align*}
      &\sum_{\substack{1 \leq m \leq bk \\ 1 \leq j \leq k}}  \zeta_b^{ma} \zeta_k^{jmh}kt_{\theta}\sum_{\ell \geq 0} g_{j,k}\left(t_{\theta}(bk^2 \ell + km) \right)\\
      &=\left(\int_0^{\infty} g_{j_0,k}(w)dw\right)1_{(h,k) \in \mathcal{S}_{a,b}}+O\left(\frac{k^3}{\sqrt{n}}\right) +O\left(\frac{k^2}{\sqrt{n}}\right)+O\left(\frac{k^5}{n}\right) \\
      &=\left(\log \Gamma\left(\frac{j_0}{k}\right)+\left(\frac{1}{2}-\frac{j_0}{k}\right)\log\left(\frac{j_0}{k}\right)-\frac{1}{2}\log(2 \pi)\right)1_{(h,k) \in \mathcal{S}_{a,b}} +O\left(\frac{k^3}{\sqrt{n}}\right)+O\left(\frac{k^5}{n}\right),
  \end{align*}
where the last step follows by  by Lemma \ref{L:Bringmannintegral}. If $\varepsilon <\frac{1}{4},$ then the terms $O\left(\frac{k^3}{\sqrt{n}}\right)$ and $O\left(\frac{k^5}{n}\right)$ are smaller than $\sqrt{n}$ as needed since $N = O(\delta \sqrt{n})$ where $\delta$ is chosen small and independently of $n$.
\end{proof}
It remains to estimate the product term in \eqref{E:Ehkcase2rewrite}.
\begin{lemma}\label{L:TheProduct}
  Uniformly for $k \leq n^{\varepsilon}$ and $-\theta_{h,k} \leq \theta \leq \theta_{h,k}''$, we have
  $$
  \Log\left(\prod_{j=1}^k \left(1-\zeta_b^{a} \zeta_k^{-jh}e^{-jt_{\theta}}\right)^{-\frac{1}{2}+\frac{j}{k}}\right) = O\left(n^{\varepsilon}\right).
  $$
\end{lemma}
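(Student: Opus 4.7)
The plan is to expand
\[
\Log\left(\prod_{j=1}^k (1 - w_j)^{-1/2 + j/k}\right), \qquad w_j := \zeta_b^a \zeta_k^{-jh} e^{-jt_\theta},
\]
and reduce to bounding $\sum_j |\log|1-w_j||$. Since the exponents $j/k - 1/2$ are real, the real part of the above $\Log$ equals $\sum_{j=1}^k (j/k - 1/2) \log|1 - w_j|$, while the imaginary part (an argument) is automatically bounded by $\pi$; so it suffices to control that sum, absorbing the bounded coefficient $|j/k - 1/2| \leq 1/2$ into constants.

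Writing $w_j = e^{-jt_n + 2\pi i \nu_j}$ where $\nu_j$ is the fractional part of $a/b - jh/k + j\theta$ taken in $[-1/2, 1/2]$, I would use the identity
\[
|1 - w_j|^2 = (1 - e^{-jt_n})^2 + 4 e^{-jt_n} \sin^2(\pi \nu_j).
\]
The positive part $\log|1 - w_j| \leq \log 2$ is automatic from $|w_j| \leq 1$ and contributes only $O(k) = O(n^\varepsilon)$, so only the negative part needs bounding. Since $\gcd(h,k)=1$, the values $\mu_j := (a/b - jh/k) \bmod 1$ will form $k$ equally spaced points on $\mathbb{R}/\mathbb{Z}$ with spacing $1/k$, and at most one of them can vanish, at some index $j_0$. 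The perturbation $|\nu_j - \mu_j| \leq |j\theta| \leq 1/N \ll 1/\sqrt{n}$ is dominated by the generic lower bound $1/(bk) \geq n^{-\varepsilon}/b$ on the distance from $\mu_j$ to $\mathbb{Z}$ for $j \neq j_0$, provided $\varepsilon < 1/2$.

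For the isolated index $j_0$ (if it exists), the first term alone yields $|1 - w_{j_0}| \geq 1 - e^{-j_0 t_n} \gtrsim j_0/\sqrt{n}$ (using $t_n \asymp 1/\sqrt{n}$), giving $|\log|1 - w_{j_0}|| \ll \log n \ll n^\varepsilon$. For each remaining $j$, the distance from $\nu_j$ to $\mathbb{Z}$ will be at least $1/(2bk)$, so $|1 - w_j| \gtrsim |\sin(\pi \nu_j)| \gtrsim 1/k$ and $|\log|1 - w_j|| \ll \log k$. To sum these contributions, I would sort the $k-1$ (or $k$) nonzero distances in increasing order; by equidistribution the $\ell$-th smallest is $\gtrsim \ell/(2bk)$, and Stirling's formula then gives
\[
\sum_{j \neq j_0} |\log|1 - w_j|| \ll \sum_{\ell=1}^k \log(k/\ell) \ll k \ll n^\varepsilon.
\]
Combining with the $j_0$ contribution yields the claim. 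The hard part will be carefully quantifying the equidistribution of the $\mu_j$'s and verifying that $|j\theta|$ is genuinely dominated by the spacing $1/(bk)$, both following uniformly from the Farey arc bound $|\theta| \leq 1/(kN) = O(1/(k\sqrt{n}))$ under the standing assumption $\varepsilon \leq 1/4$.
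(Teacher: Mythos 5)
Your overall strategy (bounding $\sum_j |\log|1-w_j||$ via the spacing of the phases $a/b-jh/k$ modulo $1$ and a Stirling-type summation of $\log(k/\ell)$) is genuinely different from the paper's proof, which instead expands the logarithm into its Taylor series and applies Abel partial summation against the partial sums $G_m$, using Lemmas \ref{L:CosSumBound} and \ref{L:Gmaxbound}. However, as written your argument has a concrete gap: you treat $t_n$ as if it were real. By \eqref{E:tthetadef}, $t_n\sqrt{n}=\sqrt{\mathrm{Li}_2(\zeta_b^{ak_0})}/k_0=\lambda_1+i\lambda_2$ with $\lambda_2\neq 0$ in general, so $w_j=\zeta_b^a\zeta_k^{-jh}e^{-jt_\theta}$ has modulus $e^{-j\lambda_1/\sqrt{n}}$ and phase $2\pi\bigl(\tfrac{a}{b}-\tfrac{jh}{k}+j\theta\bigr)-\tfrac{j\lambda_2}{\sqrt{n}}$; your identity $|1-w_j|^2=(1-e^{-jt_n})^2+4e^{-jt_n}\sin^2(\pi\nu_j)$ is false for complex $t_n$, and after the correction the phase perturbation is not $O(|j\theta|)=O(1/N)$ but of size up to $j|\lambda_2|/(2\pi\sqrt{n})\asymp k n^{-1/2}=n^{\varepsilon-1/2}$. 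Since the standing assumption allows $\varepsilon$ up to $\tfrac14$, this is comparable to (and, for large $b$, larger than) the spacing $\tfrac{1}{bk}\geq n^{-\varepsilon}/b$; your claim that the perturbation is dominated by the spacing ``provided $\varepsilon<1/2$'' is therefore wrong, and with it the uniform lower bound $|1-w_j|\gg 1/k$ for all $j\neq j_0$.

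The gap is repairable within your framework, which is why I would call this a fixable error rather than a dead end: the perturbation is linear in $j$ with slope $|\beta|\leq \tfrac{1}{kN}+\tfrac{|\lambda_2|}{2\pi\sqrt{n}}$, and since the unperturbed distances to $\Z$ are (up to pairing) an arithmetic progression with gap $\tfrac1k$, the number of indices $j$ whose perturbed phase lies within $\tfrac{1}{2bk}$ of $\Z$ is $O(1+k^2|\beta|)=O(1+n^{2\varepsilon-\frac12})=O(1)$ for $\varepsilon\leq\tfrac14$; for those finitely many $j$ one uses $|1-w_j|\geq 1-e^{-j\lambda_1/\sqrt{n}}\gg n^{-\frac12}$, contributing $O(\log n)$ each, while the remaining indices obey your sorted-distance bound and give $O(k)$. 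Two further small points: the mediants satisfy $\theta_{h,k}',\theta_{h,k}''\leq \tfrac{1}{kN}$, so $|j\theta|\leq \tfrac1N$ as you say, but this must be combined with the $\mathrm{Im}(t_n)$ term as above; and the quantity in the lemma is used in Lemma \ref{L:Ehkcase1rewrite} as $\sum_{j=1}^k\bigl(\tfrac{j}{k}-\tfrac12\bigr)\Log(1-w_j)$, so its imaginary part is a weighted sum of arguments, bounded by $O(k)$ because $\mathrm{Re}(1-w_j)>0$, not ``an argument bounded by $\pi$'' --- harmless for the conclusion, but the justification should be stated that way.
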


    \begin{proof}
    Recall the sums $G_m$ defined in Lemma \ref{L:CosSumBound}.  Using the Taylor expansion for the logarithm followed by Proposition  \ref{P:Abelpartialsummation}, we write
    \begin{align*}
    \left|\Log \left(\prod_{j=1}^k \left(1-\zeta_b^{a} \zeta_k^{-jh}e^{-jt_{\theta}}\right)^{-\frac{1}{2}+\frac{j}{k}}\right) \right|&=
     \left|\sum_{j=1}^k\left(\frac{1}{2}-\frac{j}{k}\right)\sum_{m \geq 1} \frac{ \zeta_{bk}^{m(ak+bjh)}}{m}e^{-jmt_{\theta}} \right|\\&=\left|\sum_{j=1}^k\left(\frac{1}{2}-\frac{j}{k}\right)(1-e^{-jt_{\theta}})\sum_{m \geq 1} G_m\left(\frac{ak+bhj}{bk}\right)e^{-mjt_{\theta}}\right|.
    \end{align*}
    It is elementary to show that at most one of $\{ak+bjh\}_{1 \leq j \leq k}$ is divisible by $bk$.  Suppose that this happens at $j_0$ (if it never happens, then the argument is similar).  Then $G_m\left(\frac{ak+bhj_0}{bk}\right)=H_m$, the $m$-th harmonic number.  Applying the formula $\sum_{m \geq 1} H_mx^m=\frac{-\log(1-x)}{1-x}$, the above is

    \begin{align*}
    &\ll \left|1-e^{-j_0t_{\theta}}\right|\sum_{m \geq 1} H_m e^{-mj_0\mathrm{Re}(t_{\theta})}+ \sum_{\substack{1 \leq j \leq k \\ j \neq j_0}} \left|1-e^{-jt_{\theta}}\right|\max_{m \geq 1} \left| G_m\left(\frac{a}{b}+\frac{hj}{k}\right) \right|\sum_{m \geq 1} e^{-mj\mathrm{Re}(t_{\theta})} \\
    &\ll \left|\log\left(1-e^{-j_0t_{\theta}}\right)\right|\frac{|1-e^{-j_0t_{\theta}}|}{1-e^{-j_0\mathrm{Re}(t_{\theta})}}+ \sum_{\substack{1 \leq j \leq k \\ j \neq j_0}} \frac{\left|1-e^{-jt_{\theta}}\right|}{1-e^{-j\mathrm{Re}(t_{\theta})}}\max_{m \geq 1} \left| G_m\left(\frac{a}{b}+\frac{hj}{k}\right) \right|.
    \end{align*}
    The fact that $t_{\theta}$ lies in a cone $|\Arg(t_{\theta})|\leq \frac{\pi}{2}-\eta$ with $j \leq k \leq n^{\varepsilon}<\sqrt{n}$ gives
    $$\frac{\left|1-e^{-jt_{\theta}}\right|}{1-e^{-j\mathrm{Re}(t_{\theta})}}= O \left( \frac{|t_{\theta}|}{\mathrm{Re}(t_{\theta})}\right) =O(1).$$
     Thus, using Lemma \ref{L:Gmaxbound}
    \begin{align*}
    \left|\Log \left(\prod_{j=1}^k \left(1-\zeta_b^{a} \zeta_k^{-jh}e^{-jt_{\theta}}\right)^{-\frac{1}{2}+\frac{j}{k}}\right) \right|&=O(\log( n))+O\left(\sum_{\substack{1 \leq j \leq k \\ j \neq j_0}} \max_{m \geq 1} \left|G_m\left(\frac{a}{b}+\frac{hj}{k}\right)\right|\right) \\ &=O(\log (n))+O(k)=O(n^{\varepsilon}),
    \end{align*}
    as claimed.
   \end{proof}
   
   Lemma \ref{L:Esmallkbound} now follows from Lemmas \ref{L:Ehkcase1rewrite}, \ref{L:gjksmallbound} and  \ref{L:TheProduct}  by recalling that $k\leq n^{\varepsilon}$ where $0 < \varepsilon \leq \frac{1}{4}$:
   \begin{align*}
       E_{h,k}(\zeta^a_b,t_\theta) &= \left(\log \left(\Gamma\left(\frac{j_0}{k}\right)\right)+\left(\frac{1}{2}-\frac{j_0}{k}\right)\log\left(\frac{j_0}{k}\right)-\frac{1}{2}\log(2 \pi)\right)1_{(h,k) \in \mathcal{S}_{a,b}}\\ &\hspace{5mm}+O\left(\frac{k^3}{\sqrt{n}}\right)+O\left(\frac{k^5}{n}\right)+ O\left(n^\varepsilon\right)\\
       &\ll \log(n) + n^{3\varepsilon-\frac{1}{2}}+n^\varepsilon = O\left(n^{3\varepsilon-\frac{1}{2}}\right)+ O\left(n^\varepsilon\right).
   \end{align*}
   
   \subsection{Proof of Lemma  \ref{L:Efixedkbound}} To prove Lemma \ref{L:Efixedkbound}, we need an elementary fact about the sets $\mathcal{S}_{a,b}$.
   \begin{lemma}\label{L:Sabknonempty}
     Let $1 \leq a < \frac{b}{2}$ with $\gcd(a,b)=1$ and $b \geq 3$.  Then $(1,1)\not \in \mathcal{S}_{a,b}$, $(h,2)\not\in \mathcal{S}_{a,b}$, and $(h,3)\in \mathcal{S}_{a,b}$ if and only if $(a,b) =(1,3)$.
   \end{lemma}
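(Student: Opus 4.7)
\textbf{Proof plan for Lemma \ref{L:Sabknonempty}.}

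The plan is to read off the condition defining $\mathcal{S}_{a,b}$ modulo $b$ and modulo $k$ for each of the three values $k\in\{1,2,3\}$, and use that $\gcd(a,b)=1$ forces strong divisibility constraints on $b$. Recall that $(h,k)\in\mathcal{S}_{a,b}$ means there exists $j_0\in[1,k]$ with $ak+bj_0h\equiv 0\pmod{bk}$. Reducing this congruence modulo $b$ kills the $bj_0h$ term and leaves $ak\equiv 0\pmod b$, so that $b\mid ak$; since $\gcd(a,b)=1$, this forces $b\mid k$.

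First, for $(1,1)$ we get $b\mid 1$, contradicting $b\ge 3$; hence $(1,1)\notin\mathcal{S}_{a,b}$. For $(h,2)$ we get $b\mid 2$, again impossible since $b\ge 3$; hence $(h,2)\notin\mathcal{S}_{a,b}$. For $(h,3)$ we get $b\mid 3$, and combined with $b\ge 3$ this forces $b=3$. The constraints $1\le a<\frac{b}{2}=\frac{3}{2}$ and $\gcd(a,b)=1$ then pin $a=1$, so if $(h,3)\in\mathcal{S}_{a,b}$ then necessarily $(a,b)=(1,3)$.

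For the converse, I would verify that when $(a,b)=(1,3)$, the condition can indeed be satisfied for each $h$ coprime to $3$. With these values the congruence $ak+bj_0h\equiv 0\pmod{bk}$ reduces to $3+3j_0 h\equiv 0\pmod 9$, i.e.\ $j_0 h\equiv -1\equiv 2\pmod 3$. Since $h\not\equiv 0\pmod 3$ is invertible modulo $3$, there is a unique residue $j_0\pmod 3$ solving this, and in each case the representative lies in $\{1,2\}\subset[1,3]$. This gives the desired $j_0$ and shows $(h,3)\in\mathcal{S}_{1,3}$.

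There is no real obstacle here; the lemma is pure modular bookkeeping, and the only point to be careful about is ensuring the solution $j_0$ falls in the prescribed range $[1,k]$ rather than merely existing as a residue class, which is automatic for $k=3$.
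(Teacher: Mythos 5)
Your proof is correct. It takes a slightly different (and more uniform) route than the paper: you reduce the defining congruence $ak+bj_0h\equiv 0\pmod{bk}$ modulo $b$, so that $\gcd(a,b)=1$ forces $b\mid k$, which dispatches $k=1,2,3$ simultaneously and immediately pins down $(a,b)=(1,3)$ in the case $k=3$. The paper instead argues case by case: for $(h,2)$ it lists the two possible values $2a+b$ and $2a+2b$, rules out divisibility by $2b$ via the size bound $2a+b<3b<2\cdot(2b)$, and declares the remaining cases analogous; the converse for $(1,3)$ is never spelled out there but is implicit in the later explicit computation of $j_0$ in the proof of Lemma \ref{L:Efixedkbound}. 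Your version buys a single argument covering all three values of $k$ at once, plus an explicit verification of the converse, where you correctly note the (contextually automatic) hypothesis $\gcd(h,3)=1$ and that the solution $j_0$ lands in $\{1,2\}\subset[1,3]$; the paper's version is marginally more self-contained in that it does not even need the observation $b\mid ak\Rightarrow b\mid k$, only crude inequalities. Either way the content is elementary modular bookkeeping and your write-up has no gaps.
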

   \begin{proof}
   We prove the case $(h,k) = (h,2)$ and note that the remaining cases are analogous. We have that $ (h,2) \in S_{a,b}$   if and only if $2b$ divides $2a+b$ or $2a+2b$.  Clearly, $2b \nmid (2a+2b)$, and since $2a+b < 3b < 2 \cdot (2b)$,
   $$
   2b \mid (2a+b) \iff 2a+b=2b \iff 2a=b \iff (a,b)=(1,2),
   $$
   which is a contradiction.
   \end{proof}

   \begin{proof}[Proof of Lemma \ref{L:Efixedkbound}]
   Cases (1), (2) and (3) are simple consequences of Lemmas \ref{L:Ehkcase1rewrite} and \ref{L:gjksmallbound} and \ref{L:Sabknonempty}.  For case (4), we suppose $\zeta= \zeta_3$.  Then one finds $j_0(1,3,1,3)=2$ and Lemmas \ref{L:Ehkcase1rewrite} and  \ref{L:gjksmallbound} imply 
   \begin{align*}
 E_{1,3}( \zeta_3,t_{\theta}) & =\Log\left(\prod_{j=1}^3\left(1- \zeta_3 \zeta_3^{j}e^{-jt_{\theta}}\right)^{-\frac{1}{2}+\frac{j}{3}}\right)+\log \left(\Gamma\left(\frac{2}{3}\right)\right)-\frac{1}{6}\log\left(\frac{2}{3}\right)-\frac{1}{2}\log(2\pi)+o(1)\\
 &=\Log\left(t_{\theta}^{\frac{1}{6}}\right)+\Log\left(\frac{(1- \zeta_3)^{\frac12}}{(1- \zeta_3^2)^{\frac16}} \right)+\log\left( \Gamma\left(\frac{1}{3}\right)\right)+\frac{1}{6}\log\left(3\right)-\frac{1}{2}\log(2\pi)+o(1) 
 ,
   \end{align*}
   as claimed, whereas $j_0(1,3,2,3)=1$, and so
   \begin{align*}
       &E_{2,3}(\zeta_b^{a},t_{\theta}) =\log \prod_{j=1}^3\left(1- \zeta_3 \zeta_3^{2j}e^{-jt_{\theta}}\right)^{-\frac{1}{2}+\frac{j}{3}}+\log \Gamma\left(\frac{1}{3}\right) + \frac{1}{6}\log\left(\frac{1}{3}\right) - \frac{1}{2}\log(2 \pi)+o(1) \\
       &=\log\left(t_{\theta}^{-\frac16}\right)+\log\left( \left(1- \zeta_3^2\right)^{\frac16}(1- \zeta_3)^{\frac12} \right)+\log\Gamma\left(\frac{1}{3}\right)+\frac{1}{6}\log\left(\frac{1}{3}\right)  -\frac{1}{2}\log(2\pi)+ o(1),
   \end{align*}
   as claimed.
   \end{proof}

\subsection{Proof of Lemma \ref{L:Elargekbound}} In preparation for the proof of Lemma \ref{L:Elargekbound}, we rewrite $E_{h,k}$ as in Lemma \ref{L:Ehkcase1rewrite}, this time using only the first two terms of $g_{j,k}$.  The proof is analogous.
\begin{lemma}\label{L:Ehkcase2rewrite}
For $\mathrm{Re}(t)>0$ and $z=\zeta_b^{a},$ we have
\begin{equation}\label{E:Ehkcase2rewrite}
E_{h,k}(z,t)=\sum_{\substack{1 \leq m \leq bk \\ 1 \leq j \leq k}}  \zeta_b^{ma} \zeta_k^{jmh}kt\sum_{\ell \geq 0} \widetilde{g}_{j,k}\left(t\left(bk^2 \ell + km\right) \right),\end{equation}
  where
  $$
  \widetilde{g}_{j,k}(w):=\frac{e^{-\frac{j}{k}w}}{w(1-e^{-w})}-\frac{1}{w^2}.
  $$
\end{lemma}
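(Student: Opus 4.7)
The plan is to mirror the proof of Lemma \ref{L:Ehkcase1rewrite} but stop one step earlier, since the product correction $\Log\prod_{j=1}^k(1-z\zeta_k^{-jh}e^{-jt})^{-\frac12+\frac{j}{k}}$ that appeared there arose from adding and subtracting the third summand $(\frac12-\frac{j}{k})\frac{e^{-jw/k}}{w}$ of $g_{j,k}(w)$; here $\widetilde g_{j,k}$ is exactly $g_{j,k}$ without that third piece, so no logarithmic correction is needed.

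First I would expand the Pochhammer logarithm via its Taylor series,
$$\Log\bigl(z\zeta_k^h e^{-t};\zeta_k^h e^{-t}\bigr)_\infty^{-1} = \sum_{\nu \geq 1,\,\ell \geq 1} \frac{z^\nu \zeta_k^{\nu \ell h} e^{-\nu \ell t}}{\nu},$$
which is absolutely convergent for $\mathrm{Re}(t)>0$. I would then reparametrize via $\nu = k\nu'+j$ with $1\leq j\leq k$, $\nu'\geq 0$, and $\ell = bk\ell'+m$ with $1\leq m\leq bk$, $\ell'\geq 0$. Using $z=\zeta_b^a$ together with the periodicities $\zeta_b^{bk\ell'a}=1$ and $\zeta_k^{k(\cdots)}=1$, the mixed exponential collapses to $\zeta_b^{ma}\zeta_k^{mjh}$. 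Summing the inner geometric series in $\nu'$ yields $\frac{e^{-jt(bk\ell'+m)}}{1-e^{-kt(bk\ell'+m)}}$, and with $w := t(bk^2\ell'+km) = kt(bk\ell'+m)$ the Pochhammer piece becomes
$$\sum_{\substack{1\leq m\leq bk \\ 1\leq j\leq k}} \zeta_b^{ma}\zeta_k^{jmh}\,kt\sum_{\ell'\geq 0}\frac{e^{-jw/k}}{w(1-e^{-w})}.$$

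Next I would handle the subtracted $\mathrm{Li}_2(z^k)/(k^2 t)$ exactly as in the proof of Lemma \ref{L:Ehkcase1rewrite}: writing
$$\sum_{\substack{1\leq m\leq bk \\ 1\leq j\leq k}} \zeta_b^{ma}\zeta_k^{jmh}\,kt\sum_{\ell'\geq 0}\frac{1}{w^2},$$
the identity $\sum_{j=1}^k \zeta_k^{jmh} = k\cdot\mathbf{1}_{k\mid m}$ (which uses $\gcd(h,k)=1$) restricts the $m$-sum to multiples of $k$, and the remaining terms reassemble precisely to $\mathrm{Li}_2(z^k)/(k^2 t)$. Subtracting then produces the combination $\frac{e^{-jw/k}}{w(1-e^{-w})} - \frac{1}{w^2} = \widetilde g_{j,k}(w)$ in the inner summand, yielding the claimed identity. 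There is no genuine analytic obstacle; the work is purely bookkeeping of the index substitutions and justification of the double-sum rearrangements, both of which are routine under $\mathrm{Re}(t)>0$.
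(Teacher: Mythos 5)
Your proposal is correct and takes essentially the same route as the paper, which proves this lemma simply by remarking that it follows as in Lemma \ref{L:Ehkcase1rewrite} using only the first two terms of $g_{j,k}$: the log-Pochhammer expansion produces the $\frac{e^{-jw/k}}{w(1-e^{-w})}$ piece and the $\sum_j \zeta_k^{jmh}=k\cdot 1_{k\mid m}$ identity reassembles the subtracted $\frac{1}{w^2}$ piece into $\mathrm{Li}_2(z^k)/(k^2t)$, exactly as you describe. One labeling slip worth fixing: in your displayed Taylor expansion the power of $z$ and the denominator sit on $\nu$, yet you then decompose $\nu=k\nu'+j$ and $\ell=bk\ell'+m$; the roles must be swapped (the index carrying $z^{(\cdot)}$ and $1/(\cdot)$ is the one written as $bk\ell'+m$, the factor index the one written as $k\nu'+j$), after which every subsequent formula you state, including the collapse to $\zeta_b^{ma}\zeta_k^{jmh}$ and the geometric series in $\nu'$, is exactly right.
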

We will need to estimate the sum in \eqref{E:Ehkcase2rewrite} separately for $\ell \geq 1$ and $\ell=0.$  For $\ell \geq 1$, we can first compute the sum on $j$ as
  $$
  \sum_{1\leq j \leq k} \zeta_k^{jmh}\widetilde{g}_{j,k}(z)=\frac{ \zeta_{k}^{mh}e^{-\frac{z}{k}}}{z(1- \zeta_{k}^{mh}e^{-\frac{z}{k}})}-\frac{k}{z^2} \cdot 1_{k \mid m}.
  $$
  Thus, writing $m=\nu k$ with $1 \leq \nu \leq b$ when $k \mid m$, we have
  
  \begin{align*}
     &\sum_{\substack{1 \leq m \leq bk \\ 1 \leq j \leq k}}  \zeta_b^{ma} \zeta_k^{jmh}kt\sum_{\ell \geq 1} \widetilde{g}_{j,k}\left(t(bk^2 \ell + km) \right) \\
     &=t\sum_{\nu=1}^b \zeta_{b}^{\nu ka}\sum_{\ell \geq 1} f_1\left(bkt\left(\ell + \frac{\nu}{b}\right)\right)+t\sum_{\substack{1 \leq m \leq bk \\ k \nmid m}} \zeta_{b}^{ma}\sum_{\ell \geq 1} f_2\left(bkt\left(\ell + \frac{m}{bk}\right)\right) =:S_1+S_2,
  \end{align*}
 say, where
  $$
  f_1(z):=\frac{e^{-z}}{z(1-e^{-z})}-\frac{1}{z^2}
  $$
  and
  $$
  f_2(z):=\frac{ \zeta_{k}^{mh}e^{-z}}{z(1- \zeta_{k}^{mh}e^{-z})}.
  $$
 \begin{lemma}\label{L:S1bound}
   For $k \geq n^{\varepsilon}$, $-\theta_{h,k}' \leq \theta \leq \theta_{h,k}''$ and $t \mapsto t_{\theta}$, we have $|S_1|=O\left(\log(n)\right)$.
 \end{lemma}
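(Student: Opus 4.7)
The plan is to bound $S_1$ termwise, using sharp pointwise estimates on $f_1$ together with the fact that $k \geq n^{\varepsilon}$ and $t_\theta$ lies in a fixed cone with $|t_\theta| \asymp n^{-1/2}$.

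First, I would record pointwise bounds on $f_1$ in any cone $|\Arg z| \leq \pi/2 - \eta$. The Laurent expansion
\[
\frac{e^{-z}}{z(1-e^{-z})} = \frac{1}{z^2} - \frac{1}{2z} + \frac{1}{12} + O(z), \qquad z \to 0,
\]
gives $f_1(z) = -\tfrac{1}{2z} + O(1)$ as $z \to 0$, hence $|f_1(z)| \ll 1/|z|$ for small $|z|$. For $|z| \geq 1$ in the cone, the factor $e^{-z}$ makes the first summand of $f_1$ exponentially small, so $|f_1(z)| \ll 1/|z|^2$. Combining these,
\[
|f_1(z)| \ll \frac{1}{|z|(1+|z|)}
\]
uniformly in the cone.

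Next I would set $w := bk t_\theta$. The mediant bounds $\theta_{h,k}', \theta_{h,k}'' \ll 1/(kN)$ with $N = \lfloor \delta \sqrt{n} \rfloor$, together with $\mathrm{Re}(t_n) > 0$ and $|t_n| \asymp n^{-1/2}$, ensure that $t_\theta$, and therefore $w$, lies in a fixed cone, with $|w| \asymp bk|t_\theta| \gtrsim n^{\varepsilon - 1/2}$. The main step is now estimating the inner sum by splitting at $\ell \asymp 1/|w|$. Using the pointwise bound on $f_1$,
\begin{align*}
\sum_{\ell \geq 1} |f_1(w(\ell + \nu/b))| \ll \sum_{1 \leq \ell \leq 1/|w|} \frac{1}{|w|\ell} + \sum_{\ell > 1/|w|} \frac{1}{(|w|\ell)^2} \ll \frac{1 + \log(1/|w|)}{|w|}
\end{align*}
when $|w| \leq 1$; the first sum is empty when $|w| \geq 1$, giving the better estimate $O(1/|w|^2)$.

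Finally, summing trivially over $\nu \in \{1,\dots,b\}$ and using $b|t_\theta|/|w| = 1/k$ yields, in both regimes,
\[
|S_1| \leq |t_\theta| \sum_{\nu=1}^b \sum_{\ell \geq 1} |f_1(w(\ell+\nu/b))| \ll \frac{1+\log(1/|w|)}{k} \ll \frac{\log n}{n^{\varepsilon}} = O(\log n),
\]
since $\log(1/|w|) \leq \log(\sqrt{n}/(bk)) \ll \log n$. The only mildly delicate point is treating the two regimes $|w| < 1$ and $|w| \geq 1$ uniformly, but this is routine; in particular, no cancellation among the roots of unity $\zeta_b^{\nu k a}$ is required.
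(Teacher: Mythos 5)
Your proof is correct, but it takes a different route from the paper. The paper estimates the inner sum $\sum_{\ell \geq 1} f_1\left(bkt_{\theta}\left(\ell+\tfrac{\nu}{b}\right)\right)$ via classical Euler--Maclaurin summation (Theorem \ref{T:EulerMac}), which requires the boundary value $f_1\left(kt_{\theta}(b+\nu)\right)$, the bound $f_1(z) \ll \tfrac1z$ near $0$, and estimates of $\int f_1$ and $\int f_1'$ along rays in the cone, arriving at $O\!\left(\tfrac{|\log(kt_{\theta})|}{k|t_{\theta}|}\right)$ per inner sum. You instead bound everything termwise, using the uniform cone estimate $|f_1(z)| \ll \tfrac{1}{|z|(1+|z|)}$ (valid since $1-e^{-z}$ is bounded away from $0$ and $e^{-z}$ decays in $|\Arg z| \le \tfrac{\pi}{2}-\eta$) and splitting the $\ell$-sum at $\ell \asymp 1/|w|$ with $w = bkt_\theta$; this avoids Euler--Maclaurin and any estimate of $f_1'$ entirely. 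Both arguments discard the roots of unity $\zeta_b^{\nu k a}$ and use only $b = O(1)$, and both rest on the same facts that $t_\theta$ lies in a fixed cone with $|t_\theta| \asymp n^{-1/2}$ and $k \ge n^{\varepsilon}$. Your version is more elementary and in fact yields the slightly stronger bound $O\!\left(\tfrac{\log n}{k}\right) = O\!\left(n^{-\varepsilon}\log n\right)$, which is more than enough for the lemma; the paper's route gives the stated $O(\log n)$ and keeps the computation in the same Euler--Maclaurin framework used elsewhere in Section 5.
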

 \begin{proof}

  We use Theorem \ref{T:EulerMac} (with $N \to \infty$) to write
  \begin{align*}
  \sum_{\ell \geq 1} f_1\left(bkt_{\theta}\left(\ell + \frac{\nu}{b}\right)\right)&=\frac{f_1\left(bkt_{\theta}\left(1+\frac{\nu}{b}\right)\right)}{2}+\int_1^{\infty}f_1\left(bkt_{\theta}\left(x+\frac{\nu}{b}\right)\right)dx \\& \quad +bkt_{\theta}\int_1^{\infty}f_1'\left(bkt_{\theta}\left(x+\frac{\nu}{b}\right)\right)\left(\{x\}-\frac{1}{2}\right)dx \\
  &=\frac{f_1\left(kt_{\theta}\left(b+\nu\right)\right)}{2}+\frac{1}{bkt_{\theta}}\int_{kt_{\theta}(b+\nu)}^{\infty}f_1\left(z\right)dz \\& \quad +\int_{kt_{\theta}(b+\nu)}^{\infty}f_1'\left(z\right)\left(\left\{\frac{z-\frac{\nu}{b}}{bkt_{\theta}}\right\}-\frac{1}{2}\right)dz.
  \end{align*}
  Here, $f_1(z) \ll \frac{1}{z}$ as $z \to 0,$ thus
  $$
  f_1\left(kt_{\theta}(b+\nu)\right)=O\left(\frac{1}{k|t_{\theta}|} \right).
  $$
  Furthermore $\int_{t\cdot \frac{c}{|t|}}^{t\infty} f_1(z)dz=O(1)$ for $|\Arg(t)|\leq \frac{\pi}{2}-\eta,$ uniformly for any $\eta, c>0.$  As noted before, $t_{\theta}$ lies in such a cone, so
 $$
      \frac{1}{bkt_{\theta}}\int_{kt_{\theta}(b+\nu)}^{\infty}f_1\left(z\right)dz
      =O\left(\frac{1}{k|t_{\theta}|}\int_{kt_{\theta}(b+\nu)}^{t_\theta\frac{2b}{|t_\theta|}}\frac{1}{z}dz\right) =O\left(\frac{|\log(kt_{\theta})|}{k|t_{\theta}|}\right).
  $$
  Similarly, one has
  $$
  f_1'(z)=-\frac{e^{-z}}{z(1-e^{-z})}-\frac{e^{-z}}{z^2(1-e^{-z})}-\frac{e^{-2z}}{z(1-e^{-z})^2}+\frac{2}{z^3},
  $$
  so  $\int_{t\cdot \frac{c}{|t|}}^{t\infty} f'_1(z)dz=O(1)$ for $|\Arg(t)|\leq \frac{\pi}{2}-\eta,$ for any $\eta, c>0.$  And one has $f_1'(z) \ll \frac{1}{z^2}$ as $z \to 0,$ thus
 \begin{align*}
     \int_{kt_{\theta}(b+\nu)}^{\infty}f_1'\left(z\right)\left(\left\{\frac{z-\frac{\nu}{b}}{bkt_{\theta}}\right\}-\frac{1}{2}\right)dz &=O\left(\int_{kt_{\theta}(b+\nu)}^{t_{\theta}\frac{2b}{|t_{\theta}|}} \frac{1}{z^2} dz \right) =O\left(\frac{1}{k|t_{\theta}|}\right).
 \end{align*}
 The above bounds are all clearly uniform in $1 \leq \nu \leq b$, thus overall
 $$
 |S_1| = \sum_{\substack{1 \leq m \leq bk \\ k \nmid m}}|t_{\theta}|O\left(\frac{|\log kt_{\theta}|}{k|t_{\theta}|}\right)=O\left(|\log (kt_{\theta})|\right)=O(\log n),
 $$
as claimed.
  \end{proof}
\begin{lemma}\label{L:S2bound}
  For $k \geq n^{\varepsilon}$, $-\theta_{h,k}' \leq \theta \leq \theta_{h,k}''$ and $t \mapsto t_{\theta}$, we have $|S_2|=O\left(n^{\frac12-\varepsilon}\right)$.
\end{lemma}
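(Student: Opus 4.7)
The plan mirrors the strategy of Lemma \ref{L:S1bound} but must additionally handle the root of unity $\alpha=\zeta_k^{mh}$ hidden in $f_2$. Instead of applying Euler--Maclaurin directly (which becomes delicate because the pole structure of $f_2$ depends on $\alpha$), I would expand $f_2$ into a geometric series. Since $\mathrm{Re}(t_\theta)>0$ forces $|\zeta_k^{mh}e^{-w}|<1$, one has $f_2(w)=\sum_{\nu\geq 1}\zeta_k^{\nu mh}e^{-\nu w}/w$. Substituting into $S_2$, swapping the orders of summation (justified by absolute convergence, using $\mathrm{Re}(t_\theta)\asymp 1/\sqrt n$), and collapsing the $(\ell,m)$ pair into a single index $\widetilde m:=bk\ell+m\geq bk+1$ with $k\nmid\widetilde m$, I obtain
\begin{equation*}
S_2=\sum_{\nu\geq 1}T_\nu,\qquad T_\nu:=\sum_{\substack{\widetilde m\geq bk+1\\ k\nmid\widetilde m}}\frac{e^{2\pi i\widetilde m\gamma_\nu-\nu t_\theta\widetilde m}}{\widetilde m},\qquad \gamma_\nu:=\frac{ak+b\nu h}{bk}.
\end{equation*}

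Next, I would estimate each $T_\nu$ by Abel partial summation (Proposition \ref{P:Abelpartialsummation}) with $a_{\widetilde m}=\frac{e^{2\pi i\widetilde m\gamma_\nu}}{\widetilde m}\mathbf{1}_{k\nmid\widetilde m}$ and $b_{\widetilde m}=e^{-\nu t_\theta\widetilde m}$. The partial sums $C_n$ split as
$$\sum_{\substack{1\leq j\leq M\\ k\nmid j}}\frac{e^{2\pi ij\gamma_\nu}}{j}=G_M(\gamma_\nu)-\frac{1}{k}G_{\lfloor M/k\rfloor}(k\gamma_\nu),$$
and the crucial point is that $k\gamma_\nu=a+\nu h\in\mathbb Z$, so $G_{\lfloor M/k\rfloor}(k\gamma_\nu)=H_{\lfloor M/k\rfloor}$. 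Since $\gamma_\nu$ is rational with denominator dividing $bk$, one has $\|\gamma_\nu\|\geq 1/(bk)$ whenever $\gamma_\nu\notin\mathbb Z$, and Lemma \ref{L:CosSumBound} gives $|G_M(\gamma_\nu)|\ll\log(bk)=O(\log n)$; in the remaining case $\gamma_\nu\in\mathbb Z$ one has $G_M(\gamma_\nu)=H_M=O(\log n)$ directly. Thus $\sup_n|C_n|=O(\log n)$ uniformly in $\nu$.

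For the weight factor, $\sum_{n>bk}|b_n-b_{n+1}|=|1-e^{-\nu t_\theta}|\frac{e^{-\nu t_n(bk+1)}}{1-e^{-\nu t_n}}$, where the ratio $|1-e^{-\nu t_\theta}|/(1-e^{-\nu t_n})$ is $O(1)$ uniformly because $|t_\theta|\asymp t_n$ in the present range of $\theta$. This yields $|T_\nu|=O(\log n\cdot e^{-\nu t_n bk})$, and summing the geometric series in $\nu$,
\begin{equation*}
|S_2|\leq\sum_{\nu\geq 1}|T_\nu|\ll\frac{\log n}{1-e^{-t_n bk}}\ll\frac{\log n}{t_n bk}\ll\frac{\sqrt n\,\log n}{bk}\ll n^{\frac12-\varepsilon},
\end{equation*}
where in the last step we use $k\geq n^\varepsilon$ and absorb the logarithmic factor (equivalently, replace $\varepsilon$ by a slightly smaller value).

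The main obstacle is securing the uniform bound $\sup_n|C_n|=O(\log n)$ independently of how close $\gamma_\nu$ gets to an integer; this is precisely where one exploits the algebraic coincidence that $k\gamma_\nu\in\mathbb Z$, reducing the ``$k\nmid\cdot$'' restriction to a harmonic-number correction rather than another potentially singular $G_M$-evaluation. A subtler alternative (potentially removing the $\log n$) would be to invoke Lemma \ref{L:Gmaxbound} by grouping $\nu$ modulo $k$, using that only $k$ distinct values of $\gamma_\nu$ arise, but the cruder estimate above already suffices for the stated bound.
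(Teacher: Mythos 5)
Your reduction of $S_2$ to the sums $T_\nu$ (geometric expansion of $f_2$, reindexing by $\widetilde m=bk\ell+m$) is correct, but the step you yourself call crucial fails as written. First, the algebra is off: $k\gamma_\nu=\frac{ak}{b}+\nu h$, not $a+\nu h$, so $k\gamma_\nu\in\mathbb{Z}$ only when $b\mid k$; when $b\nmid k$ it lies at distance at least $\frac1b$ from $\mathbb{Z}$ and Lemma \ref{L:CosSumBound} gives an $O(1)$ bound, so that case is harmless — but your stated identity is simply false in general. Second, and more seriously, the $\widetilde m$-sum in $T_\nu$ is infinite, so the Abel partial sums $C_M$ must be controlled for \emph{all} $M$, and they are not $O(\log n)$: when $b\mid k$ the term $\frac1k G_{\lfloor M/k\rfloor}(k\gamma_\nu)=\frac1k H_{\lfloor M/k\rfloor}$ grows like $\frac{\log M}{k}$, and in the genuinely occurring exceptional case $\gamma_\nu\in\mathbb{Z}$ (e.g.\ $a=1$, $b=5$, $h=1$, $k=10$, $\nu\equiv 8\pmod{10}$) the whole partial sum is $\sum_{bk<j\leq M,\,k\nmid j}\frac1j\sim(1-\frac1k)\log\frac{M}{bk}$, unbounded in $M$; writing ``$H_M=O(\log n)$'' conflates $M$ with $n$. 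The argument is repairable — since $|b_M-b_{M+1}|\ll \nu|t_\theta|e^{-\nu \mathrm{Re}(t_\theta)M}$ decays exponentially, carrying the true bound $|C_M|\ll \log n+\log M$ through the weighted sum still yields $|T_\nu|\ll \log n\, e^{-c\nu \mathrm{Re}(t_\theta)bk}$ — but as written the central claim is wrong, and even after repair you only get $O(n^{\frac12-\varepsilon}\log n)$, which is weaker than the stated lemma (though sufficient for its use inside Lemma \ref{L:Elargekbound}). A minor further point: $t_n$ is complex in cases (1)--(3), so the quantities $1-e^{-\nu t_n}$ and $e^{-\nu t_n(bk+1)}$ in your weight estimate should be $1-e^{-\nu\mathrm{Re}(t_\theta)}$ and $e^{-\nu\mathrm{Re}(t_\theta)(bk+1)}$.

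Beyond the errors, the machinery is unnecessary here: unlike the $\ell=0$ contribution treated in Lemma \ref{L:l=0rewriteterm1}, no cancellation at all is needed for $S_2$. The paper's proof bounds each term by absolute values, $|f_2(z)|\leq \frac{e^{-\mathrm{Re}(z)}}{\mathrm{Re}(z)\left(1-e^{-\mathrm{Re}(z)}\right)}=:\tilde f_2(\mathrm{Re}(z))$, uses that $\tilde f_2$ is decreasing together with an integral comparison, and obtains
\begin{align*}
|S_2|\ \ll\ \frac{|t_\theta|}{\mathrm{Re}(t_\theta)}\left(\int_{bk\,\mathrm{Re}(t_\theta)}^{\infty}\tilde f_2(x)\,dx+\frac{1}{bk\,\mathrm{Re}(t_\theta)}\right)\ \ll\ \frac{1}{k\,\mathrm{Re}(t_\theta)}\ \ll\ \frac{\sqrt n}{k}\ \ll\ n^{\frac12-\varepsilon},
\end{align*}
with no logarithmic loss and no case analysis on $\gamma_\nu$. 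You should either adopt this simpler route or fix the partial-sum bound as indicated above.
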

\begin{proof}
For $\mathrm{Re}(z)>0$, we see immediately that
  $$
  |f_2(z)|\leq \frac{e^{-\mathrm{Re}(z)}}{\mathrm{Re}(z)(1-e^{-\mathrm{Re}(z)})}=:\tilde{f_2}(\mathrm{Re}(z)). 
  $$
  Thus, since $\tilde{f_2}$ is decreasing, we have 
  $$
  |S_2| \leq |t_{\theta}|bk \sum_{\ell \geq 1} \tilde{f_2}\left(bk\mathrm{Re}(t_{\theta})\ell\right) 
  \leq \frac{|t_{\theta}|}{\mathrm{Re}(t_{\theta})}\left(\int_{bk\mathrm{Re}(t_{\theta})}^{\infty} \tilde{f_2}(x)dx + bk\mathrm{Re}(t_{\theta}) \tilde{f_2}(bk\mathrm{Re}(t_{\theta}))  \right),
  $$
  by integral comparison.  We can bound the right term as
  $$
 bk\mathrm{Re}(t_{\theta}) \tilde{f_2}(bk\mathrm{Re}(t_{\theta}))=\frac{1}{e^{bk\mathrm{Re}(t_{\theta})}-1}\leq \frac{1}{bk\mathrm{Re}(t_{\theta})}.
  $$
  Furthermore, as $\eta \to 0^+$, we have
  $$
  \int_{\eta}^{\infty} \tilde{f_2}(x)dx = O(1)+\int_{\eta}^1 \frac{e^{-x}}{x(1-e^{-x})}dx \leq O(1)+\int_{\eta}^1 \frac{1}{x^2}dx=O\left(\frac{1}{\eta}\right).
  $$
  Hence, overall,
  $$
  |S_2| = O\left(\frac{1}{k\mathrm{Re}(t_{\theta})}\right)=O\left(\frac{\sqrt{n}}{k}\right)=O\left(n^{\frac12-\varepsilon} \right),
  $$
  as claimed.
\end{proof}

It remains to estimate the double sum \eqref{E:Ehkcase2rewrite} for the term $\ell=0;$ i.e., 
\begin{equation*}
    \sum_{\substack{1 \leq m \leq bk \\ 1 \leq j \leq k}} \zeta_{b}^{ma} \zeta_k^{jmh} \frac{\phi_{\frac{j}{k}}(tkm)}{m}, 
\end{equation*}
where
\begin{align*} 
\phi_a(w):=\frac{e^{-aw}}{1-e^{-w}}- \frac{1}{w}.
\end{align*}
Note that $\phi_a$ is holomorphic at 0 and in the cone $|\Arg(w)| \leq \frac{\pi}{2}- \eta,$ for any $\eta>0.$  We apply Lemma \ref{SmallRadiusEstimate} to $\phi_a$ to bound differences as follows.

\begin{lemma} \label{SmallRadiusEstimate} Let $x$ be a complex number with positive imaginary part and $|x| \leq 1$. Then there is a constant $c > 0$ independent from $a$ and $x$, such that for all $m \leq \frac{1}{|x|}$ we have
\begin{align*}
\left| \phi_a(xm) - \phi_a(x(m+1))\right| \leq c|x|.
\end{align*}
\end{lemma}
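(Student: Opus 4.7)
The plan is to apply Lemma \ref{L:DifferenceEstimate} directly to $\phi_a$ on a disk of fixed radius centered at $xm$, exploiting the fact that the non-removable singularities of $\phi_a$ are bounded away from a neighborhood of the origin.

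First, I would verify the analytic structure of $\phi_a$. The first term $\frac{e^{-aw}}{1-e^{-w}}$ has simple poles precisely at the zeros of $1-e^{-w}$, namely at $w = 2\pi i k$ for $k \in \mathbb{Z}$. A short Laurent expansion gives $\frac{e^{-aw}}{1-e^{-w}} = \frac{1}{w} + \bigl(\frac{1}{2}-a\bigr) + O(w)$ near zero, so the subtraction of $\frac{1}{w}$ cancels the pole at $w=0$ and $\phi_a$ extends holomorphically across the origin. Consequently, all non-removable singularities of $\phi_a$ lie in $\{|w| \geq 2\pi\}$.

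Second, I would fix the radius $r = 2$ and apply Lemma \ref{L:DifferenceEstimate} with center $c = xm$. Since $m \leq \frac{1}{|x|}$, we have $|xm| \leq 1$, so $\overline{B_r(xm)} \subset \{|w| \leq 3\}$; in particular every pole $2\pi i k$ with $k \neq 0$ lies at distance at least $2\pi - 3 > 0$ from this closed disk, so $\phi_a$ is holomorphic on a neighborhood of $\overline{B_r(xm)}$. Moreover, $|x(m+1) - xm| = |x| \leq 1 < 2 = r$, so $x(m+1) \in B_r(xm)$. Lemma \ref{L:DifferenceEstimate} then yields
\[
\bigl|\phi_a(x(m+1)) - \phi_a(xm)\bigr| \leq \max_{|z - xm| = 2} |\phi_a'(z)| \cdot |x|.
\]

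The remaining step is to bound the maximum uniformly. For $a$ in the bounded range inherited from the application (where $a = j/k$ with $1 \leq j \leq k$, so $a \in (0,1]$), the map $(a,z) \mapsto \phi_a'(z)$ is continuous on the compact set $[0,1] \times \{|z| \leq 3\}$ (which contains only the removable singularity at $z=0$), and is therefore bounded by some constant $c > 0$. Since every circle $\{|z - xm| = 2\}$ with $|xm| \leq 1$ is contained in this compact set, the claim follows with this same $c$. There is no genuine obstacle; the argument reduces to the clean geometric fact that the poles of $\phi_a$ are at distance at least $2\pi$ from zero, leaving room for a disk of fixed radius $r=2$ that simultaneously contains $xm$ and $x(m+1)$ and remains in the holomorphic domain.
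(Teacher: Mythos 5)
Your proposal is correct and follows essentially the same route as the paper: both reduce the claim to Lemma \ref{L:DifferenceEstimate} applied to $\phi_a$ on a fixed disk of radius $O(1)$ containing $xm$ and $x(m+1)$, together with a bound on $\phi_a'$ that is uniform in $a\in(0,1]$. The only difference is bookkeeping: the paper centers the disk at $0$ (radius $\tfrac52$ inside $B_3(0)$) and gets the uniform bound on $\phi_a'$ explicitly via the maximum modulus principle on $|z|=\tfrac52$, whereas you center at $xm$ and invoke joint continuity of $(a,z)\mapsto\phi_a'(z)$ across the removable singularity on a compact set, which is fine but would merit a one-line justification (e.g.\ writing $\phi_a(w)=\frac{g(a,w)-1}{w}$ with $g(a,w)=\frac{we^{-aw}}{1-e^{-w}}$ jointly analytic for $|w|<2\pi$).
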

\begin{proof} The function $\phi_a(z)$ is holomorphic in $B_3(0)$. The functions $\phi_a'(z)$ are uniformly bounded on $\overline{B_{\frac52}(0)} \subset B_3(0)$. Indeed, we have uniformly in $a$
\begin{align*}
    & \max_{|z| \leq \frac52} |\phi'_a(z)| = \max_{|z| = \frac{5}{2}} |\phi'_a(z)| \leq \max_{|z| = \frac52} \left| \frac{e^{-az-z}}{(1-e^{-z})^2}\right| + \max_{|z| = \frac52} \left| \frac{ae^{-az}}{1-e^{-z}}\right| + \max_{|z| = \frac52} \left| \frac{1}{z^2}\right| \ll 1.
\end{align*} 
On the other hand, by Lemma \ref{L:DifferenceEstimate} applied to $f = \phi_a$, $U = B_3(0)$, and $\overline{B_{\frac52}(0)} \subset U$, we find, since $|xm| \leq 1$ and $|x(m+1)| \leq |mx| + |x| \leq 2$
\begin{align*}
|\phi_a(xm+x) - \phi_a(mx)| \leq \max_{|z| \leq \frac52} |\phi'_a(z)| |xm + x - xm| = c|x|,    \end{align*}
where $c$ does not depend on $0 < a \leq 1$. \end{proof}
We also require the following lemma for large values of $m$, whose proof is a straightforward calculation using that the denominators of the first term in $\phi_a(w)$ are bounded away from 0. 
\begin{lemma} \label{LargeRadiusEstimate} Let $x$ be a complex number with positive imaginary part. Then all $m > \frac{1}{|x|}$ we have
\begin{align*}
\left| \phi_a(xm) - \phi_a(x(m+1))\right| \ll \frac{1}{|x|m(m+1)} + |x|e^{-m\mathrm{Re}(x)} + a|x| e^{-am\mathrm{Re}(x)}.
\end{align*}
\end{lemma}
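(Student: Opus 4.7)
The plan is to split $\phi_a(w) = \frac{e^{-aw}}{1-e^{-w}} - \frac{1}{w}$ and handle the two pieces independently. The $-\frac{1}{w}$ contribution is explicit:
$$
\frac{1}{x(m+1)} - \frac{1}{xm} = -\frac{1}{xm(m+1)},
$$
of modulus $\frac{1}{|x|m(m+1)}$, reproducing the first summand of the asserted bound.

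For the main piece $\frac{e^{-aw}}{1-e^{-w}}$, I would expand geometrically. The hypothesis on $x$ (in the applications, $x = t_\theta b k^2$ lies in a fixed cone $|\Arg(x)| \leq \tfrac{\pi}{2}-\eta$, so in particular $\mathrm{Re}(x) > 0$) makes $\frac{e^{-aw}}{1-e^{-w}} = \sum_{n \geq 0} e^{-(a+n)w}$ valid on the relevant arguments, and termwise subtraction gives
$$
\frac{e^{-axm}}{1-e^{-xm}} - \frac{e^{-ax(m+1)}}{1-e^{-x(m+1)}} = \sum_{n \geq 0} e^{-(a+n)xm}\bigl(1 - e^{-(a+n)x}\bigr).
$$
The elementary inequality $|1-e^{-z}| \leq |z|$ for $\mathrm{Re}(z) \geq 0$ yields $|1-e^{-(a+n)x}| \leq (a+n)|x|$. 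Isolating $n=0$ produces exactly $a|x| e^{-am\mathrm{Re}(x)}$, the last summand of the bound. The tail $n \geq 1$ is controlled by
$$
|x| \, e^{-am\mathrm{Re}(x)} \sum_{n \geq 1}(a+n) e^{-nm\mathrm{Re}(x)} \ll \frac{|x| \, e^{-m\mathrm{Re}(x)}}{(1-e^{-m\mathrm{Re}(x)})^2},
$$
uniformly in $a \in [0,1]$.

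The only delicate point is keeping $(1-e^{-m\mathrm{Re}(x)})^{-2}$ under control, since without a lower bound on $m\mathrm{Re}(x)$ the tail estimate would degenerate. This is precisely the role of the hypothesis $m > 1/|x|$: combined with the cone condition it gives $m\mathrm{Re}(x) \geq m|x|\sin(\eta) \geq \sin(\eta) > 0$, so $(1-e^{-m\mathrm{Re}(x)})^{-2}$ is bounded by an absolute constant and the tail contributes the remaining $|x| e^{-m\mathrm{Re}(x)}$, completing the estimate. Beyond this geometric observation, no ingredients other than the Taylor series and the linear bound $|1-e^{-z}| \leq |z|$ enter the argument, so the verification is a routine computation once the three sources of the bound (pole subtraction, $n=0$ term, tail $n \geq 1$) are identified.
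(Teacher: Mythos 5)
Your proposal is correct, and it is essentially the calculation the paper has in mind: the paper omits the proof entirely, remarking only that it is ``a straightforward calculation using that the denominators of the first term in $\phi_a(w)$ are bounded away from 0,'' and your geometric-series telescoping $\frac{e^{-axm}}{1-e^{-xm}}-\frac{e^{-ax(m+1)}}{1-e^{-x(m+1)}}=\sum_{n\ge 0}e^{-(a+n)xm}\bigl(1-e^{-(a+n)x}\bigr)$ together with $|1-e^{-z}|\le |z|$ is exactly that computation, with the bounded-denominator point appearing as your bound on $(1-e^{-m\mathrm{Re}(x)})^{-2}$. You also correctly identify the one subtlety the paper glosses over: as literally stated (the hypothesis ``positive imaginary part'' should read ``positive real part,'' and no cone is mentioned) the estimate would fail near the poles of $\phi_a$, and it is the combination of $m>1/|x|$ with the fixed cone $|\Arg(x)|\le \frac{\pi}{2}-\eta$ from the application (there $x=kt_\theta$) that keeps $m\,\mathrm{Re}(x)\ge \sin(\eta)$ and makes the implied constant uniform.
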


The following lemma, when combined with Lemmas \ref{L:Ehkcase2rewrite}--\ref{L:S2bound}, completes the proof of Lemma \ref{L:Elargekbound}, and thus that of Theorem \ref{T:twistedetaproduct}.
\begin{lemma}\label{L:l=0rewriteterm1}
For $ n^{\varepsilon} \leq k \leq N$ and $-\theta_{h,k}' \leq \theta \leq \theta_{h,k}''$, we have
$$
\sum_{\substack{1 \leq m \leq bk \\ 1 \leq j \leq k}}\zeta_b^{ma}\zeta_{k}^{jmh}\frac{\phi_{\frac{j}{k}}(t_{\theta}km)}{m}=O(k).
$$
\end{lemma}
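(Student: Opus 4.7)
The plan is to handle each $j$-summand separately by applying Abel partial summation on $m$, and then to sum the resulting $j$-contributions using Lemma \ref{L:Gmaxbound}. Setting $\alpha_j := \frac{a}{b} + \frac{jh}{k}$, observe that $\zeta_b^{ma}\zeta_k^{jmh} = e^{2\pi i m \alpha_j}$, so the partial sum $A_m^{(j)} := \sum_{n=1}^{m} \frac{\zeta_b^{na}\zeta_k^{jnh}}{n}$ coincides with $G_m(\alpha_j)$ from Lemma \ref{L:CosSumBound}. Writing $b_m := \phi_{j/k}(t_\theta k m)$ and applying Proposition \ref{P:Abelpartialsummation} yields
$$
\sum_{m=1}^{bk} \frac{\zeta_b^{ma}\zeta_k^{jmh}}{m}\phi_{j/k}(t_\theta k m) = A_{bk}^{(j)}\,b_{bk} + \sum_{m=1}^{bk-1} A_m^{(j)}\bigl(b_m - b_{m+1}\bigr).
$$
Since $\phi_a$ is holomorphic at $0$ with $\phi_a(w)\to \tfrac{1}{2}-a$, and decays as $\mathrm{Re}(w)\to\infty$, it is uniformly bounded for $a\in[0,1]$ in the fixed cone $|\arg w|\leq \tfrac{\pi}{2}-\eta$ in which $t_\theta k$ lies; in particular $|b_{bk}|=O(1)$.

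The next step is to control the total variation $V_j := \sum_{m=1}^{bk-1} |b_m - b_{m+1}|$ using Lemmas \ref{SmallRadiusEstimate} and \ref{LargeRadiusEstimate} with $x = t_\theta k$. Note that $|t_\theta k| \ll 1$, since $|t_\theta| \asymp n^{-1/2}$ and $k\leq N\leq \delta\sqrt{n}$. I would split at $M_0 := \lfloor 1/|t_\theta k|\rfloor$. For $m\leq M_0$, Lemma \ref{SmallRadiusEstimate} gives $|b_m - b_{m+1}| \ll |t_\theta k|$, so this block contributes $O(1)$. For $m>M_0$, Lemma \ref{LargeRadiusEstimate} produces three pieces: the term $\sum_{m>M_0}\frac{1}{|t_\theta k|m(m+1)}$ telescopes to $O(1)$, and each of the two geometric sums contributes $O(|t_\theta k|/\mathrm{Re}(t_\theta k)) = O(1)$, again because $t_\theta$ lies in a fixed cone. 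Hence $V_j = O(1)$ uniformly in $j$, and the $j$-th sum is bounded by $O\bigl(\max_{m\geq 1}|G_m(\alpha_j)|\bigr)$.

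The main obstacle is that, while Lemma \ref{L:CosSumBound} bounds $\max_{m}|G_m(\alpha_j)|$ when $\alpha_j\notin\mathbb{Z}$, there is at most one \emph{resonant} index $j_0\in[1,k]$ with $ak + bj_0h\equiv 0\pmod{bk}$, and for this index $A_m^{(j_0)} = H_m$ grows like $\log m$. For $j\neq j_0$, summing $\max_m|G_m(\alpha_j)|$ over $j$ via Lemma \ref{L:Gmaxbound} produces $O(k)$, which handles the bulk of the sum. For the $j_0$-term, I would re-run the Abel summation using the same split at $M_0$: the boundary term is bounded by $H_{bk}\cdot O(1) = O(\log k)$; the small-$m$ block by $\sum_{m\leq M_0} H_m \cdot c|t_\theta k| \ll M_0 H_{M_0}|t_\theta k| = O(\log(1/|t_\theta k|)) = O(\log n)$; and the large-$m$ block by $H_{bk}\cdot O(1) = O(\log k)$. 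Thus the $j_0$-contribution is $O(\log n)$, and the hypothesis $k\geq n^\varepsilon$ gives $\log n \leq \varepsilon^{-1}\log k = O(k)$. Combining the two parts yields the claimed bound $O(k)$.
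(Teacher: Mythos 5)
Your proposal is correct and follows essentially the same route as the paper's proof: Abel partial summation in $m$ with partial sums $G_m\left(\frac{a}{b}+\frac{hj}{k}\right)$, a split at $m \approx \frac{1}{|kt_\theta|}$ handled by Lemmas \ref{SmallRadiusEstimate} and \ref{LargeRadiusEstimate}, cone estimates giving $O(1)$ total variation, Lemma \ref{L:Gmaxbound} to sum over $j$, and a separate harmonic-number treatment of the single resonant index $j_0$. The only cosmetic differences (bounding the boundary term via uniform boundedness of $\phi_a$ on the cone, and getting $O(\log n)$ rather than $O(\log k)$ for the $j_0$-term) do not change the argument.
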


\begin{proof} Let $x := kt_{\theta}$ and $a := \frac{j}{k}$. Note that we have $0 < a \leq 1$,  $\mathrm{Re}(x) > 0$, and $\frac{|x|}{\mathrm{Re}(x)} \ll 1$ uniformly in $k$.  We use Abel partial summation and split the sum into two parts:
    \begin{align*}
        \sum_{\substack{1 \leq m \leq bk \\ 1 \leq j \leq k}} = \sum_{j=1}^k \left( \sum_{0 < m \leq \min\left\{bk, \frac{1}{|x|}\right\}} + \sum_{\min\left\{bk, \frac{1}{|x|}\right\} < m \leq bk}\right).
    \end{align*}
    In the case $|x| > 1$, the first sum is empty, so we can assume $|x| \leq 1$. We first find with Proposition \ref{P:Abelpartialsummation} that
    \begin{align*}
         \sum_{m \leq \min\left\{bk, \frac{1}{|x|}\right\}}  \zeta_{bk}^{m(ak+hjb)}\frac{1}{m}\phi_a(xm) &= G_{\min\left\{bk, \frac{1}{|x|}\right\}}\left( \frac{a}{b} + \frac{hj}{k} \right) \phi_a\left( x \min\left\{bk, \Big\lfloor \frac{1}{|x|} \Big\rfloor \right\} \right) \\
        & + \sum_{m \leq \min\left\{bk, \frac{1}{|x|}\right\} - 1} G_{m}\left( \frac{a}{b} + \frac{hj}{k} \right) \left( \phi_a(mx) - \phi_a((m+1)x)\right).
    \end{align*}
    It follows that with Lemma \ref{SmallRadiusEstimate}
    \begin{align*}
    & \left| \sum_{j=1}^k \sum_{m \leq \min\left\{bk, \frac{1}{|x|}\right\}} \zeta_b^{ma}\zeta_k^{mhj}\cdot \frac{1}{m}\phi_a(mx) \right|  \leq \sum_{j=1}^k \left|G_{\min\left\{bk, \frac{1}{|x|}\right\}}\left( \frac{a}{b} + \frac{hj}{k} \right)\right| \left| \phi_a\left( x \min\left\{bk, \Big\lfloor\frac{1}{|x|}\Big\rfloor \right\}  \right) \right| \\
    & \hspace{3cm} + \left| \sum_{j=1}^k \sum_{0 < m \leq \min\left\{bk, \frac{1}{|x|}\right\}} G_{m}\left( \frac{a}{b} + \frac{hj}{k} \right) \left( \phi_a(mx) - \phi_a((m+1)x)\right) \right| \\
    & \ll \sum_{j=1}^k \left|G_{\min\left\{bk, \frac{1}{|x|}\right\}}\left( \frac{a}{b} + \frac{hj}{k} \right)\right| + \sum_{j=1}^k \max_{m=1,..., \min\left\{bk, \frac{1}{|x|}\right\}} \left|G_{m}\left( \frac{a}{b} + \frac{hj}{k} \right)\right| \sum_{0 < m \leq \frac{1}{|x|}} |x| = O(k),
    \end{align*}
    where we used Lemma \ref{L:Gmaxbound} and $G_{bk}(1) = H_{bk} = O(\log(k))$ in the last step. Similarly, we find with Lemma \ref{LargeRadiusEstimate} (without loss of generality we assume $\frac{1}{|x|} < bk$)
    \begin{align*}
        & \left| \sum_{j=1}^k \sum_{\frac{1}{|x|} < m \leq bk} \zeta_b^{ma}\zeta_{k}^{mhj}\cdot \frac{1}{m}\phi_a(mx) \right| \\
        & \ll \sum_{j=1}^k \left| G_{bk}\left( \frac{a}{b} + \frac{hj}{k}\right) \right| \left| \phi_a\left( x bk \right) \right| + \left| \sum_{j=1}^k \sum_{\frac{1}{|x|} < m \leq bk} G_{m}\left( \frac{a}{b} + \frac{hj}{k} \right) \left( \phi_a(mx) - \phi_a((m+1)x)\right) \right| \\
        & \ll O(k) + \sum_{j=1}^k \max_{m=\frac{1}{|x|},..., bk} \left|G_{m}\left( \frac{a}{b} + \frac{hj}{k} \right)\right| \sum_{\frac{1}{|x|} < m \leq bk} \left( \frac{1}{|x|m(m+1)} + |x| e^{-m \mathrm{Re}(x)} + a|x|e^{-am\mathrm{Re}(x)} \right). 
    \end{align*}
    Note that we uniformly have $\phi_a(xbk) \ll 1$ (as $1 \ll |xbk|$ and $x$ is part of a fixed cone \newline $|\Arg(x)|\leq \frac{\pi}{2}-\eta$) as well as 
    \begin{align*}
    \sum_{\frac{1}{|x|} < m \leq bk} \frac{1}{|x|m(m+1)} & \leq \sum_{\frac{1}{|x|} < m < \infty} \frac{1}{|x|m(m+1)} \ll 1
    \end{align*}
    and
    \begin{align*}
    \sum_{\frac{1}{|x|} < m \leq bk} |x| e^{-m \mathrm{Re}(x)} & \leq \frac{|x|}{1-e^{-\mathrm{Re}(x)}} \ll 1,
    \end{align*}
     as $\frac{|x|}{\mathrm{Re}(x)} \ll 1, |x| \ll 1.$ Similarly,
     
    \begin{align*}
    \sum_{\frac{1}{|x|} < m \leq bk} a|x| e^{-am \mathrm{Re}(x)} & \ll 1.
    \end{align*}
    As a result, using Lemma \ref{L:Gmaxbound} (again up to at most one summand in $O(\log(k))$),
        \begin{align*}
        \left| \sum_{j=1}^k \sum_{\frac{1}{|x|} < m \leq bk} \zeta_b^{ma}\zeta_{k}^{mhj}\cdot \frac{1}{m}\phi_a(mx) \right| & \ll O(k) + \sum_{j=1}^k \max_{m=\frac{1}{|x|},..., bk} \left|G_{m}\left( \frac{a}{b} + \frac{hj}{k} \right)\right| = O(k),
        \end{align*}
       as claimed. 
    \end{proof}
    
    \section*{Data Availability Statement} Data sharing not applicable to this article as no data sets
were generated or analysed during the current study.

\section*{Conflict of Interest Statement} There are no conflicts of interest for the current study.

\end{document}